\newcommand{\Prob}{\mathrm{Prob}}
\newcommand{\rE}{\mathrm{E}}
\newcommand{\VaR}{\mathrm{VaR}}
\newcommand{\cA}{\mathcal{A}}
\newcommand{\cP}{\mathcal{P}}
\newcommand{\cC}{\mathcal{C}}
\newcommand{\cN}{\mathcal{N}}
\newcommand{\cX}{\mathcal{X}}
\newcommand{\cY}{\mathcal{Y}}
\newcommand{\cZ}{\mathcal{Z}}
\newcommand{\bR}{\mathbf{R}}
\theoremstyle{plain}
\newtheorem{thm}{Theorem}
\newtheorem{lem}[thm]{Lemma}
\newtheorem{prop}[thm]{Proposition}
\newtheorem{fact}[thm]{Fact}
\theoremstyle{definition}
\newtheorem{df}[thm]{Definition}
\newtheorem{ex}[thm]{Example}
\newtheorem{prob}[thm]{Problem}
\newtheorem{assump}[thm]{Assumption}
\newtheorem{rmk}[thm]{Remark}
\title{Tightly Robust Optimization via Empirical Domain Reduction}
\author{%
  Akihiro Yabe\\
  NEC corporation\\
  \texttt{a\_yabe@nec.com} \\
  \and
  Takanori Maehara\\
  RIKEN AIP\\
  \texttt{takanori.maehara@riken.jp} \\
}
\date{}
\begin{document}

\maketitle

\begin{abstract}
	Data-driven decision-making is performed by solving a parameterized optimization problem, and the optimal decision is given by an optimal solution for unknown true parameters.
	We often need a solution that satisfies true constraints even though these are unknown.
	Robust optimization is employed to obtain such a solution, where the uncertainty of the parameter is represented by an ellipsoid, and the scale of robustness is controlled by a coefficient.
	In this study, we propose an algorithm to determine the scale such that the solution has a good objective value and satisfies the true constraints with a given confidence probability.
	Under some regularity conditions, the scale obtained by our algorithm is asymptotically $O(1/\sqrt{n})$, whereas the scale obtained by a standard approach is $O(\sqrt{d/n})$.
	This means that our algorithm is less affected by the dimensionality of the parameters.
\end{abstract}

\section{Introduction}\label{secIntro}
\subsection{Problem Setting}
Data-driven decision-making must manage stochastic uncertainty due to a limited number of available data samples.
Suppose that a decision is made by solving the following parameterized optimization problem~\cite{bertsimas2011theory}:
\begin{align}
	\begin{array}{ll}
		& \displaystyle \min_{x \in \cX} f(x) \\
		& \text{s. t. } g_k (x, \theta) \geq 0, \quad k=1,\dots,K
	\end{array} \label{nominalOpt} 
\end{align}
where $\theta \in \bR^d$ is a parameter, $\cX \subseteq \bR^m$ is a decision domain, $x \in \cX$ is a decision variable, $f: \bR^m \to \bR$ is an objective function, and $g_1, \dots, g_K: \bR^m \times \bR^d \to \bR$ are parameterized constraint functions that are linear in $\theta$.
The ideal decision is given by an optimal solution $x(\theta^*)$ under true parameter $\theta^*$;
however, $\theta^*$ is unknown to us and must be estimated from i.i.d. samples $\theta^i$ ($i = 1, \dots, n$) from a distribution $P^*$ whose expectation is $\theta^*$.

In practice, we often need a solution that satisfies the true constraints (i.e., constraints with true parameters) even though these are unknown.
In such a case, a naive approach that solves the problem with estimated parameters would be inadequate because the obtained solution might not satisfy the true constraints.
Rather, robust optimization~\cite{ben2009robust,bertsimas2011theory} can be used as follows.
For a positive semidefinite matrix $\Sigma$, we define $U_{\Sigma} := \{ u \in \bR^d \mid u^{\top} \Sigma^{-1} u \leq 1 \}$.
The \emph{robust optimization problem (with ellipsoidal uncertainty)} is then given by
\begin{align}
	\begin{array}{ll}
		& \displaystyle \min_{x \in \cX} f(x)  \\
		& \displaystyle \text{s.t. } \min_{u \in U_\Sigma} g_k (x, \theta + \lambda u) \geq 0, \quad k=1,\dots,K  
	\end{array}
	\label{robustOpt}
\end{align}
where $\lambda$ is a parameter called the nonnegative scale.
Let $x(\lambda ; \theta, \Sigma)$ be the solution of the above problem.
Then, $x(\lambda ; \theta, \Sigma)$ satisfies the true constraints if $\theta^* - \theta \in \lambda U_\Sigma$.
This means that if $\lambda$ is large, it will most likely satisfy the true constraints but may have a poor best-case performance.
Conversely, if $\lambda$ is small, it will have a good best-case performance but also a high risk of violating the true constraints.
Therefore, the choice of $\lambda$ is a significantly important task~\cite{bertsimas2004price}.

In this study, we consider the problem of finding a $\lambda$ such that the problem will have a small objective value and satisfy the true constraints with a sufficiently high probability.
Formally, the problem is defined as follows. 
Let $f^*$ be the true objective function:
\begin{align*}
	f^*(x) :=
	\begin{cases}
		f(x) & \text{ if } g_k (x, \theta^*) \geq 0, \quad k=1,\dots,K,\\
		\infty & \text{otherwise}.
	\end{cases}
\end{align*}
Let $\hat{\theta}_n$ be the empirical average of $\theta^1, \dots, \theta^n$,
$\hat{\Sigma}_n$ be an estimate of the covariance matrix $\Sigma^*$ defined by $\hat{\Sigma}_n := (1/ n) \sum_{i=1}^n (\theta^i  - \hat{\theta}_n)(\theta^i -  \hat{\theta}_n )^{\top}$, 
and $\hat{x}_n(\lambda)$ be defined by $\hat{x}_n(\lambda) := x(\lambda; \hat{\theta}_n, \hat{\Sigma}_n)$.
Our goal then is to find an algorithm $\mathcal{A}$ that determines a scale $\lambda$ from the observation $\theta^n$ such that the ($1 - \delta$)-quantile of $f^*(\hat{x}(\cA(\theta^n)))$, i.e.,  the \emph{value-at-risk} $\VaR_\delta[ f^*(\hat{x}(\cA(\theta^n))) ]$, is minimized as follows~\footnote{Here we define $\VaR_{\delta} [f^*(\hat{x}(\cA(\theta^n))) ] := \inf \{\eta \mid  \Prob(f^*(\hat{x}(\cA(\theta^n))) \leq \eta) \geq 1- \delta \} $, where the probability is taken for the i.i.d. sampling of $\theta^n$.}:
\begin{prob}\label{prob}
	Given $\delta > 0$,
	find an algorithm $\cA: \theta^n \mapsto \lambda$ 
	that minimizes $\VaR_{\delta}[f^*_n(\hat{x}(\cA(\theta^n))) ]$.
\end{prob}
\subsection{Standard Approach}\label{subsecBaseline}
%

A standard approach for Problem~\ref{prob} determines a $\lambda$ such that $\hat{\theta}_n - \theta^* \in \lambda U_{\Sigma}$ holds with a desired probability~\cite{ben2013robust,bertsimas2018data,delage2010distributionally}.
Let $\Sigma^* := E_{\theta \sim \cP} [(\theta - \theta^*)(\theta - \theta^*)^{\top}] $ be the true covariance matrix and $x^*(\lambda) := x(\lambda; \theta^*, \Sigma^*)$.
We then obtain the following:
\begin{prop}\label{propBase}
	Suppose that $\hat{\theta}_n - \theta^* \in \lambda U_{\Sigma^*}$
	with probability at least $1-\delta$.
	It then holds that $\VaR_{\delta}[f^*(x(\lambda ; \hat{\theta}, \Sigma^*) )] \leq f^*(x^*(2 \lambda ))$.
\end{prop}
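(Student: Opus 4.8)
\textit{Proof proposal.} The plan is to deduce the $\VaR$ bound from a pointwise statement on a high-probability event. Let $E := \{\hat\theta_n - \theta^* \in \lambda U_{\Sigma^*}\}$, which by hypothesis satisfies $\Prob(E)\ge 1-\delta$, and abbreviate $x_\lambda := x(\lambda;\hat\theta_n,\Sigma^*)$ (writing $\hat\theta_n$ for the $\hat\theta$ of the statement). By the definition of $\VaR_\delta$ in the footnote, it suffices to show the deterministic inclusion $E \subseteq \{\,f^*(x_\lambda) \le f^*(x^*(2\lambda))\,\}$: then $\Prob(f^*(x_\lambda)\le f^*(x^*(2\lambda)))\ge 1-\delta$, which is exactly $\VaR_\delta[f^*(x_\lambda)] \le f^*(x^*(2\lambda))$. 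If $f^*(x^*(2\lambda)) = \infty$ there is nothing to prove, so assume the right-hand side is finite, i.e. $x^*(2\lambda)$ attains a finite value and in particular is feasible for problem~\eqref{robustOpt} with data $(\theta^*,\Sigma^*,2\lambda)$.

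The geometric heart of the argument is an ellipsoidal ``triangle inequality''. On $E$, any $\theta'$ with $\theta' - \hat\theta_n \in \lambda U_{\Sigma^*}$ can be written as $\theta' - \theta^* = (\theta'-\hat\theta_n) + (\hat\theta_n - \theta^*)$, a sum of two vectors in $\lambda U_{\Sigma^*}$; since $U_{\Sigma^*}$ is convex and centrally symmetric, $\lambda U_{\Sigma^*} + \lambda U_{\Sigma^*} = 2\lambda U_{\Sigma^*}$, so $\theta' - \theta^* \in 2\lambda U_{\Sigma^*}$. Hence on $E$ we have the set inclusion $\hat\theta_n + \lambda U_{\Sigma^*} \subseteq \theta^* + 2\lambda U_{\Sigma^*}$. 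Because the robust constraint $\min_{u\in U_\Sigma} g_k(x,\theta+\lambda u)\ge 0$ is, by definition, precisely $g_k(x,\theta')\ge 0$ for every $\theta'$ in the uncertainty region $\theta+\lambda U_\Sigma$, feasibility is monotone under shrinking the region: every $x$ feasible for~\eqref{robustOpt} with data $(\theta^*,\Sigma^*,2\lambda)$ is feasible for~\eqref{robustOpt} with data $(\hat\theta_n,\Sigma^*,\lambda)$. In particular $x^*(2\lambda)$ is feasible for the latter problem, so that problem has nonempty feasible set and $x_\lambda$ is well defined.

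It remains to compare objective values and to pass from $f$ to $f^*$. By optimality of $x_\lambda$ for~\eqref{robustOpt} with data $(\hat\theta_n,\Sigma^*,\lambda)$ and feasibility of $x^*(2\lambda)$ for the same problem, $f(x_\lambda) \le f(x^*(2\lambda))$. Next, on $E$ we have $\theta^* - \hat\theta_n \in \lambda U_{\Sigma^*}$ (same as $\hat\theta_n - \theta^* \in \lambda U_{\Sigma^*}$ by central symmetry), i.e. $\theta^* \in \hat\theta_n + \lambda U_{\Sigma^*}$, so robust feasibility of $x_\lambda$ at scale $\lambda$ centered at $\hat\theta_n$ forces $g_k(x_\lambda,\theta^*)\ge 0$ for all $k$, whence $f^*(x_\lambda) = f(x_\lambda)$. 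Similarly $\theta^* \in \theta^* + 2\lambda U_{\Sigma^*}$ (take $u=0$), so $f^*(x^*(2\lambda)) = f(x^*(2\lambda))$. Chaining these equalities with the objective inequality gives $f^*(x_\lambda) = f(x_\lambda) \le f(x^*(2\lambda)) = f^*(x^*(2\lambda))$ on $E$, completing the reduction.

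I do not expect a substantive obstacle here: the proof is a one-line probability reduction, the Minkowski-sum identity $\lambda U_{\Sigma^*} + \lambda U_{\Sigma^*} = 2\lambda U_{\Sigma^*}$, and monotonicity of robust feasibility. The only points needing care are the degenerate cases where a robust problem is infeasible (handled by first disposing of $f^*(x^*(2\lambda)) = \infty$ and then transferring nonemptiness through the set inclusion) and the bookkeeping of when $f^*$ and $f$ agree, which relies on the central symmetry of $U_{\Sigma^*}$ so that membership of $\theta^*$ in the relevant uncertainty regions holds on $E$.
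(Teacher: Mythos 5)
Your proposal is correct and follows essentially the same route as the paper: the paper reduces the $\VaR_\delta$ bound to the high-probability event $\hat\theta_n-\theta^*\in\lambda U_{\Sigma^*}$ and then invokes Fact~\ref{factConf}, whose proof is exactly your uncertainty-region inclusion $\hat\theta_n+\lambda U_{\Sigma^*}\subseteq\theta^*+2\lambda U_{\Sigma^*}$ (giving $f(x_\lambda)\le f(x^*(2\lambda))$) together with $\theta^*\in\hat\theta_n+\lambda U_{\Sigma^*}$ (giving true feasibility of $x_\lambda$). The only difference is that you inline the proof of Fact~\ref{factConf} instead of citing it, and your event-based $\VaR$ reduction is a slightly more careful writing of the same step.
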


If $\hat{\theta}_n$ is (asymptotically) distributed by a normal distribution $\cN(\theta^*,\Sigma^*/n)$ with mean $\theta^*$ and covariance $\Sigma^*/n$,
then such $\lambda$ will be determined by $\lambda = \chi_d^{-1}(1 - \delta) / \sqrt{n}$,
where $\chi_d$ is a chi-distribution with a degree of freedom $d$.

Observe that such a $\lambda$ will be of $O(\sqrt{d/n} )$.
Thus, the speed of convergence depends on the dimension $d$ of the parameter $\theta$.
The speed of convergence will be especially degraded 
when a large number of features ($d \gg 1$) are available, but only a small number of features is useful,
which will cause the so-called ``curse of dimensionality'' in robust optimization.

\subsection{Our Approach and Results}\label{subsecApproach}
In this study, we prove the following theorem, which improves the convergence rate of Corollary~\ref{propBase}:
\begin{thm}[Informal version of Theorem~\ref{thmFormal}] \label{thmInformal}
	There is an algorithm $\cA$, which is given in Algorithm~\ref{algoReduction}, such that the output $\hat{\lambda} = \cA(\theta^n)$ satisfies
	\begin{align*}
		\VaR_{\delta}[f^*(\hat{x}_n(\hat{\lambda})) ] \leq f^*(x^*(\overline{\lambda}_n)),
	\end{align*}
	where 
	\begin{align*}
		\lim_{n \to \infty} \sqrt{n} \overline{\lambda}_n \leq \chi_1^{-1}(1 - \delta/K).
	\end{align*}
\end{thm}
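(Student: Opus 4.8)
The plan is to reduce the robust feasibility conditions to one-dimensional Studentized statements about the parameter error, using that each $g_k$ is affine in $\theta$. Write $g_k(x,\theta)=\langle a_k(x),\theta\rangle+c_k(x)$. Since the support function of $U_\Sigma$ is $u\mapsto\sqrt{u^\top\Sigma u}$, one has $\min_{u\in U_\Sigma}g_k(x,\theta+\lambda u)=g_k(x,\theta)-\lambda\sqrt{a_k(x)^\top\Sigma a_k(x)}$, so the robust problem~\eqref{robustOpt} instantiated at $(\hat\theta_n,\hat\Sigma_n)$ is an ordinary program and its solution $\hat x_n(\lambda)$ satisfies the true constraints whenever, for every $k$,
\[
Z_{n,k}(\lambda):=\frac{\langle a_k(\hat x_n(\lambda)),\,\hat\theta_n-\theta^*\rangle}{\sqrt{a_k(\hat x_n(\lambda))^\top\hat\Sigma_n\,a_k(\hat x_n(\lambda))}}\ \le\ \lambda .
\]
For a fixed direction $a$, $\langle a,\hat\theta_n-\theta^*\rangle$ is the centred sample mean of the scalars $\langle a,\theta^i\rangle$ and $a^\top\hat\Sigma_n a$ its sample variance, so $\sqrt n$ times the corresponding ratio is asymptotically $\cN(0,1)$; this single Gaussian, in place of the $d$-dimensional chi concentration behind the standard approach (Proposition~\ref{propBase}), is what produces $\chi_1^{-1}$.

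Next I would analyse the data-adaptive scale $\hat\lambda$ produced by Algorithm~\ref{algoReduction}. \emph{Coverage.} The output $\hat\lambda$ is, in essence, the smallest scale that the algorithm's empirical (resampling-based) criterion certifies as an upper confidence bound on $\max_k Z_{n,k}(\lambda)$ at level $1-\delta$; I would show that $\sqrt n\,\hat\lambda$ converges to at most $\chi_1^{-1}(1-\delta/K)$ and that, at this scale, the displayed feasibility condition holds for all $k$ on an event of probability at least $1-\delta$: for each $k$ it fails with probability at most $\delta/K$ by the asymptotic normality above (the two-sided quantile $\chi_1^{-1}(1-\delta/K)$ making the sign of $a_k$ immaterial), and a union bound over $k=1,\dots,K$ supplies the factor $\delta/K$. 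On this $(1-\delta)$-event $f^*(\hat x_n(\hat\lambda))=f(\hat x_n(\hat\lambda))$. \emph{Objective.} It remains to bound $f(\hat x_n(\hat\lambda))$ by $f^*(x^*(\overline\lambda_n))$. The plan is to rewrite each empirical robust constraint as a perturbation of the true constraint $g_k(x,\theta^*)\ge 0$ of size $\langle a_k(x),\hat\theta_n-\theta^*\rangle=O_P(1/\sqrt n)$, read off $\overline\lambda_n$ as the resulting effective robustness scale, and transport it through the value function $s\mapsto f^*(x^*(s))$, which is nondecreasing and (under the regularity conditions) locally Lipschitz near $s=0$; the same union bound over $k$ then gives $\sqrt n\,\overline\lambda_n\to\chi_1^{-1}(1-\delta/K)$. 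Assembling the two parts, on a single event of probability at least $1-\delta$ the solution is true-feasible with objective at most $f^*(x^*(\overline\lambda_n))$, so $\VaR_\delta[f^*(\hat x_n(\hat\lambda))]\le f^*(x^*(\overline\lambda_n))$ with $\sqrt n\,\overline\lambda_n$ tending to at most $\chi_1^{-1}(1-\delta/K)$.

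The hard part, and the place where the \emph{empirical} domain reduction in Algorithm~\ref{algoReduction} is essential, is the objective step: the crude accounting behind Proposition~\ref{propBase} --- which asks $x^*(\cdot)$ to be feasible for the empirical robust program and hence effectively doubles the scale --- is too lossy to reach the tight constant, so one must genuinely exploit that $\hat\lambda$ is selected from the data rather than fixed in advance. A second, pervasive difficulty is that the direction $a_k(\hat x_n(\lambda))$, and through the algorithm $\hat\lambda$ itself, is data-dependent, so no central-limit statement can be invoked with a fixed direction; this is precisely what the regularity conditions of Theorem~\ref{thmFormal} are for --- continuity and Lipschitzness of $a_k$ and of $\lambda\mapsto x^*(\lambda)$, a uniform stability estimate $\hat x_n(\lambda)\to x^*(\lambda)$, and the fact that $\hat\lambda=O_P(1/\sqrt n)\to 0$ (so the relevant neighbourhood of $x^*(0)$ shrinks) --- which together let one replace $a_k(\hat x_n(\lambda))$ by its deterministic limit $a_k(x^*(0))$ up to a stochastic-equicontinuity (Donsker-type) remainder of order $o_P(1/\sqrt n)$. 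One must also check that the stopping rule of Algorithm~\ref{algoReduction} is both valid (it never certifies a scale before feasibility is genuinely guaranteed at level $1-\delta$) and not overly conservative (it certifies some scale whose $\sqrt n$-limit is $\chi_1^{-1}(1-\delta/K)$).
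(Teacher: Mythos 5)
There is a genuine gap: your plan replaces the paper's central mechanism with a pointwise, data-dependent-direction CLT argument, and the step that actually needs work is left as a sketch. Your feasibility criterion $Z_{n,k}(\hat\lambda)\le\hat\lambda$ controls the constraint only at the random point $\hat x_n(\hat\lambda)$, but the VaR bound also requires comparing objectives, i.e.\ showing that some true-robust solution $x^*(\cdot)$ at a slightly larger scale is feasible for the \emph{empirical} robust program on the same $(1-\delta)$-event; that involves the error $\hat\theta_n-\theta^*$ in the \emph{different} directions $a_k(x^*(\cdot))$, so a union bound over the $K$ constraints at one point does not cover it, and "transporting through the value function" does not substitute for it. The paper resolves exactly this by working with the subspace-uniform error set $S(\lambda,\cY;\Sigma)$ and Lemma~\ref{lemConf} (and Lemma~\ref{lemConf2} with $\kappa=3$), which require the error to be controlled uniformly over a set $\cY$ containing both $\hat x(\lambda)$ and $x^*(2\lambda)$; the two-step Algorithm~\ref{algoReduction} then sandwiches the empirical domain $\hat\cY$ between deterministic sets $\underline{\cY}_n\subseteq\hat\cY\subseteq\overline{\cY}_n$ with $\overline{\cY}_n\to\{x^*(0)\}$, so the uniform bound asymptotically costs only the single-point, per-constraint quantile $\chi_1^{-1}(1-\delta/K)$. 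Your proposal contains no analogue of this uniformity-over-a-shrinking-set device, which is the whole point of the empirical domain reduction.

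Moreover, the machinery you invoke to handle data dependence (Lipschitzness of $a_k$ and of $\lambda\mapsto x^*(\lambda)$, uniform stability of $\hat x_n(\lambda)$, Donsker-type stochastic equicontinuity) is not among the hypotheses of Theorem~\ref{thmFormal}, whose only regularity condition is the set-limit uniqueness of $x^*(0)$ together with $n\ge n_0$; the paper avoids asymptotics in random directions altogether by proving convexity of $S$ (Lemma~\ref{lemSConvex}), applying the Berry--Esseen bound for convex sets (Lemma~\ref{lemHighBE}), controlling $\hat\Sigma_n$ via Lemma~\ref{lemCovEst}, and estimating the resulting spatial uniform bound by Monte Carlo sampling with the guarantee of Proposition~\ref{propSampleAlgo}. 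Your description of $\hat\lambda$ as "the smallest scale certified as an upper confidence bound on $\max_k Z_{n,k}(\lambda)$" therefore misrepresents what Algorithm~\ref{algoReduction} computes, and your claim that $\sqrt n\,\overline\lambda_n\to\chi_1^{-1}(1-\delta/K)$ exactly should in any case only be an upper bound, since the limit is $\mu(1-\delta,\{x^*(0)\};\cN_{\Sigma^*},\Sigma^*)$, which the union bound merely dominates by $\chi_1^{-1}(1-\delta/K)$.
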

It is important to note that our $\overline{\lambda}_n$ is asymptotically independent of the dimensionality of the parameter $\theta$,
i.e., the speed of convergence is less affected by the dimensionality of the parameter $\theta$.

The theorem is proved in the following steps. 
We first derive a new inequality on a subspace.
We then establish methods to apply the new inequality effectively.

\subsubsection{New Inequality on Subspace}
Recall that Corollary~\ref{propBase} is derived from the following fundamental fact in robust optimization.
\begin{fact}\label{factConf}
	Let $\lambda \geq 0$. If $\theta \in \bR^d$ and $\Sigma \succeq O$ satisfy $\theta - \theta^* \in \lambda U_{\Sigma}$,
	then 
	\begin{align}
		f^*(x(\lambda; \theta, \Sigma) ) \leq f^*(x(2\lambda; \theta^*, \Sigma)). \label{ineqFact}
	\end{align}
\end{fact}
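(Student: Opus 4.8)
The plan is to deduce \eqref{ineqFact} from an inclusion between robust feasible regions, using only the convexity and central symmetry of the uncertainty set $U_\Sigma$. For a center $c \in \bR^d$ and a scale $\mu \ge 0$, write $F(\mu; c, \Sigma) := \{ x \in \cX : g_k(x, c + \mu u) \ge 0 \text{ for all } u \in U_\Sigma,\ k = 1,\dots,K \}$, so that $x(\mu; c, \Sigma)$ is a minimizer of $f$ over $F(\mu;c,\Sigma)$ (when $F(\mu;c,\Sigma) = \emptyset$ we read the associated value of $f^*$ as $+\infty$, which makes the claim trivial, so assume both regions are nonempty). Note that $U_\Sigma$ is the unit ball of the seminorm $u \mapsto \sqrt{u^\top \Sigma^{-1} u}$ (with $\Sigma^{-1}$ read as a pseudo-inverse on its range if $\Sigma$ is singular), hence it is convex and centrally symmetric: $0 \in U_\Sigma$ and $U_\Sigma = -U_\Sigma$.

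The key step is the inclusion $F(2\lambda; \theta^*, \Sigma) \subseteq F(\lambda; \theta, \Sigma)$. By hypothesis $\theta - \theta^* \in \lambda U_\Sigma$, so $\theta - \theta^* = \lambda w$ for some $w \in U_\Sigma$. Fix $x \in F(2\lambda;\theta^*,\Sigma)$ and any $v \in U_\Sigma$; then we have the vector identity $\theta + \lambda v = \theta^* + 2\lambda \cdot \tfrac12(v + w)$, and $\tfrac12(v+w) \in U_\Sigma$ by convexity, so $g_k(x, \theta + \lambda v) = g_k(x, \theta^* + 2\lambda \cdot \tfrac12(v+w)) \ge 0$ for every $k$, i.e.\ $x \in F(\lambda; \theta, \Sigma)$. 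Since minimizing $f$ over a larger feasible set can only decrease the optimum, this yields $f(x(\lambda;\theta,\Sigma)) \le f(x(2\lambda;\theta^*,\Sigma))$.

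It remains to replace $f$ by $f^*$ on both sides. Since $0 \in U_\Sigma$, every $x \in F(2\lambda;\theta^*,\Sigma)$ satisfies $g_k(x,\theta^*) \ge 0$, so $f^*(x(2\lambda;\theta^*,\Sigma)) = f(x(2\lambda;\theta^*,\Sigma))$; and since $\theta^* = \theta + \lambda(-w)$ with $-w \in U_\Sigma$ by symmetry, every $x \in F(\lambda;\theta,\Sigma)$ likewise satisfies $g_k(x,\theta^*) \ge 0$, so $f^*(x(\lambda;\theta,\Sigma)) = f(x(\lambda;\theta,\Sigma))$. Chaining these two identities with the inequality of the previous paragraph proves \eqref{ineqFact}; the degenerate case $\lambda = 0$ forces $\theta = \theta^*$ and \eqref{ineqFact} is an equality.

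I do not anticipate a genuine obstacle: this is the standard \emph{doubling} argument for ellipsoidal uncertainty. The two points that require care are (i) invoking convexity of $U_\Sigma$ to put the midpoint $\tfrac12(v+w)$ back into $U_\Sigma$, and (ii) the bookkeeping that upgrades $f$ to $f^*$ — in particular, verifying that the minimizer of the $\theta$-centered robust problem is itself feasible for the true constraints, which is exactly what central symmetry of $U_\Sigma$ supplies.
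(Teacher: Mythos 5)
Your proof is correct and takes essentially the same route as the paper: your feasible-set inclusion $F(2\lambda;\theta^*,\Sigma)\subseteq F(\lambda;\theta,\Sigma)$ via the midpoint/convexity identity is exactly the paper's inequality $\min_{u\in U_\Sigma} g_k(x,\theta^*+2\lambda u)\le \min_{u\in U_\Sigma} g_k(x,\theta+\lambda u)$, and your central-symmetry step showing every point of $F(\lambda;\theta,\Sigma)$ satisfies the true constraints is the paper's inequality $\min_{u\in U_\Sigma} g_k(x,\theta+\lambda u)\le g_k(x,\theta^*)$. The only (harmless) extra is that you also verify $f^*=f$ at $x(2\lambda;\theta^*,\Sigma)$, which is not needed since $f\le f^*$ holds everywhere.
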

This fact holds because
the assumption $\theta - \theta^* \in \lambda U_{\Sigma}$ implies
$\min_{u \in U_\Sigma} g (x, \theta + \lambda u) \leq g (x, \theta^*)$ for \emph{arbitrary $g: \bR^m \times \bR^d \to \bR$ and $x \in \bR^m$}. 
Here, we observe that this is too conservative.
If we consider given $g_k$ ($k=1,\dots,K$) and a subspace $\cY \subseteq \cX$ that includes the resulting solution $x(\lambda; \theta, \Sigma)$, 
we can prove the same inequality~\eqref{ineqFact} under a weaker assumption.
Let $r_k(x; \Sigma) := \max_{u \in U_{\Sigma}} g_k(x, u)$.
For $\cY \subseteq \bR^d$, let $S(\lambda, \cY; \Sigma)$ be the set of errors that does not cause a violation of the true constraint over $\cY$, i.e.,
\begin{align*}
	S(\lambda, \cY; \Sigma) := \left\{\Delta \in \bR^d \left| 
	\begin{array}{l}
		|g_k (x, \Delta)| \leq \lambda r_k(y; \Sigma), \\
		\quad k=1,\dots,K, \forall y \in \cY
	\end{array}
	\right.  \right\}. 
\end{align*}
We then obtain the following lemma: 
\begin{lem}\label{lemConf}
	Let $\lambda \geq 0$. If $\theta \in \bR^d$, $\Sigma \succeq O$, and $\cY \subseteq \bR^d$ satisfy
	\begin{align}
		\theta - \theta^* \in S(\lambda, \cY; \Sigma), \label{sufCond1} \\
		x(\lambda; \theta, \Sigma), x(2\lambda; \theta^*, \Sigma) \in \cY, \label{sufCond2}
	\end{align}
	then \eqref{ineqFact} holds.
\end{lem}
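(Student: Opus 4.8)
The plan is to unwind the two robust solutions and shuttle feasibility between the scale-$\lambda$, parameter-$\theta$ problem and the scale-$2\lambda$, parameter-$\theta^*$ problem, using only linearity of each $g_k$ in $\theta$ and the central symmetry of $U_\Sigma$. Write $\hat x := x(\lambda;\theta,\Sigma)$ and $\bar x := x(2\lambda;\theta^*,\Sigma)$. I would first record two elementary facts. (i) Since $g_k$ is linear in $\theta$ and $U_\Sigma=-U_\Sigma$, we have $\min_{u\in U_\Sigma} g_k(x,\theta'+\mu u)=g_k(x,\theta')-\mu\,r_k(x;\Sigma)$ for all $x$, all $\theta'$, and all $\mu\ge0$; hence ``$x$ is feasible for the scale-$\mu$, parameter-$\theta'$ robust problem'' is exactly ``$g_k(x,\theta')\ge\mu\,r_k(x;\Sigma)$ for $k=1,\dots,K$'', and each of $\hat x,\bar x$ lies in $\cX$ and minimizes $f$ over the feasible set of its own problem. (ii) By \eqref{sufCond1} and the definition of $S$, for every $y\in\cY$ and every $k$ we have $|g_k(y,\theta-\theta^*)|\le\lambda\,r_k(y;\Sigma)$; since $g_k$ is linear in $\theta$, replacing $\theta$ by $\theta^*$ (or vice versa) in $g_k(y,\cdot)$ changes its value by at most $\lambda\,r_k(y;\Sigma)$ whenever $y\in\cY$.

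The argument then has two short steps. Step A shows $\hat x$ satisfies the true constraints, so that $f^*(\hat x)=f(\hat x)$: feasibility of $\hat x$ for its own problem gives $g_k(\hat x,\theta)\ge\lambda\,r_k(\hat x;\Sigma)$, and since $\hat x\in\cY$ by \eqref{sufCond2}, fact (ii) yields $g_k(\hat x,\theta^*)\ge g_k(\hat x,\theta)-\lambda\,r_k(\hat x;\Sigma)\ge0$ for all $k$. Step B bounds $f(\hat x)$: if $f^*(\bar x)=\infty$ there is nothing to prove, so assume $\bar x$ satisfies the true constraints, i.e. $f^*(\bar x)=f(\bar x)$. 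I claim $\bar x$ is feasible for the scale-$\lambda$, parameter-$\theta$ problem; indeed $\bar x\in\cY$ by \eqref{sufCond2}, so by (ii) $g_k(\bar x,\theta)\ge g_k(\bar x,\theta^*)-\lambda\,r_k(\bar x;\Sigma)$, while feasibility of $\bar x$ for its scale-$2\lambda$ problem gives $g_k(\bar x,\theta^*)\ge2\lambda\,r_k(\bar x;\Sigma)$; adding the two, $g_k(\bar x,\theta)\ge\lambda\,r_k(\bar x;\Sigma)$, which is exactly the claimed feasibility. Since $\hat x$ minimizes $f$ over the feasible set of the scale-$\lambda$, parameter-$\theta$ problem and that set now contains $\bar x\in\cX$, we get $f(\hat x)\le f(\bar x)=f^*(\bar x)$. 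Combining Steps A and B gives $f^*(\hat x)=f(\hat x)\le f^*(\bar x)$, i.e. \eqref{ineqFact}.

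I do not expect a genuine obstacle: the proof is essentially bookkeeping, and it reduces to Fact~\ref{factConf} in the case $\cY=\cX$, since then $\lambda U_\Sigma\subseteq S(\lambda,\cX;\Sigma)$ (any $\Delta=\lambda u$ with $u\in U_\Sigma$ has $|g_k(y,\Delta)|=\lambda|g_k(y,u)|\le\lambda\,r_k(y;\Sigma)$ by symmetry of $U_\Sigma$). The one thing deserving care is the split of the factor $2$: the $2\lambda$ margin built into $\bar x$ must be divided into one copy of $\lambda\,r_k(\bar x;\Sigma)$ spent transporting $\bar x$ from $\theta^*$ to $\theta$ across $\cY$ and one copy left over to certify scale-$\lambda$ robustness, while symmetrically a single $\lambda$ margin carries $\hat x$ back from $\theta$ to $\theta^*$; the sign-symmetric absolute value in the definition of $S$ is precisely what lets the one inclusion \eqref{sufCond1} power both transports. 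I would also keep the implicit hypotheses in mind — existence of the minimizers written $x(\cdot;\cdot,\cdot)$, and $\Sigma\succeq O$ so that $U_\Sigma$ is a (possibly degenerate) centrally symmetric ellipsoid for which the support-function identity in (i) still holds.
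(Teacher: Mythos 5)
Your proof is correct and follows essentially the same route as the paper's: both rest on the identity $\min_{u\in U_\Sigma}g_k(y,\theta'+\mu u)=g_k(y,\theta')-\mu r_k(y;\Sigma)$ together with the bound $|g_k(y,\theta-\theta^*)|\le\lambda r_k(y;\Sigma)$ from \eqref{sufCond1}, and then invoke optimality of $x(\lambda;\theta,\Sigma)$. The paper merely compresses your Steps A and B into a single chain of inequalities $\min_{u\in U_\Sigma}g_k(y,\theta^*+2\lambda u)\le\min_{u\in U_\Sigma}g_k(y,\theta+\lambda u)\le g_k(y,\theta^*)$ valid for all $y\in\cY$, which it then evaluates at the two solutions guaranteed to lie in $\cY$ by \eqref{sufCond2}.
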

Lemma~\ref{lemConf} generalizes Fact~\ref{factConf} because the assumption of Fact~\ref{factConf} is a sufficient condition in Lemma~\ref{lemConf} for $\cY = \cX$ since $\lambda U_{\Sigma} \subseteq S(\lambda, \cY; \Sigma) $ holds for any $\cY$, and \eqref{sufCond2} is trivial with $\cY = \cX$.
Our idea is to use Lemma~\ref{lemConf} rather than Fact~\ref{factConf} to derive better bounds than Corollary~\ref{propBase}.

The following example shows the effect of choosing a different $\cY$.
\begin{ex}
	Let us consider the following $d = 2$-dimensional parameterized problem:
	\begin{align}
		\begin{array}{ll}
			&\displaystyle \min_{x_1, x_2 \geq 0} x_1 + x_2 \\
			&\text{s.t. } \theta_1 x_1 + \theta_2 x_2 - 2 \geq 0.
		\end{array}
	\end{align}
	The true parameters are $\theta^* = (2,1)^{\top}$, and the true covariance of samples is $\Sigma^* = I_2$, where $I_d$ is an identity matrix of size $d$.
	
	Figure~\ref{figToyEx} illustrates $S(\lambda,\cY) = S(\lambda,\cY; I_2)$ for 
	$\cY = \bR^2$, $\bR_+^2$, and $\bR^{x_1}_+ := \{(x_1,0)^{\top} \in \bR^2  \mid x_1 \geq 0\}$, where all these $\cY$ include the true optimum solution $x^* = (1,0)^{\top}$.
	The figure clearly shows that, with a fixed $\lambda$, a smaller $\cY$ results in a larger $S$:
	\begin{align*}
		\lambda U_{\Sigma^*} = S(\lambda, \bR^{2}) \subsetneq S(\lambda, \bR^{2}_+) \subsetneq S(\lambda, \bR^{x_1}_+).
	\end{align*}
	Recall that Fact~\ref{factConf} and Lemma~\ref{lemConf} lead us to define $\lambda$ in order to satisfy $\hat{\theta}_n - \theta^* \in S(\lambda, \cY)$ with desired probability $1-\delta$.
	
	More specifically, let $\hat{\theta}_n$ be distributed by the normal distribution $\cN(\theta^*, I_2)$ and let $\delta = 0.1$.
	Fact~\ref{factConf} with $U_{I_2} (=  S(\lambda, \bR^{2}))$ then requires
	$\lambda = \chi_2^{-1}(1-\delta) \approx 2.1$,
	while Lemma~\ref{lemConf} together with $S(\lambda, \bR^{x_1}_+)$ requires $\lambda = \chi_1^{-1}(1-\delta) \approx 1.6$.
	Since a smaller $\lambda$ yields a closer solution to the optimal one, this indicates the possibility of having a better solution.
	\begin{figure}[t]
		\centering
		\includegraphics[width=0.5\hsize]{./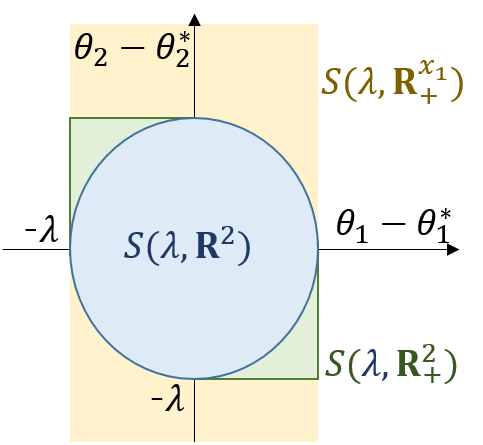}
		\caption{Plot of $S(\lambda,\cY) = S(\lambda,\cY; I_2)$ for $\cY = \bR^2$, $\bR_+^2$, and $\bR^{x_1}_+$, in a two-dimensional toy problem.} \label{figToyEx}
	\end{figure}
	\qed
\end{ex}

For an effective use of Lemma~\ref{lemConf},
we must be able to evaluate the probability $\Prob(\hat{\theta}_n - \theta^* \in S(\lambda, \cY; \Sigma^*))$ for $\cY \subseteq \cX$,
and must find a good subspace $\cY \subseteq \cX$
without knowing $\theta^*$ and $\cP^*$.

\subsubsection{Probability Evaluation}

In order to evaluate probability without $\theta^*$ and $\cP^*$, 
in Section~\ref{secSample} we characterize $S(\lambda, \cY ; \Sigma^*)$ as an intersection of Minkovski's sum of an ellipsoid and a dual cone.
Since such an intersection is a convex set,
we can apply the high-dimensional Berry-Esseen theorem for convex sets~\cite{bentkus2003dependence}
in order to approximate the distribution of $\hat{\theta}_n - \theta^*$ by a normal distribution
while maintaining a theoretical guarantee.
We then propose a sampling-based algorithm for measuring the probability with sufficient accuracy.

\subsubsection{Subspace Selection}

To find good subspace $\cY \subseteq \cX$,
in Section~\ref{secReduction} we extend the two-step empirical domain reduction algorithm proposed for risk minimization with a single uncertainty objective~\cite{yabe2019empirical}
to our context for robust optimization with multiple uncertain constraints.
Such a reduced space must satisfy \eqref{sufCond1} and \eqref{sufCond2} in Lemma~\ref{lemConf}
with high probability.
For \eqref{sufCond1}, since the reduction is based on empirical estimate $\hat{\theta}_n$,
the resulting $S(\lambda, \cY ; \Sigma)$ also includes stochastic uncertainty.
We must thus avoid over-fitting on a given single sample and keep consistency with other sampling scenarios.
In addition, \eqref{sufCond2} requires that the solution $x(\lambda; \theta, \Sigma)$ must be included in $\cY$ without forcing it by adding constraints.
These two requirements require precise control of domain reduction,
which can be done using our empirical two-step domain reduction algorithm.

\subsection{Related studies}
Our approach can be summarized in terms of the following concepts: inequality on subspaces,
probability evaluation, and subspace selection.
While these concepts are novel in the context of robust optimization,
they have already been proven to be effective 
in the context of variance-based regularization for risk minimization in machine learning.
Let us note that variance-based regularization regularizes uncertain objectives
by means of scaled (square-root of) variance,
and thus it is similar to the robust optimization with ellipsoidal uncertainty
that introduces redundancy into uncertain constraints, which redundancy is proportional to (square-root of) variance.

In the context of risk minimization,
the relationship between the size of an optimization domain and the speed of convergence 
has been characterized by various complexity measures,
such as VC dimension~\cite{vapnik1971uniform}, covering number~\cite{maurer2009empirical},
and Rademacher complexity~\cite{bartlett2002rademacher}.
Studies of variance-based regularization~\cite{maurer2009empirical,namkoong2017variance}
have determined the scale of a regularizer on the basis of these complexity measures
in order for a regularized empirical risk function to bound the true objectives with a desired probabilities.
A recent study of empirical hypothesis space reduction~\cite{yabe2019empirical}
has achieved, by means of subspace selection, an acceleration of convergence that is asymptotically independent of the dimensionality of uncertain parameters.

To make use of the approaches in~\cite{yabe2019empirical}, which is designed for variance-based regularization,
in robust optimization, we have dealt with the issues below.
Robust optimization has several uncertain constraints,
while risk minimization has a single uncertain objective.
In particular, while an uncertainty objective does not influence the feasible domain,
uncertain constraints do influence,
which increases the difficulty of subspace selection.
In addition, in robust optimization, it is often assumed that uncertain functions are linear with respect to uncertain parameters, and
this linearity has led us to propose a novel sampling-based evaluation of violation probability.

\section{Preliminary}\label{secPre}
\subsection{Technical lemmas}
This section introduces a series of lemmas.
The following Gershgorin circle theorem characterizes the list of eigenvalues of a matrix $A \in \bR^{d \times d}$
by its elements $a_{ij}$.
\begin{lem}[The Gershgorin circle theorem; see~\cite{golub1989matriz}]\label{lemGersh}
	Let $A = (a_{ij}) \in \bR^{d \times d}$, and define $ R_i = \sum_{j\neq i} |a_{ij}|$.
	Under suitable ordering, the set of eigenvalues $\lambda_1,\dots,\lambda_d$ of $A$ satisfy
	$ a_{ii} - R_i \leq \lambda_i \leq a_{ii} + R_i$.
\end{lem}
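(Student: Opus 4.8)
The plan is to prove the basic disc containment first and then upgrade it to the indexed version by a continuity argument. For the containment, I would take any eigenvalue $\lambda$ with eigenvector $v \neq 0$ and let $i$ be an index achieving $|v_i| = \max_j |v_j| > 0$. Reading off the $i$-th coordinate of $Av = \lambda v$ gives $(\lambda - a_{ii}) v_i = \sum_{j \neq i} a_{ij} v_j$; dividing by $v_i \neq 0$ and applying the triangle inequality together with $|v_j| \leq |v_i|$ yields $|\lambda - a_{ii}| \leq \sum_{j \neq i} |a_{ij}| = R_i$. Hence every eigenvalue lies in the union $\bigcup_i \{ z : |z - a_{ii}| \leq R_i \}$ of Gershgorin discs; when $A$ is symmetric so that all eigenvalues are real (which is the relevant case for the intended applications), each eigenvalue lies in one of the real intervals $[a_{ii} - R_i, \, a_{ii} + R_i]$.

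To obtain the claimed one-to-one correspondence between eigenvalues and indices, I would use the homotopy $A(t) := \mathrm{diag}(a_{11}, \dots, a_{dd}) + t\,(A - \mathrm{diag}(a_{11}, \dots, a_{dd}))$ for $t \in [0,1]$, so that $A(0)$ has eigenvalues exactly $a_{11}, \dots, a_{dd}$ sitting at the disc centers and $A(1) = A$. The Gershgorin discs of $A(t)$ have radii $t R_i \leq R_i$, hence lie inside the original discs for all $t$. Since the roots of the characteristic polynomial depend continuously on $t$, a connected component of $\bigcup_i \{ z : |z - a_{ii}| \leq R_i \}$ formed by exactly $k$ of the discs contains exactly $k$ eigenvalues (counted with multiplicity) for every $t$: the $k$ eigenvalues that start inside that component at $t = 0$ cannot leave it, since they remain in the union of discs, and no outside eigenvalue can enter it. Distributing the $k$ eigenvalues of each such component among the $k$ indices labeling its discs produces an ordering with $\lambda_i \in [a_{ii} - R_i, \, a_{ii} + R_i]$.

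I expect the last step to be the main obstacle: when several discs overlap into one component, the argument only forces the correct count of eigenvalues per component, not a canonical pairing of a given eigenvalue with a given disc, so one must simply fix an arbitrary bijection respecting those counts. I would also need the standard fact that the roots of a monic polynomial vary continuously with its coefficients to make the continuity claim precise; this is routine and I would cite rather than reprove it. In the typical diagonally dominant regime where the discs are pairwise disjoint, this obstacle disappears and the ordering is uniquely determined.
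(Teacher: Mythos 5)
The paper itself gives no proof of Lemma~\ref{lemGersh}: it is quoted as a classical result with a citation, so there is nothing internal to compare against, and the only thing the paper ever uses (in the proof of Lemma~\ref{lemCovEst}) is the plain ``union of discs'' form: every eigenvalue lies in some interval $[a_{ii}-R_i,\,a_{ii}+R_i]$, hence all eigenvalues of $\hat{I}_n$ lie in $[1-\eta_{d,n},\,1+\eta_{d,n}]$ when every entry is within $\eta_{d,n}/d$ of $I_d$. Your first paragraph (max-modulus coordinate of an eigenvector, triangle inequality) is exactly the standard proof of that form and is correct; the homotopy argument correctly yields the refined counting statement that a connected component consisting of $k$ discs contains exactly $k$ eigenvalues.

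The gap is your final step: ``distributing the $k$ eigenvalues of each component among the $k$ indices labeling its discs'' does not produce an ordering with $\lambda_i \in [a_{ii}-R_i,\,a_{ii}+R_i]$, because membership in the component only guarantees membership in the \emph{union} of that component's discs, not in the particular disc you assign the eigenvalue to. You half-acknowledge this (``not a canonical pairing''), but no choice of bijection repairs it: the per-index claim as literally stated in the lemma is false in general. For the symmetric matrix
\begin{align*}
A=\begin{pmatrix} 0 & 1 & 1\\ 1 & 0 & 0\\ 1 & 0 & 0\end{pmatrix},
\end{align*}
the eigenvalues are $0,\pm\sqrt{2}$ while the discs are $[-2,2]$, $[-1,1]$, $[-1,1]$; any bijection must pair one of $\pm\sqrt{2}$ with a radius-one disc, so no ``suitable ordering'' exists. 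Thus the indexed refinement cannot be proved (the lemma's phrasing is an over-statement of Gershgorin's theorem), and the correct target --- which is also all the paper needs, since in Lemma~\ref{lemCovEst} all discs are contained in the single interval $[1-\eta_{d,n},\,1+\eta_{d,n}]$ --- is the union form you established in your first paragraph; the homotopy machinery and the final distribution step should be dropped rather than patched.
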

Hoeffding's inequality below bounds the gap between sample average and true average.
\begin{lem}[Hoeffding's inequality; see~\cite{boucheron2013concentration}]\label{lemHoeffding}
	Let $Z,Z_1,\dots,Z_n$ be i.i.d. random variables with values in $[0,1]$ and let $\delta>0$.
	Then we have
	\begin{align*}
		\Prob \left(  \rE[Z] - \frac{1}{n}\sum_{i=1}^n Z_i \leq \sqrt{\frac{\log1/ \delta}{2n}} \right) \geq 1 - \delta, \\
		\Prob \left(\frac{1}{n}\sum_{i=1}^n Z_i  -  \rE[Z] \leq \sqrt{\frac{\log 1/\delta}{2n}} \right) \geq 1 - \delta.
	\end{align*}
\end{lem}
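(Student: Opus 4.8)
The plan is to prove the second inequality directly by the Chernoff-bounding (exponential moment) method, and then recover the first inequality by symmetry. First I would fix $\epsilon > 0$ and bound the upper tail of the centered sum $S_n := \sum_{i=1}^n Z_i$. For any $s > 0$, Markov's inequality applied to the monotone transform $z \mapsto e^{sz}$ gives
\begin{align*}
	\Prob\!\left( \frac{1}{n} S_n - \rE[Z] \geq \epsilon \right)
	= \Prob\!\left( e^{s(S_n - n\rE[Z])} \geq e^{s n \epsilon} \right)
	\leq e^{-s n \epsilon}\, \rE\!\left[ e^{s(S_n - n\rE[Z])} \right].
\end{align*}
Since $Z_1, \dots, Z_n$ are i.i.d., the expectation factorizes as $\prod_{i=1}^n \rE[ e^{s(Z_i - \rE[Z])}]$, reducing the problem to bounding the moment-generating function of a single centered variable $Z_i - \rE[Z]$.

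The key technical input is Hoeffding's lemma: for a random variable $X$ with $X \in [a,b]$ almost surely and $\rE[X] = 0$, one has $\rE[e^{sX}] \leq e^{s^2 (b-a)^2 / 8}$. I would prove this lemma separately by setting $\psi(s) := \log \rE[e^{sX}]$, checking $\psi(0) = 0$ and $\psi'(0) = \rE[X] = 0$, and observing that $\psi''(s)$ equals the variance of $X$ under the exponentially tilted measure $d\mathbb{P}_s \propto e^{sX}\, d\mathbb{P}$; since any random variable supported on $[a,b]$ has variance at most $(b-a)^2 / 4$, a second-order Taylor expansion yields the stated bound. Applying this with $X = Z_i - \rE[Z] \in [-\rE[Z], 1 - \rE[Z]]$, an interval of length one, gives $\rE[ e^{s(Z_i - \rE[Z])}] \leq e^{s^2 / 8}$, and hence
\begin{align*}
	\Prob\!\left( \frac{1}{n} S_n - \rE[Z] \geq \epsilon \right) \leq e^{-s n \epsilon}\, e^{n s^2 / 8}.
\end{align*}

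Next I would optimize the free parameter $s$. The exponent $-sn\epsilon + ns^2/8$ is minimized at $s = 4\epsilon$, producing the bound $e^{-2 n \epsilon^2}$. Setting $e^{-2n\epsilon^2} = \delta$ and solving gives $\epsilon = \sqrt{\log(1/\delta) / (2n)}$, which is exactly the second stated inequality. The first inequality follows by applying the identical argument to the variables $-Z_i$, whose centered versions $\rE[Z] - Z_i$ also lie in an interval of length one, so the same $e^{-2n\epsilon^2}$ tail bound holds for $\rE[Z] - \frac{1}{n}S_n$. The main obstacle is Hoeffding's lemma itself, i.e. the uniform sub-Gaussian control $\rE[e^{sX}] \leq e^{s^2(b-a)^2/8}$ for bounded centered $X$; everything else is the routine Chernoff recipe of exponentiating, factorizing by independence, and tuning $s$. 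An alternative route to the lemma avoids the tilted-measure computation by instead using convexity of $z \mapsto e^{sz}$ to write $e^{sz} \leq \frac{b-z}{b-a} e^{sa} + \frac{z-a}{b-a} e^{sb}$ on $[a,b]$, taking expectations, and bounding the resulting function of $s$; I would keep this as a fallback in case the tilting argument is deemed less self-contained.
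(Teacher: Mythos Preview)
Your argument is correct and is exactly the standard Chernoff--Hoeffding proof (exponential Markov, factorize by independence, Hoeffding's lemma via the tilted-variance or convexity argument, optimize $s$). The paper itself does not prove this lemma at all; it is stated with a citation to \cite{boucheron2013concentration} and used as a black box, so there is no ``paper's own proof'' to compare against beyond noting that your approach is the textbook one that reference contains.
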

The following Berry-Esseen theorem enables us to evaluate the speed of convergence of the central limit theorem (for tighter coefficient $c_1$ in a one-dimensional case, see~\cite{korolev2010upper}).
\begin{lem}[The high-dimensional Berry-Esseen inequality~\cite{bentkus2003dependence}] \label{lemHighBE}
	Let $d \ge 1$ be an integer.
	Let $\cC_d$ be a set of convex set in $\bR^d$ and $c_d := 400 d^{1/4}$.
	Let $Z_1,\dots, Z_n$ be i.i.d. random variables over $\bR^d$ with $E[Z_i] = 0$, $E[Z_i Z_i^{\top}] = I_d$, and $E[\|Z_i \|^3]  \leq \tau$.
	Let $Z = (1/\sqrt{n}) \sum_{i=1}^n Z_i$.
	It then holds that 
	\begin{align*}
		\sup_{C \in \cC_d} \left| \underset{W \sim \cN(0,I_d)}{\Prob} (W \in C) - \underset{Z}{\Prob} (Z \in C) \right| \leq \frac{c_d \tau }{\sqrt{n}}
	\end{align*}
\end{lem}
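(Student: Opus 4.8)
The statement is a deep result of Bentkus, and the plan is to follow the smoothing-plus-Lindeberg strategy, taking care to isolate the single place where the dimension enters only as $d^{1/4}$. The first move is to replace the indicator of a convex set by a smooth test function. Fix $C \in \cC_d$ and a smoothing scale $\epsilon > 0$, and set $h_{C,\epsilon}(x) := \rE[\mathbf{1}_C(x + \epsilon W')]$ for an independent $W' \sim \cN(0,I_d)$. Convolution with the Gaussian density makes $h_{C,\epsilon}$ smooth with derivative bounds $\|D^j h_{C,\epsilon}\|_\infty = O(\epsilon^{-j})$ for $j = 1,2,3$, while $h_{C,\epsilon}$ agrees with $\mathbf{1}_C$ outside an $\epsilon$-shell of $\partial C$. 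Sandwiching $\mathbf{1}_C$ between smooth upper and lower approximants then yields
\begin{align*}
\bigl| \Prob(Z \in C) - \Prob(W \in C) \bigr|
\le \bigl| \rE[h_{C,\epsilon}(Z)] - \rE[h_{C,\epsilon}(W)] \bigr|
+ R_{\mathrm{bd}},
\end{align*}
where the boundary error $R_{\mathrm{bd}}$ is the measure assigned to the shell $(\partial C)^\epsilon$.

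Second, I would control the boundary term by Gaussian anti-concentration for convex sets. The decisive geometric fact is that the Gaussian surface area of any convex body in $\bR^d$ is $O(d^{1/4})$ (Ball, Nazarov), which yields $\Prob(W \in (\partial C)^\epsilon) \le K d^{1/4} \epsilon$ uniformly over $C \in \cC_d$; the analogous bound for $Z$ follows once the replacement step is in place. This is exactly where the factor $d^{1/4}$ in $c_d$ originates, and it is what makes the whole class of convex sets tractable in high dimension: without convexity the boundary measure could grow like a genuine power of $d$.

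Third, for the smooth-function gap I would run a Lindeberg swap. Writing $Z = (1/\sqrt n)\sum_i Z_i$ and $W = (1/\sqrt n)\sum_i W_i$ with $W_i$ i.i.d.\ standard Gaussian, I replace $Z_i$ by $W_i$ one index at a time and Taylor-expand $h_{C,\epsilon}$ around each hybrid partial sum. Because $\rE[Z_i] = \rE[W_i] = 0$ and $\rE[Z_iZ_i^\top] = \rE[W_iW_i^\top] = I_d$, the first- and second-order contributions cancel, leaving a third-order remainder bounded by $\rE[\|Z_i\|^3 + \|W_i\|^3]\,n^{-3/2}\,\|D^3 h_{C,\epsilon}\|_\infty$; summing over the $n$ swaps and using $\rE[\|Z_i\|^3] \le \tau$ gives a contribution of order $\tau\,\epsilon^{-3}/\sqrt n$.

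The main obstacle is balancing the two error sources. Naively trading $K d^{1/4}\epsilon$ against $\tau \epsilon^{-3}/\sqrt n$ and optimizing over $\epsilon$ produces only a rate of order $n^{-1/8}$, far short of the optimal $n^{-1/2}$. Recovering the optimal rate is the heart of Bentkus's argument, and here I would exploit the smoothing already furnished by the Gaussian summands inside each hybrid sum: once $k$ of the $W_i$ are in place, the conditional law carries a Gaussian component of scale $\sqrt{k/n}$, so the effective third-derivative bound may be taken against this induced smoothing rather than the crude $\epsilon^{-3}$. Feeding this into a recursion in $n$ — equivalently, an induction that controls the error at sample size $n$ in terms of the errors at smaller sizes — decouples the artificial smoothing scale from the convergence rate, so that $d$ survives only through the anti-concentration constant. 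Tracking all constants through this recursion and through Ball's surface-area estimate then produces the explicit $c_d = 400 d^{1/4}$, which is precisely the form needed for the sampling-based probability evaluation of Section~\ref{secSample}.
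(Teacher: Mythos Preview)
The paper does not prove this lemma at all: it is quoted verbatim as a result of Bentkus (the citation~\cite{bentkus2003dependence} is attached to the lemma header), and no argument for it appears anywhere in the text or appendix. The paper simply \emph{uses} the inequality as a black box in the proofs of Lemma~\ref{lemParaEst}, Lemma~\ref{lemCovEst}, and Proposition~\ref{propNormalApprox}. So there is no ``paper's own proof'' to compare against; your write-up goes well beyond what the authors attempted.

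As to the content of your sketch: it is a faithful high-level outline of Bentkus's method---Gaussian smoothing of the indicator, the Ball--Nazarov $O(d^{1/4})$ Gaussian surface-area bound for convex bodies to control the boundary shell, a Lindeberg swap whose first two orders cancel by moment matching, and then the crucial induction-on-$n$ (self-smoothing via the already-swapped Gaussian block) that upgrades the naive $n^{-1/8}$ to $n^{-1/2}$. That last step is correctly identified as the heart of the matter, but you have only named it rather than carried it out; turning the recursion into an actual bound with explicit constants is where most of the technical work lies, and your proposal stops short of it. For the purposes of this paper none of that is needed: a one-line citation suffices.
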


\subsection{Estimation accuracy}
We can show here the accuracy of estimators $\hat{\theta}_n$ and $\hat{\Sigma}_n$ on the basis of the lemmas introduced 
in the previous section and the following assumption. Let $\cP^*_{\Delta}$ denote the distribution of $\Delta = \Sigma^{* -1/2}(\theta - \theta^*)$
where $\theta \sim \cP^*$.
\begin{assump}\label{assump}
	An upper-bound $\tau$ of  $ E_{\Delta \sim \cP^*_{\Delta}} [\| \Delta \|^3]$ and $\tau'$ of
	$\max_{j=1,2,\dots,d} E_{\Delta \sim \cP^*_{\Delta}} [ \Delta_j^6]$ is given.
\end{assump}
We here assume that (an upper-bound of) the third moment $\tau$ and sixth moment $\tau'$ is known,
since the speed of convergence is less influenced by the higher-order moments $\tau$ and $\tau'$ than by the first and second moments ($\theta^*$ and $\Sigma^*$).
In practice, $\tau$ and $\tau'$ can be calculated if the domain of the distribution $\cP^*$ is bounded,
or it can also be estimated using samples $\theta^n$.

Under Assumption~\ref{assump}, the distribution of $\hat{\theta}_n$ converges to the normal distribution $\cN(0,\Sigma^* / n)$ by the central limit theorem.
Recall the definitions of $\cC_d$ and $c_d := 400 d^{1/4}$ in Lemma~\ref{lemHighBE}.
The speed of convergence can be characterized by $\tau$, $\tau'$, and Lemma~\ref{lemHighBE} as follows:
\begin{lem} \label{lemParaEst}
	It holds that
	\begin{align*}
		\sup_{C \in \cC_d} \left| \underset{\Delta \sim \cN(0, \Sigma^*/n)}{\Prob} (\Delta \in C) - \underset{\hat{\theta}_n}{\Prob} (\hat{\theta}_n - \theta^* \in C) \right| \leq \frac{c_d \tau}{\sqrt{n}}.
	\end{align*}
\end{lem}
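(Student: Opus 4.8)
The plan is to reduce the claim to the high-dimensional Berry--Esseen inequality (Lemma~\ref{lemHighBE}) via a whitening transformation. First I would set $Z_i := \Sigma^{*-1/2}(\theta^i - \theta^*)$ for $i = 1,\dots,n$; these are i.i.d., and since $Z_i$ is distributed according to $\cP^*_\Delta$ we have $\rE[Z_i] = 0$, $\rE[Z_i Z_i^{\top}] = I_d$, and, by Assumption~\ref{assump}, $\rE[\|Z_i\|^3] \le \tau$. (Here I use that $\Sigma^*$ is invertible, which is implicit in the very definition of $\cP^*_\Delta$.) Writing $Z := (1/\sqrt{n})\sum_{i=1}^n Z_i = \sqrt{n}\,\Sigma^{*-1/2}(\hat{\theta}_n - \theta^*)$, Lemma~\ref{lemHighBE} applies verbatim and gives
\begin{align*}
	\sup_{C' \in \cC_d}\left| \underset{W \sim \cN(0,I_d)}{\Prob}(W \in C') - \underset{Z}{\Prob}(Z \in C') \right| \le \frac{c_d \tau}{\sqrt{n}}.
\end{align*}

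Next I would transport this bound through the invertible linear map $T\colon v \mapsto \sqrt{n}\,\Sigma^{*-1/2} v$. Fix $C \in \cC_d$ and put $C' := T(C)$; as $T$ is linear and invertible, $C'$ is again convex, so $C' \in \cC_d$. Two elementary observations then complete the translation: (i) $\hat{\theta}_n - \theta^* \in C \iff Z \in C'$; and (ii) if $\Delta \sim \cN(0, \Sigma^*/n)$ then $W := T\Delta \sim \cN(0, I_d)$, since $\cov(T\Delta) = n\,\Sigma^{*-1/2}(\Sigma^*/n)\Sigma^{*-1/2} = I_d$, whence $\Prob_{\Delta \sim \cN(0,\Sigma^*/n)}(\Delta \in C) = \Prob_{W \sim \cN(0,I_d)}(W \in C')$. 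Combining (i) and (ii) with the displayed bound, we obtain for every $C \in \cC_d$
\begin{align*}
	\left| \underset{\Delta \sim \cN(0,\Sigma^*/n)}{\Prob}(\Delta \in C) - \underset{\hat{\theta}_n}{\Prob}(\hat{\theta}_n - \theta^* \in C) \right| \le \frac{c_d \tau}{\sqrt{n}},
\end{align*}
and taking the supremum over $C \in \cC_d$ yields the assertion.

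I do not expect a genuine obstacle: the argument is essentially bookkeeping around the whitening step. The one point deserving care is the invariance invoked in the second paragraph — that $T$ maps $\cC_d$ onto itself — which is precisely what lets the supremum over $C$ be controlled by the supremum over $C'$ in Lemma~\ref{lemHighBE}; this follows at once from linearity of $T$ and from $\cC_d$ being a family of convex sets. A secondary point worth stating explicitly is the standing nondegeneracy of $\Sigma^*$ (so that $\Sigma^{*-1/2}$, and hence $T^{-1}$, exist); without it both the normalization and the target statement (which features $\cN(0,\Sigma^*/n)$ through $\cP^*_\Delta$) would have to be reformulated on the range of $\Sigma^*$.
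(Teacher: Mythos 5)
Your proposal is correct and follows essentially the same route as the paper: whiten via $Z_i = \Sigma^{*-1/2}(\theta^i - \theta^*)$, apply Lemma~\ref{lemHighBE}, and transfer the bound back through the linear map $(\Sigma^*/n)^{1/2}$, using that invertible linear maps preserve the class of convex sets. You merely spell out the transport step (and the implicit nondegeneracy of $\Sigma^*$) more explicitly than the paper, which compresses it into the single phrase ``mapping both random variables.''
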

For $\delta' >0$, let us define $\eta_{d,n}(\delta')$ by
\begin{align*}
	\eta_{d,n}(\delta') := \frac{\tau^{'1/3} d}{\sqrt{n}} \chi_1^{-1}\left( 1 - \frac{\delta'}{d^2}  + \frac{c_1 \tau'}{ \sqrt{n}}  \right).
\end{align*}
The relative accuracy of estimator $\hat{\Sigma}_n$ can be characterized by Lemma~\ref{lemGersh} and Lemma~\ref{lemHighBE} as follows.
\begin{lem}\label{lemCovEst}
	For $\delta' >0$, the following then holds with probability at least $ 1 - \delta'$:
	\begin{align}
		\left(1-  \eta_{d,n}(\delta') \right)
		\Sigma^* 
		\preceq \hat{\Sigma}_n
		\preceq \left( 1+ \eta_{d,n}(\delta') \right)\Sigma^*. \label{approx2}
	\end{align}
\end{lem}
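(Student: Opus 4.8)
The plan is to control the relative error of $\hat\Sigma_n$ by working in the whitened coordinates $\Delta = \Sigma^{*-1/2}(\theta-\theta^*)$, where the target covariance is the identity, and then pull back. Write $\hat\Sigma_n = \frac1n\sum_i(\theta^i-\hat\theta_n)(\theta^i-\hat\theta_n)^\top$ and $M_n := \Sigma^{*-1/2}\hat\Sigma_n\Sigma^{*-1/2} = \frac1n\sum_i \Delta^i (\Delta^i)^\top - (\bar\Delta_n)(\bar\Delta_n)^\top$, where $\Delta^i$ are i.i.d.\ with mean $0$ and covariance $I_d$ and $\bar\Delta_n$ is their empirical mean. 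The conclusion \eqref{approx2} is equivalent to $(1-\eta_{d,n}(\delta'))I_d \preceq M_n \preceq (1+\eta_{d,n}(\delta'))I_d$, i.e.\ every eigenvalue of $M_n - I_d$ lies in $[-\eta_{d,n}(\delta'),\eta_{d,n}(\delta')]$, so it suffices to bound $\|M_n - I_d\|$ in operator norm with probability at least $1-\delta'$.

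To get that operator-norm bound I would invoke Lemma~\ref{lemGersh}: it is enough to show that each diagonal entry $(M_n)_{jj}$ is within (roughly) $\eta/2$ of $1$ and each row-sum of off-diagonal magnitudes $\sum_{k\neq j}|(M_n)_{jk}|$ is at most (roughly) $\eta/2$, so that $a_{jj}-R_j$ and $a_{jj}+R_j$ both stay in the claimed interval. Each entry $(M_n)_{jk}$ is, up to the negligible $\bar\Delta_n$ correction, an empirical mean of the i.i.d.\ scalars $\Delta^i_j\Delta^i_k$, whose expectation is $\delta_{jk}$ (Kronecker). The natural way to control a single such entry with a sharp, dimension-light constant is to apply the one-dimensional Berry--Esseen bound (Lemma~\ref{lemHighBE} with $d=1$, constant $c_1$): the deviation $\frac1n\sum_i(\Delta^i_j\Delta^i_k - \delta_{jk})$ is, after normalization by its standard deviation (which is $O(1)$, controlled by the sixth-moment bound $\tau'$ via $\mathrm{E}[(\Delta_j\Delta_k)^2]\le \sqrt{\mathrm{E}[\Delta_j^4]\mathrm{E}[\Delta_k^4]}\le \tau'^{1/3}\cdot\tau'^{1/3}$ or a similar Cauchy--Schwarz estimate — this is where $\tau'^{1/3}$ in the definition of $\eta$ comes from), approximately $\cN(0,1)$ with error at most $c_1\tau'/\sqrt n$. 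Hence $\Prob(|\frac1n\sum_i(\Delta^i_j\Delta^i_k-\delta_{jk})|\ge t) \le 2(1-\chi_1(t\sqrt n/\sigma_{jk})) + 2c_1\tau'/\sqrt n$ for the relevant scale $\sigma_{jk}$, and choosing $t$ so that this is at most $\delta'/d^2$ per entry reproduces exactly the $\chi_1^{-1}(1-\delta'/d^2 + c_1\tau'/\sqrt n)$ appearing in $\eta_{d,n}$. Then a union bound over the $\le d^2$ entries (each of the $d$ diagonal terms and each row's off-diagonal contributions, bounded by summing $d-1$ such entry-bounds, giving the extra factor $d$ in $\eta_{d,n}$) yields the whole event with probability $\ge 1-\delta'$, and Gershgorin converts the entrywise control into the eigenvalue sandwich.

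The main obstacle I anticipate is twofold and mostly bookkeeping rather than conceptual. First, one must handle the centering term $\bar\Delta_n(\bar\Delta_n)^\top$: it is of order $\|\bar\Delta_n\|^2 = O_P(d/n)$, which is a lower-order term compared to the $O(d/\sqrt n)$ main bound, but making that rigorous requires a side estimate on $\|\bar\Delta_n\|$ (itself obtainable from Lemma~\ref{lemParaEst} or Lemma~\ref{lemHighBE} applied to a ball) and then folding its contribution into the constant cleanly so that the stated $\eta_{d,n}(\delta')$ still dominates — some care is needed so the asymptotics $\sqrt n\,\eta_{d,n}\to$ constant are not spoiled. Second, one must make sure that applying the $d=1$ Berry--Esseen bound to each scalar $\Delta^i_j\Delta^i_k$ is legitimate: the hypothesis requires a finite third moment of the normalized summand, i.e.\ a bound on $\mathrm{E}[|\Delta_j\Delta_k - \delta_{jk}|^3]$, which follows from the sixth-moment Assumption~\ref{assump} via Cauchy--Schwarz/Hölder, but the precise constant relating $\tau'$ to these mixed moments must be tracked carefully to match the exponent $\tau'^{1/3}$ and the argument of $\chi_1^{-1}$ in the statement. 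Everything else — the union bound, the Gershgorin step, and rearranging $(1\pm\eta)I_d \preceq M_n$ back to the $\Sigma^*$-scaled form — is routine.
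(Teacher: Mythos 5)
Your proposal follows essentially the same route as the paper's proof: whiten via $\Delta^i=\Sigma^{*-1/2}(\theta^i-\theta^*)$, control each entry of the whitened empirical covariance by the one-dimensional Berry--Esseen bound (constant $c_1$, with the sixth-moment bound $\tau'$ giving the variance and third-moment control via H\"older), take a union bound over the $d^2$ entries, convert entrywise control to the eigenvalue sandwich with the Gershgorin circle theorem, and conjugate back by $\Sigma^{*1/2}$. The only difference is that you explicitly flag the centering correction $\bar\Delta_n\bar\Delta_n^\top$, which the paper silently absorbs by identifying $\Sigma^{*1/2}\hat I_n\Sigma^{*1/2}$ with $\hat\Sigma_n$; your treatment of it as a lower-order term to be folded in is the more careful reading, and otherwise the arguments coincide.
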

Note that the above bound can be improved by using the Berry-Esseen theorem that is specialized for a one-dimensional random variable, rather than by using Lemma~\ref{lemHighBE}.
Since this improvement does not influence the order of convergence,
we here use Lemma~\ref{lemHighBE} for simplicity of presentation.

\section{Probability Evaluation Algorithm}\label{secSample}
This section proposes an algorithm that realizes the concept of probability evaluation
discussed in Section~\ref{subsecApproach}.
\subsection{Geometric characterization of $S(\mu, \cY; \Sigma)$}
Recall that $S(\lambda, \cY; \Sigma) \subseteq \bR^d$ is the set of estimation error $\theta - \theta^*$
that does not cause a violation of the true constraint over $\cY$ with the scale $\lambda$. 
Here we prove that $S$ is a convex set for any $\lambda$, $\cY$, and $\Sigma$.

Since $g_k$ is linear in $\theta$, there exists a function $v_k : \cX \to \bR^d$ such that
\begin{align*}
	g_k(x, \theta) = \theta^{\top} v_k(x).
\end{align*}
For $\cY \subseteq \cX$, we define $V_k(\cY)$ by
\begin{align*}
	V_k(\cY) := \{ v_k (y) \in \bR^d  \mid y \in \cY \}. 
\end{align*}
We then define the polar cone $C_{k}^+(\cY)$ and dual cone $C_{k}^-(\cY)$ of $V_k(\cY)$ by
\begin{align*}
	C_{k}^+(\cY) := \{ \Delta \in \bR^d \mid \Delta^{\top} v \geq 0, \forall v \in V_k(\cY) \},\\
	C_{k}^-(\cY) := \{ \Delta \in \bR^d \mid \Delta^{\top} v \leq 0, \forall v \in V_k(\cY) \} .
\end{align*}
The polar cone $C_{k}^+(\cY)$ and the dual cone $C_{k}^-(\cY)$ are convex cones.
For a pair of set $U, S \subseteq \bR^d$, their Minkowski's sum is defined by $U + S := \{u+s \mid u \in U, s \in S \}$.
Note that Minkowski's sum of two convex sets is also a convex set.
The following theorem characterize $S(\mu, \cY; \Sigma)$ as an intersection of convex sets:
\begin{lem}\label{lemSConvex}
	For any $\lambda \geq 0$, $\cY \subseteq \cX$, and $\Sigma \succeq 0$, $S(\lambda, \cY; \Sigma)$ is a convex set.
	In particular, if $g_k(x,\theta)$ is linear in $x$ for all $k=1,2,\dots,K$ and $\cY$ is convex,
	then it holds that
	\begin{align*}
		S(\lambda, \cY; \Sigma) 
		:= \left(\bigcap_{k=1}^K \lambda U_{\Sigma} + C_{k}^+ (\cY) \right) \cap \left( \bigcap_{k=1}^K \lambda U_{\Sigma^*} + C_{k}^- (\cY) \right) .
	\end{align*}
\end{lem}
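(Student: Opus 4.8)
The plan is to handle the two assertions in turn, reducing both to the linearity of $g_k$ in $\theta$. Writing $g_k(x,\theta)=\theta^{\top}v_k(x)$, the quantity $r_k(y;\Sigma)=\max_{u\in U_{\Sigma}}u^{\top}v_k(y)$ is simply the support function of the ellipsoid $U_{\Sigma}$, so $r_k(y;\Sigma)=\sqrt{v_k(y)^{\top}\Sigma\,v_k(y)}$, which I abbreviate $\|\Sigma^{1/2}v_k(y)\|$, and therefore
\[
 S(\lambda,\cY;\Sigma)=\bigcap_{k=1}^{K}\ \bigcap_{v\in V_k(\cY)}\bigl\{\,\Delta\in\bR^{d}\ :\ |v^{\top}\Delta|\le\lambda\|\Sigma^{1/2}v\|\,\bigr\}.
\]
For the first, unconditional claim this already suffices: each set in the intersection is a slab $\{\Delta:-c\le v^{\top}\Delta\le c\}$ with $c=\lambda\|\Sigma^{1/2}v\|\ge0$, hence an intersection of two halfspaces and in particular convex, and an arbitrary intersection of convex sets is convex.

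For the explicit formula I would split the absolute value. Writing $V_k:=V_k(\cY)$ and $A_k^{\pm}:=\{\Delta:\pm v^{\top}\Delta\le\lambda\|\Sigma^{1/2}v\|\ \ \forall v\in V_k\}$, we have $S(\lambda,\cY;\Sigma)=\bigcap_k(A_k^{-}\cap A_k^{+})=\bigl(\bigcap_k A_k^{-}\bigr)\cap\bigl(\bigcap_k A_k^{+}\bigr)$, so it suffices to prove $A_k^{+}=\lambda U_{\Sigma}+C_k^{-}(\cY)$ and $A_k^{-}=\lambda U_{\Sigma}+C_k^{+}(\cY)$. The inclusion ``$\supseteq$'' in the first identity is elementary: if $\Delta=u+c$ with $u\in\lambda U_{\Sigma}$ and $c^{\top}v\le0$ for all $v\in V_k$, write $u=\Sigma^{1/2}z$ with $\|z\|\le\lambda$; then $v^{\top}\Delta=(\Sigma^{1/2}v)^{\top}z+c^{\top}v\le\lambda\|\Sigma^{1/2}v\|$, and the analogous bound gives $\lambda U_{\Sigma}+C_k^{+}(\cY)\subseteq A_k^{-}$ since $C_k^{+}(\cY)=-C_k^{-}(\cY)$.

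The reverse inclusion is the heart of the matter, and it is exactly here that the extra hypotheses are used. ``$g_k$ linear in $x$'' makes $v_k$ affine, so together with ``$\cY$ convex'' the set $V_k=v_k(\cY)$ is convex; hence $\cone(V_k)=\{tv:t\ge0,\ v\in V_k\}$ is a convex cone and its closure $\widehat C_k$ is a closed convex cone whose polar cone $\{w:w^{\top}v\le0\ \forall v\in\widehat C_k\}$ is precisely $C_k^{-}(\cY)$. Given $\Delta\in A_k^{+}$, positive homogeneity and continuity extend the defining inequality from $v\in V_k$ to all $v\in\widehat C_k$. I would then argue by contradiction: the set $P:=\lambda U_{\Sigma}+C_k^{-}(\cY)$ is convex and closed (compact plus closed), so a point $\Delta\notin P$ is strictly separated by some $w$ with $w^{\top}\Delta>\sigma_P(w)$; since $\sigma_P(w)=\lambda\|\Sigma^{1/2}w\|+\sigma_{C_k^{-}(\cY)}(w)$ and, by the bipolar theorem, $\sigma_{C_k^{-}(\cY)}$ vanishes on $\widehat C_k$ and equals $+\infty$ off it, the separating $w$ must lie in $\widehat C_k$ and satisfy $w^{\top}\Delta>\lambda\|\Sigma^{1/2}w\|$, contradicting the extended inequality at $v=w$. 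The identity $A_k^{-}=\lambda U_{\Sigma}+C_k^{+}(\cY)$ follows by the same argument applied to $-\Delta$, and reassembling over $k$ yields the stated expression (with the evident reading that $\Sigma^{*}$ in the second family of Minkowski sums should be $\Sigma$).

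The step I expect to be the main obstacle is this last one: establishing ``$\subseteq$'' via separation requires (i) checking that the Minkowski sum $\lambda U_{\Sigma}+C_k^{-}(\cY)$ is closed, so that strict separation is available, and (ii) identifying exactly the directions along which its support function is finite. Both points rest on the convexity of $V_k(\cY)$ — which is why the hypotheses on $g_k$ and $\cY$ are needed for the formula but not for the bare convexity of $S$.
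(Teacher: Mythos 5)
Your proof is correct, but it follows a genuinely different route from the paper's on the key identity. For the bare convexity claim you simply write $S(\lambda,\cY;\Sigma)$ as an intersection of slabs $\{\Delta : |v_k(y)^{\top}\Delta|\le \lambda r_k(y;\Sigma)\}$, which is cleaner than the paper's argument (the paper takes two points of $S^{\pm}_k$, fixes for each $y$ a certificate $u^{+}_k(\Delta,y)\in\lambda U_\Sigma$, and shows convex combinations of certificates work, using linearity of $g_k$ in $\theta$ and convexity of $U_\Sigma$). For the explicit Minkowski-sum representation the paper rewrites membership in $S^{\pm}_k$ as $\max_{y\in\cY}\min_{u'\in\lambda U_{\Sigma}} g_k(y,\mp\Delta+u')\le 0$ and invokes Sion's minimax theorem — this is exactly where linearity in $x$ and convexity of $\cY$ (plus compactness of $U_\Sigma$) enter, since swapping the max and min produces a single $u'\in\lambda U_\Sigma$ valid for all $y$, i.e.\ $\Delta\in\lambda U_\Sigma+C_k^{\pm}(\cY)$. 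You instead prove the nontrivial inclusion by convex duality: closedness of $\lambda U_\Sigma+C_k^{-}(\cY)$ (compact plus closed), the support-function identity $\sigma_{\lambda U_\Sigma + C_k^-}(w)=\lambda\|\Sigma^{1/2}w\|+\sigma_{C_k^-}(w)$, the bipolar theorem identifying where $\sigma_{C_k^-}$ is finite with $\overline{\cone(V_k(\cY))}$, and strict separation; the hypotheses enter through the convexity of $V_k(\cY)$, which is what lets the slab inequality extend from $V_k(\cY)$ to its closed conic hull (it would not extend across convex combinations, since the norm on the right is sublinear in the wrong direction). Both arguments are sound and of comparable length; the minimax route is shorter once Sion is granted, while yours makes explicit exactly where convexity of $V_k(\cY)$ is indispensable and localizes the topological care (closedness of the Minkowski sum) that the minimax argument leaves implicit. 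Your reading of $\Sigma^*$ in the displayed formula as a typo for $\Sigma$, and of the $x$ versus $y$ mismatch in the definition of $S$, agrees with the paper's intent.
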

\subsection{Normal approximation via multidimensional Berry-Esseen' theorem}
On the basis of~\cite[Definition 5]{yabe2019empirical},
we here introduce the minimum spatial uniform bounds $\mu (\alpha, \cY; \cP, \Sigma)$
as a minimum scale $\mu \geq 0$ that satisfies \eqref{sufCond1} with probability $\alpha$
when $\theta - \theta^*$ is distributed by $\cP$:
\begin{df}\label{dfMu}
	Let $p \in [0,1]$, $\cY \subseteq \cX$, $\Sigma \succeq O $, and $\cP$ be a distribution over $\bR^d$.
	The minimum spatial uniform bound $\mu (\alpha, \cY; \cP, \Sigma) \in \bR_+$
	is defined by:
	\begin{align}
		\mu(\alpha, \cY; \cP, \Sigma)
		:= \inf \left\{ \mu \geq 0 \mid  \Prob_{\theta \sim \cP} \left(\theta \in  S(\mu, \cY; \Sigma) \right)   \geq p \right\}.  \label{eqDefMu} 
	\end{align}
\end{df}
Let us define $\cP^*_n$ as the distribution of $\hat{\theta}_n - \theta^*$.
The ideal goal of this section, then, is to obtain $\mu(\alpha, \cY; \cP^*_n, \Sigma^*)$.
$\mu$ satisfies the following component-wise monotonicity.
Let us denote $\cN_{\Sigma} := \cN(0, \Sigma)$.
%
\begin{lem}\label{lemMonotonicity}
	Let $p \in [0,1]$, $\cY \subseteq \cX$, $\Sigma \succeq O $, and $\cP$.
	Then the following holds.
	
	(i) $\mu(p, \cY; \cP, \Sigma) \leq \mu(p', \cY; \cP, \Sigma)$ for all $p \leq p'$.
	[Monotonicity in $p$]
	
	(ii)  $\mu(p, \cY; \cP, \Sigma) \geq \mu(p, \cZ; \cP, \Sigma)$ for all $\cY \subseteq \cZ$.
	[Reverse Monotonicity in $\cY$]
	
	(iii) $\mu(p, \cY; \cN_{\Sigma_1}, \Sigma) \geq \mu(p, \cY; \cN_{\Sigma_2}, \Sigma)$ and $\mu(p, \cY; \cP, \Sigma_1) \geq \mu(p, \cY; \cN_{\Sigma_2}, \Sigma_2)$ for all $\Sigma_1 \preceq \Sigma_2$.
	[Reverse Monotonicity in $\Sigma$]
	
	(iv) $\mu(p, \cY; \cN_{\nu_1 \Sigma}, \nu_2 \Sigma) = \sqrt{\nu_1 / \nu_2} \mu(p, \cY; \cN_{\Sigma}, \Sigma)$ for all $\nu_1, \nu_2 > 0$.
	[Scaling]
\end{lem}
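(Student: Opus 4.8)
The plan is to reduce all four parts to a handful of elementary facts about the set $S(\mu,\cY;\Sigma)$ and then read each claim off the infimum in Definition~\ref{dfMu}. Writing $g_k(y,\Delta)=\Delta^\top v_k(y)$ and $r_k(y;\Sigma)=\max_{u\in U_\Sigma}u^\top v_k(y)=\sqrt{v_k(y)^\top\Sigma\, v_k(y)}\ge 0$, we have
\[
	S(\mu,\cY;\Sigma)=\bigl\{\Delta\in\bR^d : |\Delta^\top v_k(y)|\le \mu\, r_k(y;\Sigma)\ \text{ for all } y\in\cY,\ k=1,\dots,K \bigr\}.
\]

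First I would collect the structural properties I need. \textbf{(a)} Homogeneity in $\mu$: $S(c\mu,\cY;\Sigma)=c\,S(\mu,\cY;\Sigma)$ for every $c\ge 0$, directly from the definition. \textbf{(b)} Inclusion-monotonicity in $\cY$: if $\cY\subseteq\cZ$ then $S(\mu,\cZ;\Sigma)\subseteq S(\mu,\cY;\Sigma)$, since enlarging $\cY$ only adds constraints. \textbf{(c)} Inclusion-monotonicity in the robustness matrix: $\Sigma_1\preceq\Sigma_2$ gives $r_k(y;\Sigma_1)\le r_k(y;\Sigma_2)$ and hence $S(\mu,\cY;\Sigma_1)\subseteq S(\mu,\cY;\Sigma_2)$. \textbf{(d)} Shape: $S(\mu,\cY;\Sigma)$ is symmetric about the origin (immediate from the absolute values) and convex (Lemma~\ref{lemSConvex}). \textbf{(e)} Scaling of the robustness matrix: $r_k(y;\nu\Sigma)=\sqrt{\nu}\, r_k(y;\Sigma)$, so $S(\mu,\cY;\nu\Sigma)=S(\sqrt{\nu}\,\mu,\cY;\Sigma)$.

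Given these, (i) is immediate, since the set $\{\mu\ge 0:\Prob_{\theta\sim\cP}(\theta\in S(\mu,\cY;\Sigma))\ge p\}$ over which the infimum is taken shrinks as $p$ increases, so its infimum is nondecreasing in $p$. For (ii), by (b) the map $\mu\mapsto\Prob_{\theta\sim\cP}(\theta\in S(\mu,\cY;\Sigma))$ is pointwise ordered according to $\cY$, so the corresponding feasible sets in Definition~\ref{dfMu} are nested and the claimed ordering of the infima follows. For (iii), the robustness-matrix comparison is handled exactly as (ii) using (c); the comparison of the two Gaussian laws is the only step with real content, and I would write $\cN_{\Sigma_2}=\cN_{\Sigma_1}\ast\cN_{\Sigma_2-\Sigma_1}$ and invoke Anderson's inequality for the symmetric convex body $C=S(\mu,\cY;\Sigma)$ from (d):
\[
	\Prob_{\cN_{\Sigma_2}}(C)=\int \Prob_{\cN_{\Sigma_1}}(C-y)\, d\cN_{\Sigma_2-\Sigma_1}(y)\le \Prob_{\cN_{\Sigma_1}}(C),
\]
which again nests the feasible sets; the mixed inequality of (iii) follows by chaining this with (c). Finally, for (iv) I would change variables: with $W\sim\cN_\Sigma$ we have $\sqrt{\nu_1}\,W\sim\cN_{\nu_1\Sigma}$, so (a) and (e) give
\[
	\Prob_{\theta\sim\cN_{\nu_1\Sigma}}\bigl(\theta\in S(\mu,\cY;\nu_2\Sigma)\bigr)=\Prob_{W}\Bigl(W\in S\bigl(\mu\sqrt{\nu_2/\nu_1},\cY;\Sigma\bigr)\Bigr),
\]
so the two infimum problems correspond under $\mu\leftrightarrow\mu\sqrt{\nu_2/\nu_1}$, which is the asserted identity.

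The homogeneity and scaling identities are routine; the one step I expect to be the real obstacle is the Gaussian comparison in (iii), which works only because $S(\mu,\cY;\Sigma)$ is symmetric and convex — so the care goes into invoking Lemma~\ref{lemSConvex} at the needed generality and applying Anderson's inequality correctly. A secondary point to watch throughout is the bookkeeping of inequality directions, since enlarging $\cY$ shrinks $S$ whereas enlarging $\Sigma$, or concentrating the distribution, enlarges the relevant probability.
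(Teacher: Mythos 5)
Your proposal is correct in substance, and for parts (i), (ii) and (iv) it coincides with the paper's own argument: the same monotone inclusions ($S(\mu,\cZ;\Sigma)\subseteq S(\mu,\cY;\Sigma)$ for $\cY\subseteq\cZ$, and $S(\mu,\cY;\Sigma_1)\subseteq S(\mu,\cY;\Sigma_2)$ for $\Sigma_1\preceq\Sigma_2$), and the same scaling identity $S(\kappa_1\mu,\cY;\kappa_2\Sigma)=\kappa_1\sqrt{\kappa_2}\,S(\mu,\cY;\Sigma)$ combined with the change of variables $\theta\mapsto\sqrt{\nu_1}\,\theta$ inside the infimum of Definition~\ref{dfMu}. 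Where you genuinely go beyond the paper is (iii): the paper's proof consists of the single observation $S(\lambda,\cY;\Sigma_1)\subseteq S(\lambda,\cY;\Sigma_2)$, which only handles the robustness-matrix slot and says nothing about comparing the two Gaussian laws $\cN_{\Sigma_1}$ and $\cN_{\Sigma_2}$ in the distribution slot; that comparison is exactly where the content lies (and it is what the proof of Theorem~\ref{thmFormal} actually needs), and you supply it via the convolution argument and Anderson's inequality for the origin-symmetric convex set $S(\mu,\cY;\Sigma)$, with symmetry from the absolute values and convexity from Lemma~\ref{lemSConvex}. On that point your write-up is more complete than the paper's.

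Two cautions. First, watch the inequality signs: what your inclusions (and the paper's own proof sketch, and the way the lemma is invoked in Theorem~\ref{thmFormal}) actually deliver is $\mu(p,\cY;\cP,\Sigma)\le\mu(p,\cZ;\cP,\Sigma)$ for $\cY\subseteq\cZ$, $\mu(p,\cY;\cN_{\Sigma_1},\Sigma)\le\mu(p,\cY;\cN_{\Sigma_2},\Sigma)$ and $\mu(p,\cY;\cP,\Sigma_1)\ge\mu(p,\cY;\cP,\Sigma_2)$ for $\Sigma_1\preceq\Sigma_2$; these are the reverse of the signs printed in (ii) and in the first half of (iii), so you should state explicitly which direction your argument gives rather than assert that ``the claimed ordering follows'' (the printed statement appears to have its directions flipped). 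Second, the last inequality of (iii) as printed compares an arbitrary $\cP$ with $\cN_{\Sigma_2}$; for a general $\cP$ no such comparison can be obtained by chaining Anderson with your fact (c) (take $\cP$ a point mass at the origin, so the left-hand side is $0$), and even for $\cP=\cN_{\Sigma_1}$ the two effects of increasing the covariance and increasing the robustness matrix pull in opposite directions, so chaining only yields comparisons of the reinforcing type, e.g.\ $\mu(p,\cY;\cN_{\Sigma_2},\Sigma_1)\ge\mu(p,\cY;\cN_{\Sigma_1},\Sigma_2)$, which is also the form used in the main theorem. Your ``chaining'' sentence should be made precise in that sense, under the evidently intended reading of the garbled statement.
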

Recall that Lemma~\ref{lemSConvex} shows the convexity of $S(\mu, \cY; \Sigma)$ for arbitrary $\mu$, $\cY$, and $\Sigma$.
We can then apply Lemma~\ref{lemParaEst} to bound the gap of probability in \eqref{eqDefMu}
for $\cP^*_n$ and the corresponding normal $\cN_{\Sigma^* / n}$.
Combined with the monotonicity shown in Lemma~\ref{lemMonotonicity} (i),
we can upper-bound $\mu(\alpha, \cY; \cP^*_n, \Sigma^*)$ of unknown distribution $\cP^*_n$
by its asymptotically normal counterpart:
\begin{prop}\label{propNormalApprox}
	It holds that
	\begin{align*}
		\frac{1}{\sqrt{n}} \mu \left( \alpha  - \frac{c_d \tau}{\sqrt{n}} , \cY; \cN_{\Sigma^*}, \Sigma^* \right) 
		\leq \mu(\alpha, \cY; \cP^*_n, \Sigma^*) 
		\leq  \frac{1}{\sqrt{n}} \mu \left( \alpha  + \frac{c_d \tau}{\sqrt{n}} , \cY; \cN_{\Sigma^*}, \Sigma^* \right).
	\end{align*}
\end{prop}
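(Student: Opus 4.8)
The plan is to combine the high-dimensional Berry--Esseen bound of Lemma~\ref{lemParaEst} with the convexity of $S(\mu,\cY;\Sigma^*)$ from Lemma~\ref{lemSConvex}, and then transfer the resulting probability estimate into an estimate on the quantile $\mu$ via the monotonicity in $p$ from Lemma~\ref{lemMonotonicity}(i), together with the scaling relation in Lemma~\ref{lemMonotonicity}(iv). First I would fix $\cY$, $\mu \ge 0$ and recall that by Lemma~\ref{lemSConvex} the set $S(\mu,\cY;\Sigma^*)$ is convex, so it is an admissible test set $C \in \cC_d$ in Lemma~\ref{lemParaEst}. Applying that lemma to $C = S(\mu,\cY;\Sigma^*)$ gives
\begin{align*}
	\left| \underset{\Delta \sim \cN_{\Sigma^*/n}}{\Prob}\!\left( \Delta \in S(\mu,\cY;\Sigma^*) \right) - \underset{\hat\theta_n}{\Prob}\!\left( \hat\theta_n - \theta^* \in S(\mu,\cY;\Sigma^*) \right) \right| \le \frac{c_d \tau}{\sqrt n},
\end{align*}
i.e. the $\cP^*_n$-probability of $S(\mu,\cY;\Sigma^*)$ and its $\cN_{\Sigma^*/n}$-counterpart differ by at most $c_d\tau/\sqrt n$ for \emph{every} $\mu$.

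Next I would feed this into the definition of $\mu(\cdot)$. If $\mu$ is such that the $\cN_{\Sigma^*/n}$-probability of $S(\mu,\cY;\Sigma^*)$ is at least $\alpha + c_d\tau/\sqrt n$, then the displayed inequality forces the $\cP^*_n$-probability to be at least $\alpha$, so $\mu \ge \mu(\alpha,\cY;\cP^*_n,\Sigma^*)$; taking the infimum over such $\mu$ yields
\begin{align*}
	\mu(\alpha,\cY;\cP^*_n,\Sigma^*) \le \mu\!\left(\alpha + \tfrac{c_d\tau}{\sqrt n},\,\cY;\,\cN_{\Sigma^*/n},\,\Sigma^*\right).
\end{align*}
The symmetric argument — if the $\cP^*_n$-probability is at least $\alpha$ then the $\cN_{\Sigma^*/n}$-probability is at least $\alpha - c_d\tau/\sqrt n$ — gives the matching lower bound
\begin{align*}
	\mu\!\left(\alpha - \tfrac{c_d\tau}{\sqrt n},\,\cY;\,\cN_{\Sigma^*/n},\,\Sigma^*\right) \le \mu(\alpha,\cY;\cP^*_n,\Sigma^*).
\end{align*}
Finally I would rescale: by Lemma~\ref{lemMonotonicity}(iv) with $\nu_1 = 1/n$, $\nu_2 = 1$ we have $\mu(p,\cY;\cN_{\Sigma^*/n},\Sigma^*) = \tfrac{1}{\sqrt n}\,\mu(p,\cY;\cN_{\Sigma^*},\Sigma^*)$, which substituted into the two displays above yields exactly the claimed two-sided bound.

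I do not expect a genuine obstacle here; the proposition is essentially an unpacking of earlier results. The one point that needs a little care is the direction of the Berry--Esseen inequality versus the direction of the monotonicity in $p$: one must check that the $\pm c_d\tau/\sqrt n$ shift lands on the side that makes the inclusion of feasible scales go the right way, and that the infimum defining $\mu$ interacts correctly with the "$\ge \alpha$" threshold (in particular that a probability bound holding for all $\mu$ simultaneously, not just at the infimizing $\mu$, is what licenses passing to the infimum). A secondary subtlety is that $\mu(\cdot)$ as defined uses $\cP$ as the law of $\theta$ itself rather than of $\theta - \theta^*$, so the translation identifying $\Prob_{\hat\theta_n}(\hat\theta_n - \theta^* \in S)$ with $\Prob_{\theta \sim \cP^*_n}(\theta \in S)$ should be stated explicitly, but this is immediate from the definition $\cP^*_n := $ law of $\hat\theta_n - \theta^*$.
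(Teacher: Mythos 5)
Your proposal is correct and follows essentially the same route as the paper: apply the Berry--Esseen bound of Lemma~\ref{lemParaEst} to the convex set $S(\mu,\cY;\Sigma^*)$ (convexity from Lemma~\ref{lemSConvex}), compare the feasible sets in the infimum defining $\mu(\cdot)$ to shift the confidence level by $\pm c_d\tau/\sqrt{n}$, and rescale from $\cN_{\Sigma^*/n}$ to $\frac{1}{\sqrt{n}}\,\mu(\cdot,\cY;\cN_{\Sigma^*},\Sigma^*)$ via Lemma~\ref{lemMonotonicity}(iv). You merely make explicit the symmetric lower-bound direction and the final scaling step, which the paper's proof leaves implicit.
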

\subsection{Sampling algorithm for calculating spatial uniform bounds for normal distribution}\label{subsecSampleAlgo}
This section fixes $p \in [0,1]$, $\cY \subseteq \cX$, and $\Sigma \succeq O$,
and we then denote $\mu'(p) = \mu(p, \cY ; \cN_{\Sigma}, \Sigma)$ for notational simplicity.
Proposition~\ref{propNormalApprox} reduces our goal to that of calculating a minimum spatial uniform bound $\mu'(p)$.
Lemma~\ref{lemMonotonicity} (i) shows that such a $\mu'$ is monotone in $p$,
and $\mu'$ satisfies the following property:
\begin{lem}\label{lemAlgo}
	(i) It holds that $\chi_1^{-1}(p) \leq \mu'(p) \leq \chi_d^{-1}(p)$.
	
	(ii) It holds that 
	\begin{align*}
		&\mu^{' -1}(\lambda)  := \underset{ \Delta \sim \cN_{\Sigma} }{\Prob} 
		\left(
		\begin{array}{r}
			\min_{y \in \cY} \lambda r_k(y; \Sigma) + g_k(y, \Delta) \geq 0,\\
			\min_{y \in \cY} \lambda r_k(y; \Sigma) - g_k(y, \Delta) \geq 0,\\
			\forall k =1,2,\dots,K
		\end{array}\right) .
	\end{align*}
\end{lem}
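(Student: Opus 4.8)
The plan is to prove part (ii) by directly unfolding the definition of $S(\lambda,\cY;\Sigma)$, and then to obtain the two bounds in part (i) by sandwiching the Gaussian mass of $S(\lambda,\cY;\Sigma)$ between the mass of a single linear slab (for the lower bound) and the mass of the ellipsoid $\lambda U_\Sigma$ (for the upper bound).

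\textbf{Part (ii).} The $k$-th family of constraints defining $S(\lambda,\cY;\Sigma)$ reads $|g_k(y,\Delta)| \le \lambda r_k(y;\Sigma)$ for every $y \in \cY$, i.e., $g_k(y,\Delta) \ge -\lambda r_k(y;\Sigma)$ and $g_k(y,\Delta) \le \lambda r_k(y;\Sigma)$ for all $y \in \cY$; quantifying over $y$ turns each of these into a single condition on a minimum, so that
\begin{align*}
\Prob_{\Delta \sim \cN_\Sigma}\left(\Delta \in S(\lambda,\cY;\Sigma)\right)
= \Prob_{\Delta \sim \cN_\Sigma}\left(
\begin{array}{l}
\min_{y\in\cY}\left(\lambda r_k(y;\Sigma) + g_k(y,\Delta)\right) \ge 0,\\
\min_{y\in\cY}\left(\lambda r_k(y;\Sigma) - g_k(y,\Delta)\right) \ge 0,\\
\forall k = 1,\dots,K
\end{array}
\right),
\end{align*}
which is exactly the claimed expression for $\mu'^{-1}(\lambda)$. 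Since $\lambda \le \lambda'$ implies $S(\lambda,\cY;\Sigma) \subseteq S(\lambda',\cY;\Sigma)$, the right-hand side is non-decreasing in $\lambda$, and comparison with Definition~\ref{dfMu} gives $\mu'(p) = \inf\{\lambda \ge 0 : \mu'^{-1}(\lambda) \ge p\}$, so $\mu'^{-1}$ is indeed the generalized inverse of $\mu'$ and the notation is justified.

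\textbf{Upper bound in (i).} As observed right after Lemma~\ref{lemConf}, $\lambda U_\Sigma \subseteq S(\lambda,\cY;\Sigma)$ for every $\cY$ (if $\Delta \in \lambda U_\Sigma$ then $|g_k(y,\Delta)| = |\Delta^\top v_k(y)| \le \max_{u \in \lambda U_\Sigma} u^\top v_k(y) = \lambda r_k(y;\Sigma)$ by symmetry of the ellipsoid). For $\Delta \sim \cN_\Sigma$ we have $\Sigma^{-1/2}\Delta \sim \cN(0,I_d)$, hence $\Delta^\top\Sigma^{-1}\Delta = \|\Sigma^{-1/2}\Delta\|^2$ is $\chi_d^2$-distributed and $\Prob_{\Delta\sim\cN_\Sigma}(\Delta \in \lambda U_\Sigma) = \Prob(\chi_d \le \lambda)$. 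Taking $\lambda = \chi_d^{-1}(p)$ gives $\mu'^{-1}(\chi_d^{-1}(p)) \ge p$, so $\mu'(p) \le \chi_d^{-1}(p)$ by the characterization in (ii). For the lower bound, fix any $y_0 \in \cY$ and any $k$ with $r_k(y_0;\Sigma) > 0$ (if no such pair exists, all constraints defining $S$ are vacuous, $S(\lambda,\cY;\Sigma) = \bR^d$, and the bound is trivial); dropping all constraints but the one for $(k,y_0)$ yields $S(\lambda,\cY;\Sigma) \subseteq \{\Delta : |g_k(y_0,\Delta)| \le \lambda r_k(y_0;\Sigma)\}$. Since $g_k(y_0,\Delta) = \Delta^\top v_k(y_0)$ is a centered Gaussian with variance $v_k(y_0)^\top\Sigma v_k(y_0) = r_k(y_0;\Sigma)^2$, we get $|g_k(y_0,\Delta)|/r_k(y_0;\Sigma) \sim \chi_1$, hence $\mu'^{-1}(\lambda) \le \Prob(\chi_1 \le \lambda)$; therefore $\mu'^{-1}(\lambda) \ge p$ forces $\lambda \ge \chi_1^{-1}(p)$, i.e., $\mu'(p) \ge \chi_1^{-1}(p)$.

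Once the two distributional identities are in place — that a linear image of $\cN(0,\Sigma)$ is Gaussian with the stated variance, and that the Euclidean norm of $\cN(0,I_d)$ is $\chi_d$ — the rest is routine. The only points needing care are the degenerate case isolated in the lower bound (when $\cY$ carries no non-trivial constraint direction for any $k$), and, if one wants $\mu'^{-1}$ to be a genuine two-sided inverse rather than merely the generalized inverse above, the continuity of $\lambda \mapsto \mu'^{-1}(\lambda)$; the latter follows from the absolute continuity of $\cN_\Sigma$ and the fact that the boundary of $S(\lambda,\cY;\Sigma)$ is Lebesgue-null, but it is not required for the statements (i) and (ii) as written.
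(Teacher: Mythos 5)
Your proof is correct and follows essentially the same route as the paper: part (ii) is a direct unfolding of the definitions of $\mu$ and $S(\lambda,\cY;\Sigma)$, the lower bound in (i) comes from relaxing $S(\lambda,\cY;\Sigma)$ to a single constraint $|g_k(y_0,\Delta)|\le \lambda r_k(y_0;\Sigma)$ whose normalized value is $\chi_1$-distributed, and the upper bound from the inclusion $\lambda U_{\Sigma}\subseteq S(\lambda,\cY;\Sigma)$ together with the $\chi_d$ law of $\|\Sigma^{-1/2}\Delta\|$. One small caveat: your parenthetical on the degenerate case is not quite right---if $r_k(y;\Sigma)=0$ for every $k$ and $y\in\cY$ then $\mu'(p)=0$, so the lower bound $\chi_1^{-1}(p)\le\mu'(p)$ fails rather than being trivial; this pathological situation is implicitly excluded by the lemma and is likewise glossed over in the paper's own proof, which fixes an arbitrary $y\in\cY$ and tacitly assumes $r_1(y;\Sigma)>0$.
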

Thus $\mu'$ is monotone, upper and lower bounded,
and its inverse $\mu^{'-1}$ can be calculated by a series of sampling and optimization.
This characterization leads the following algorithm for calculating $\mu'$ by sampling from $\cN_{\Sigma}$ and a binary search, as shown in Algorithm~\ref{algoSample}.

Given $p \in [0,1]$, $\cY \subseteq \cX$, and $\Sigma \succeq O $, together with parameters $\alpha, \beta, \gamma \geq 0$ deciding accuracy,
Algorithm~\ref{algoSample} calculates a $\lambda$ that approximates $\mu'(p)$.
Line~\ref{algoDefQ} of the algorithm first defines the number $Q$ of samples by
\begin{align}
	Q := \left\lceil \frac{\log(2 / \alpha)}{8 \beta^2} \right\rceil. \label{defNumSample}
\end{align}
Line~\ref{algoSampling}, then, generates samples $\varepsilon_i$ for $i=1,2,\dots,Q$ from the normal distribution $\cN_{\Sigma}$.
Lines~\ref{algoBinSearchStart}--\ref{algoBinSearchEnd}
estimate $\mu' (p)$ by the binary search.
Line~\ref{algoBinSearchInit} defines the upper-bound $\overline{\lambda}$ and the lower-bound $\underline{\lambda}$
on the basis of Lemma~\ref{lemAlgo}.
Line~\ref{algoBinSearchLmdUpdate} updates $\lambda$ as a mean of the upper-bound $\overline{\lambda}$ and the lower-bound $\underline{\lambda}$,
and then Lines~\ref{algoBinSearchInvStart}--\ref{algoBinSearchInvEnd} approximately calculate $\mu^{'-1}(\lambda)$ on the basis of the samples $\varepsilon_i$ for $i=1,2,\dots,Q$
and the following lemma:
\begin{lem}\label{lemInS}
	For $\varepsilon \in \bR^d$, $\varepsilon \in S(\lambda, \cY; \Sigma)$ if and only if $\psi(\mu, \varepsilon) \geq 0$, where
	\begin{align}
		\psi(\mu, \varepsilon) := \min\{ \psi_{1,+}(\varepsilon),\psi_{1,-}(\varepsilon),\dots,\psi_{K,+}(\varepsilon),\psi_{K,-}(\varepsilon) \} \nonumber  \\
		\text{ where } \psi_{k,+}(\varepsilon) := \min_{y \in \cY} g_k(y, \varepsilon)  + \mu r_k(y; \Sigma), \nonumber\\
		\psi_{k,-}(\varepsilon) := \min_{y \in \cY} g_k(y, -\varepsilon)  + \mu r_k(y; \Sigma).  \label{optViolation}  
	\end{align}
\end{lem}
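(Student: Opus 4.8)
The plan is to unfold the definition of $S(\lambda,\cY;\Sigma)$ and rewrite its defining inequalities one point $y$ at a time, exploiting that each $g_k$ is linear in its second argument. By definition, $\varepsilon \in S(\lambda,\cY;\Sigma)$ if and only if $|g_k(y,\varepsilon)| \le \lambda\, r_k(y;\Sigma)$ holds for every $k \in \{1,\dots,K\}$ and every $y \in \cY$. First I would split the absolute value: since $g_k(y,\cdot)$ is linear, the scalar inequality $|g_k(y,\varepsilon)| \le \lambda\, r_k(y;\Sigma)$ is equivalent to the pair $g_k(y,\varepsilon) + \lambda\, r_k(y;\Sigma) \ge 0$ and $-g_k(y,\varepsilon) + \lambda\, r_k(y;\Sigma) \ge 0$, and using linearity once more, $-g_k(y,\varepsilon) = g_k(y,-\varepsilon)$, so the second inequality reads $g_k(y,-\varepsilon) + \lambda\, r_k(y;\Sigma) \ge 0$. (Note $r_k(y;\Sigma) = \max_{u\in U_\Sigma} g_k(y,u) \ge 0$ because $U_\Sigma$ is symmetric, so this two-sided bound is indeed the correct reformulation of the absolute-value constraint.)

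Next I would convert the universal quantifier over $\cY$ into the infimum appearing in the definition of $\psi$. For fixed $k$, the statement ``$g_k(y,\varepsilon) + \lambda\, r_k(y;\Sigma) \ge 0$ for all $y \in \cY$'' holds if and only if $\inf_{y\in\cY}\bigl(g_k(y,\varepsilon) + \lambda\, r_k(y;\Sigma)\bigr) \ge 0$, which is exactly $\psi_{k,+}(\varepsilon) \ge 0$ in the notation of \eqref{optViolation} (reading the ``$\min$'' there as an infimum when $\cY$ is not compact). Likewise the all-$y$ version of the second inequality is equivalent to $\psi_{k,-}(\varepsilon)\ge 0$. Hence $\varepsilon\in S(\lambda,\cY;\Sigma)$ if and only if $\psi_{k,+}(\varepsilon)\ge 0$ and $\psi_{k,-}(\varepsilon)\ge 0$ for every $k$, and since finitely many reals are all nonnegative precisely when their minimum is nonnegative, this is equivalent to $\psi(\lambda,\varepsilon) = \min_k\min\{\psi_{k,+}(\varepsilon),\psi_{k,-}(\varepsilon)\}\ge 0$, which is the claim (with $\mu$ identified with $\lambda$).

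I do not expect a deep obstacle here; the argument is just a chain of equivalences. The one point that deserves a line of care is the passage from ``for all $y\in\cY$'' to the infimum: one must observe that $\forall y\; h(y)\ge 0$ is equivalent to $\inf_y h(y)\ge 0$ even when the infimum is not attained, so the equivalence holds regardless of compactness of $\cY$; this is also what makes it legitimate for Algorithm~\ref{algoSample} to test membership in $S$ by solving the finitely many minimization problems defining $\psi_{k,\pm}$. A secondary bookkeeping point is to keep the linearity of $g_k$ in $\theta$ in view throughout, as it is exactly what allows the absolute value to be split and what makes each $\psi_{k,\pm}(\cdot)$ a concave minimization over $\cY$ whose sign certifies (in)feasibility.
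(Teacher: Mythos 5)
Your argument is correct and is exactly the unfolding that the paper itself invokes: its proof of this lemma is the one-line remark that the claim ``directly follows from the definition of $S(\lambda,\cY;\Sigma)$,'' and your chain of equivalences (splitting the absolute value via linearity of $g_k$ in $\theta$, replacing the universal quantifier over $\cY$ by an infimum, and taking the minimum over the finitely many indices $k,\pm$) is just that definition written out carefully, including the harmless identification of $\mu$ with $\lambda$ in the statement. No gap.
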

For each $\varepsilon_i$,
Line~\ref{algoOpt} calculates the optimization problem \eqref{optViolation} for $\varepsilon = \varepsilon_i$,
and then Lines~\ref{algoBinIfPsiStart}--\ref{algoBinIfPsiEnd} approximately calculate $\mu^{'-1}(\lambda)$
by counting the number of such $\varepsilon_i \in S(\lambda, \cY; \Sigma)$ for $i=1,2,\dots,Q$.
Note that some of the calculation of $\psi(\lambda,\varepsilon_i)$ can be omitted in practice 
since $\psi(\lambda,\varepsilon_i)$ is monotonically increasing in $\lambda$.
Lines~\ref{algoBinUpdateStart}--\ref{algoBinUpdateEnd} update the upper-bound $\overline{\lambda}$ or the lower-bound $\underline{\lambda}$
on the basis of the approximate probability $q$ of $\mu^{'-1}(\lambda)$.
Finally, Line~\ref{algoSamplingEnd} outputs the approximate upper-bound $\dot{\mu} = \overline{\lambda}$ of $\mu'(p)$.
\begin{algorithm}[t]
	\caption{Estimation of $\mu(p, \cY; \cN_{\Sigma},  \Sigma)$}\label{algoSample}
	\begin{algorithmic}[1]
		\REQUIRE $p$, $\cY$, $\Sigma$, and accuracy parameter $(\alpha,\beta,\gamma)$.
		\ENSURE An estimate $\dot{\mu}$ of $\mu' (p, \cY; \cN_{\Sigma}, \Sigma)$
		\STATE Define the number of sample $Q$ by \eqref{defNumSample}. \label{algoDefQ}
		\STATE Sample $\varepsilon_i \sim \cN(0, \Sigma)$ for $i=1,2,\dots,Q$. \label{algoSampling}
		\STATE \# Binary search \label{algoBinSearchStart}
		\STATE Initialize $\underline{\lambda} = \chi_1^{-1}(p)$, $\overline{\lambda}= \chi_d^{-1}(p)$ \label{algoBinSearchInit}
		\WHILE{$\overline{\lambda} - \underline{\lambda} \geq \gamma $}
		\STATE Put $\lambda = (\underline{\lambda} + \overline{\lambda}) / 2$ and initialize $q = 0$ \label{algoBinSearchLmdUpdate}
		\FOR{i=1,2,\dots,Q} \label{algoBinSearchInvStart}
		\STATE Solve \eqref{optViolation} and obtain $\psi(\mu, \varepsilon_i)$.\label{algoOpt}
		\IF{$\psi(\mu, \varepsilon_i) \geq 0$} \label{algoBinIfPsiStart}
		\STATE $q = q + 1/Q$
		\ENDIF\label{algoBinIfPsiEnd}
		\ENDFOR \label{algoBinSearchInvEnd}
		\IF{$q \geq p + (\beta / 2)$}\label{algoBinUpdateStart}
		\STATE Put $\underline{\lambda} = \lambda$.
		\ELSE 
		\STATE Put $\overline{\lambda} = \lambda$.
		\ENDIF\label{algoBinUpdateEnd}
		\ENDWHILE \label{algoBinSearchEnd}
		\STATE Output $\dot{\mu} =  \overline{\lambda}$ \label{algoSamplingEnd}
	\end{algorithmic}
\end{algorithm}

This output $\dot{\mu}$ satisfies the following probabilistic guarantee.
\begin{prop}\label{propSampleAlgo}
	With probability at least $1- \alpha$,
	the output $\dot{\mu} $ of Algorithm~\ref{algoSample} satisfy
	\begin{align*}
		\mu'(p) \leq  \dot{\mu} \leq \mu'(p + \beta) + \gamma.
	\end{align*}
\end{prop}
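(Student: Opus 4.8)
\emph{Proof plan.}
The idea is to read Algorithm~\ref{algoSample} as an ordinary bisection that localizes a root of the nondecreasing ``distribution function'' $\lambda\mapsto\mu^{'-1}(\lambda)$, and to bound the Monte-Carlo error of the inner loop uniformly in $\lambda$. Write $q(\lambda):=\frac1Q\#\{\,i\le Q:\psi(\lambda,\varepsilon_i)\ge 0\,\}$ for the quantity accumulated in Lines~\ref{algoBinSearchInvStart}--\ref{algoBinSearchInvEnd}. By Lemma~\ref{lemInS}, $\psi(\lambda,\varepsilon_i)\ge 0$ is equivalent to $\varepsilon_i\in S(\lambda,\cY;\Sigma)$, so $q(\lambda)$ is the empirical frequency of this event; by Lemma~\ref{lemAlgo}(ii) its expectation is $\mu^{'-1}(\lambda)$ and its summands lie in $[0,1]$.

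\textbf{Step 1 (uniform concentration).} Let $E$ be the event that $|q(\lambda)-\mu^{'-1}(\lambda)|\le\beta/2$ for every $\lambda$ that the while loop evaluates. The one subtlety is that these $\lambda$ are produced by the algorithm and hence depend on $\varepsilon_1,\dots,\varepsilon_Q$, so a per-query union bound would be circular. This is removed by monotonicity: as noted after Lemma~\ref{lemInS}, $\lambda\mapsto\psi(\lambda,\varepsilon)$ is nondecreasing, hence $\lambda\mapsto q(\lambda)$ is a nondecreasing step function (with at most $Q$ jumps) and $\lambda\mapsto\mu^{'-1}(\lambda)$ is nondecreasing as well; for two nondecreasing $[0,1]$-valued functions it suffices to control the discrepancy at an $O(1/\beta)$-size grid of probability levels (the abscissae where $\mu^{'-1}$ first crosses each level). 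Hoeffding's inequality (Lemma~\ref{lemHoeffding}) at each grid point, combined with a union bound and the choice of $Q$ in \eqref{defNumSample}, yields $\Prob(E)\ge 1-\alpha$.

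\textbf{Step 2 (bisection invariant).} Definition~\ref{dfMu} together with the monotonicity of $\mu^{'-1}$ gives the quantile implications $\mu^{'-1}(\lambda)\ge p\Rightarrow\lambda\ge\mu'(p)$ and $\mu^{'-1}(\lambda)<p+\beta\Rightarrow\lambda\le\mu'(p+\beta)$. Working on $E$, the plan is to show by induction that the loop preserves $\underline{\lambda}\le\mu'(p+\beta)$ and $\overline{\lambda}\ge\mu'(p)$. Initialization uses Lemma~\ref{lemAlgo}(i) and Lemma~\ref{lemMonotonicity}(i): $\underline{\lambda}=\chi_1^{-1}(p)\le\mu'(p)\le\mu'(p+\beta)$ and $\overline{\lambda}=\chi_d^{-1}(p)\ge\mu'(p)$. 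Inductive step: at a midpoint $\lambda$, the update that lowers $\overline{\lambda}$ to $\lambda$ fires exactly when the empirical count certifies $q(\lambda)\ge p+\beta/2$, so on $E$ we get $\mu^{'-1}(\lambda)\ge q(\lambda)-\beta/2\ge p$ and hence $\lambda\ge\mu'(p)$; the update that raises $\underline{\lambda}$ to $\lambda$ fires otherwise, so on $E$ we get $\mu^{'-1}(\lambda)\le q(\lambda)+\beta/2<p+\beta$ and hence $\lambda\le\mu'(p+\beta)$. The endpoint left untouched in that iteration is preserved trivially.

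\textbf{Step 3 (termination and conclusion).} Each iteration halves $\overline{\lambda}-\underline{\lambda}$, which starts at the finite value $\chi_d^{-1}(p)-\chi_1^{-1}(p)$, so the loop halts with $\overline{\lambda}-\underline{\lambda}<\gamma$. On $E$ the invariant then gives $\mu'(p)\le\overline{\lambda}=\dot{\mu}$ and $\dot{\mu}=\overline{\lambda}<\underline{\lambda}+\gamma\le\mu'(p+\beta)+\gamma$; since $\Prob(E)\ge 1-\alpha$, the proposition follows. I expect the only real work to be in Step~1 --- obtaining the concentration bound simultaneously at the data-dependent query points --- which the monotonicity of $q(\cdot)$ and $\mu^{'-1}(\cdot)$ reduces to control on a fixed grid whose size depends only on $\beta$; the quantile implications, the bisection bookkeeping in Step~2, and termination are all routine.
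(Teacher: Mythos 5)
Your Steps 2--3 are essentially sound (note, though, that you read the branch test of Algorithm~\ref{algoSample} with the roles of $\underline{\lambda}$ and $\overline{\lambda}$ interchanged relative to the printed listing; that reading is the one under which the proposition is true, and it is also what the paper's own proof implicitly assumes). The genuine gap is in Step 1, which you yourself flag as the real work. Uniform control of $|q(\lambda)-\mu^{'-1}(\lambda)|\le\beta/2$ at all queried $\lambda$ via a grid of quantile levels requires a per-grid-point deviation of about $\beta/4$ and a union bound over $O(1/\beta)$ points, hence per-point failure probability of order $\alpha\beta$; Hoeffding then forces $Q\gtrsim\beta^{-2}\log\bigl(1/(\alpha\beta)\bigr)$. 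The value fixed in \eqref{defNumSample}, $Q=\lceil\log(2/\alpha)/(8\beta^2)\rceil$, carries no $\log(1/\beta)$ term (indeed, with this $Q$ even a single Hoeffding bound at deviation $\beta/4$ only yields failure probability $(\alpha/2)^{1/64}$), so your claim that this $Q$ gives $\Prob(E)\ge1-\alpha$ is unsubstantiated and fails for small $\beta$. (The constant in \eqref{defNumSample} looks too small even for a two-point argument, but that is a constant-factor issue in the paper; your grid route needs an extra logarithmic factor that no constant adjustment supplies.)

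The paper avoids uniform concentration entirely, and this is where your decomposition differs. It defines the empirical quantile $\dot{\mu}(p+\beta/2):=\inf\{\mu\ge0 : |\{i:\psi(\mu,\varepsilon_i)\ge0\}|\ge(p+\beta/2)Q\}$, observes that the bisection output deterministically satisfies $\dot{\mu}(p+\beta/2)\le\overline{\lambda}\le\dot{\mu}(p+\beta/2)+\gamma$, and then applies Hoeffding only twice --- once to show $\mu'(p)\le\dot{\mu}(p+\beta/2)$ and once to show $\dot{\mu}(p+\beta/2)\le\mu'(p+\beta)$, each with failure probability $\alpha/2$ --- so there is no union bound over data-dependent query points at all. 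Your argument is repairable in the same spirit: by monotonicity of $q$ and of $\mu^{'-1}$, one-sided Hoeffding bounds at the two fixed thresholds $\mu'(p)$ and $\mu'(p+\beta)$ already give, simultaneously for every midpoint the loop can query, the two implications your inductive step uses (any $\lambda$ at which the count certifies $q(\lambda)\ge p+\beta/2$ must satisfy $\lambda\ge\mu'(p)$, and any $\lambda$ with $q(\lambda)<p+\beta/2$ must satisfy $\lambda\le\mu'(p+\beta)$, up to the usual care with ties at the threshold). With Step 1 replaced by these two fixed-point bounds, your bisection invariant and conclusion go through within the stated sample budget, and the proof collapses to essentially the paper's.
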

The computational tractability of Algorithm~\ref{algoSample} relies heavily on
the choice of $\cY$,
and this is discussed in the next section, which utilizes Algorithm~\ref{algoSample}.

\section{Empirical Domain Reduction}\label{secReduction}
%
This section presents an algorithm that achieves our main theoretical result
via the concept of subspace selection discussed in Section~\ref{subsecApproach}.
\subsection{Two-step domain reduction algorithm}
%
Let us define constants $\alpha_n, \beta_n, \gamma_n, \eta_n, \delta_n >0$ and empirically reduced domain $\hat{\cY}(w, \mu)$ for $w,\mu \geq 0$ by
\begin{align*}
	\alpha_n = \frac{\delta}{\sqrt{n}}, \quad  \beta_n = \frac{c_d \tau }{\sqrt{n}}, \quad \gamma_n = \frac{1}{\sqrt{n}},\\
	\eta_n := \eta_{n,d}\left(\frac{d^2 c_1 \tau'}{\sqrt{n}} \right),\\
	\delta_n :=  \frac{1}{\sqrt{n}}\left(2 d^2 c_1 \tau'  + 4c_d \tau  + \delta \right),\\
	\hat{\cY}(w, \mu) := \left\{y \in \cX \left| 
	\begin{array}{l}
		f(y) \leq w,\\
		g_k (y, \hat{\theta}) \geq - \mu r_k (y; \hat{\Sigma}) , \\
		k=1,2,\dots,K 
	\end{array}
	\right\} \right. .
\end{align*}
We can then propose Algorithm~\ref{algoReduction} for the calculation of the scale $\hat{\lambda}$ of robustness.
Lines~\ref{algoRedstep1}--\ref{algoRedDom} represent the first stage of this algorithm,
which conducts subspace selection.
Line~\ref{algoRedstep1} first calculates coefficient $\dot{\mu}$ using Algorithm~\ref{algoSample} with $\cY=\cX$.
Line~\ref{algoRedMuScale} defines $\hat{\mu}$ by scaling $\dot{\mu}$,
and then Line~\ref{algoRedsolve} calculates optimum value $\hat{w} = f(\hat{x}(3 \hat{\mu}))$ by solving \eqref{robustOpt} on the basis of scale $\hat{\mu}$.
Line~\ref{algoRedDom} conduct subspace selection by defining $\hat{\cY} = \hat{\cY}(\hat{w}, \hat{\mu} / (1 - \eta_n))$.
Lines~\ref{algoRedStep2}--\ref{algoRedOut} represent the second stage of the algorithm,
which calculates scale $\hat{\lambda}$ on the basis of the subspace $\hat{\cY}$.
Line~\ref{algoRedStep2} calculates $\dot{\lambda}$ using Algorithm~\ref{algoSample} with $\cY=\hat{\cY}$,
and Line~\ref{algoRedOut} then outputs $\hat{\lambda}$ which defined by scaling $\dot{\lambda}$.
\begin{algorithm}[t]
	\caption{Computation of empirical scale $\hat{\lambda}$ for minimizing $\VaR_{\alpha}$}\label{algoReduction}
	\begin{algorithmic}[1]
		\REQUIRE $\delta$, $\hat{\theta}_n$, $\hat{\Sigma}_n$
		\ENSURE $x \in \cX$
		\STATE Calculate $\dot{\mu}$ with Algorithm~\ref{algoSample} with inputs $p= 1 - 2c_d \tau /\sqrt{n}$, $\cY = \cX$, $\Sigma =  \hat{\Sigma}_n$, and $(\alpha, \beta, \gamma) = (\alpha_n, \beta_n, \gamma_n)$ \label{algoRedstep1}
		\STATE Define $\hat{\mu} = (1+\eta_n) \dot{\mu} / (1 - \eta_n)^2 \sqrt{n}$ \label{algoRedMuScale}
		\STATE Calculate $\hat{w} = f( \hat{x}( 3\hat{\mu} )  )$ \label{algoRedsolve}
		\STATE Define $\hat{\cY} := \hat{\cY}(\hat{w}, \hat{\mu} / (1 - \eta_n))$ \label{algoRedDom}
		\STATE Calculate $\dot{\lambda}$ with Algorithm~\ref{algoSample} with inputs $p= 1 - \delta + \delta_n$, $\cY = \hat{\cY}$, $\Sigma = \hat{\Sigma}_n$, and  $(\alpha, \beta, \gamma) = (\alpha_n, \beta_n, \gamma_n)$ \label{algoRedStep2}
		\STATE Output $\hat{\lambda}:= \dot{\lambda}/ (1-\gamma_n) \sqrt{n}$. \label{algoRedOut}
	\end{algorithmic}
\end{algorithm}
\subsection{Theoretical analysis}
Let $n_0$ be the minimum integer $n$ that satisfies
\begin{align*}
	\frac{(1 + \eta_n)^2}{( 1 - \eta_n)^2} \mu(1 - \delta + \delta_n, \cX, \cN_{\Sigma^*}, \Sigma^*)  + \frac{1 + \eta_n}{ (1 - \eta_n) \sqrt{n} } 
	\leq \mu(1 - 2c_d \tau /\sqrt{n}, \cX, \cN_{\Sigma^*}, \Sigma^*)
\end{align*}
Note that such $n_0$ must exist since $\delta_n, \eta_n = O(1/\sqrt{n})$.
Our main theoretical result can then be given as follows:
\begin{thm}\label{thmFormal}
	Suppose that $n \geq n_0$,
	and $\lim_{\varepsilon \to +0} \cY^*(f(x^*(0)) + \varepsilon, \varepsilon) = \{x^*(0)\}$.
	Then the output $\hat{\lambda}$ of Algorithm~\ref{algoReduction}
	satisfies
	\begin{align}
		\VaR_{\delta}[f^*(\hat{x}_n(\hat{\lambda}))] \leq f(x^*(\overline{\lambda}_n)) \label{ineqThmMain1}
	\end{align}
	where
	\begin{align}
		\lim_{n \to \infty} \sqrt{n}  \overline{\lambda}_n \leq   \chi_{1}^{-1}(1 - \delta/K). \label{ineqThmMain2}
	\end{align}
\end{thm}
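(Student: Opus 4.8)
The plan is to reduce \eqref{ineqThmMain1} to one application of the subspace inequality Lemma~\ref{lemConf} on a suitable $(1-\delta)$‑probability event, and then to obtain \eqref{ineqThmMain2} by letting the empirically reduced domain $\hat\cY$ collapse to $\{x^*(0)\}$. First I would fix a \emph{good event} $E$ with $\Prob(E)\ge 1-\delta$ as the intersection of four events: (i) the covariance sandwich $(1-\eta_n)\Sigma^*\preceq\hat\Sigma_n\preceq(1+\eta_n)\Sigma^*$ of Lemma~\ref{lemCovEst} taken with $\delta'=d^2c_1\tau'/\sqrt n$; (ii) the conclusion of Proposition~\ref{propSampleAlgo} for both calls to Algorithm~\ref{algoSample} (each failing with probability at most $\alpha_n$), so that $\dot\mu\ge\mu(1-2c_d\tau/\sqrt n,\cX;\cN_{\hat\Sigma_n},\hat\Sigma_n)$ and $\dot\lambda\ge\mu(1-\delta+\delta_n,\hat\cY;\cN_{\hat\Sigma_n},\hat\Sigma_n)$; (iii) $\hat\theta_n-\theta^*\in S(\hat\mu,\cX;\hat\Sigma_n)$; and (iv) $\hat\theta_n-\theta^*\in S(\hat\lambda,\hat\cY;\hat\Sigma_n)$. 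Events (iii) and (iv) are the substantive ones: starting from (ii), Proposition~\ref{propNormalApprox} turns the normal spatial bounds into bounds for the error distribution $\cP^*_n$, the Berry--Esseen bound of Lemma~\ref{lemParaEst} (applicable because $S$ is convex by Lemma~\ref{lemSConvex}) controls the gap between $\cP^*_n$ and $\cN_{\Sigma^*/n}$, and the sandwich together with Lemma~\ref{lemMonotonicity}(iii),(iv) controls the gap between $\cN_{\Sigma^*}$ and $\cN_{\hat\Sigma_n}$; the rescalings $\hat\mu=(1+\eta_n)\dot\mu/((1-\eta_n)^2\sqrt n)$ and $\hat\lambda=\dot\lambda/((1-\gamma_n)\sqrt n)$ and the slack $\delta_n=(2d^2c_1\tau'+4c_d\tau+\delta)/\sqrt n$ are calibrated precisely so that the net probabilities of (iii) and (iv) are $\ge 1-o(1)$ and $\ge 1-\delta$ respectively and the total failure probability of $E$ stays $\le\delta$.

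Second, on $E$ I would apply Lemma~\ref{lemConf} with $(\theta,\Sigma,\cY,\lambda)=(\hat\theta_n,\hat\Sigma_n,\hat\cY,\hat\lambda)$: hypothesis \eqref{sufCond1} is exactly (iv), and for \eqref{sufCond2} I must show $\hat x_n(\hat\lambda)\in\hat\cY$ and $x(2\hat\lambda;\theta^*,\hat\Sigma_n)\in\hat\cY$. Both follow from $n\ge n_0$ (which forces $3\hat\mu$ to dominate the relevant second‑stage scales) combined with (iii): for the first point, $\hat x_n(\hat\lambda)$ meets the almost‑feasibility constraint of $\hat\cY$ trivially because $g_k(\hat x_n(\hat\lambda),\hat\theta_n)\ge\hat\lambda r_k(\cdot;\hat\Sigma_n)\ge 0$, and $f(\hat x_n(\hat\lambda))\le f(\hat x_n(3\hat\mu))=\hat w$ because the scale‑$\hat\lambda$ feasible set contains the scale‑$3\hat\mu$ one; for the second, (iii) shows $\hat x_n(3\hat\mu)$ is feasible for the scale‑$2\hat\lambda$ problem centred at $\theta^*$ (one unit of $\hat\mu$‑margin is consumed by $\hat\theta_n-\theta^*$, leaving $2\hat\mu\ge 2\hat\lambda$), giving $f(x(2\hat\lambda;\theta^*,\hat\Sigma_n))\le\hat w$, while its almost‑feasibility constraint follows again from (iii) and the sandwich. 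Lemma~\ref{lemConf} then yields $f^*(\hat x_n(\hat\lambda))\le f^*(x(2\hat\lambda;\theta^*,\hat\Sigma_n))$; since $0\in 2\hat\lambda U_{\hat\Sigma_n}$ the right point is true‑feasible so $f^*$ there equals $f$; and $U_{\hat\Sigma_n}\subseteq\sqrt{1+\eta_n}\,U_{\Sigma^*}$ shows $x^*(2\sqrt{1+\eta_n}\,\hat\lambda)$ is feasible for the scale‑$2\hat\lambda$, $\theta^*$, $\hat\Sigma_n$ problem, whence $f(x(2\hat\lambda;\theta^*,\hat\Sigma_n))\le f(x^*(2\sqrt{1+\eta_n}\,\hat\lambda))$. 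Setting $\overline\lambda_n:=2\sqrt{1+\eta_n}\,\hat\lambda$ (or the minor variant the precise normalisation requires) gives \eqref{ineqThmMain1}.

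Third, for \eqref{ineqThmMain2} I would pass to the limit. Since $\eta_n,\delta_n,\alpha_n,\beta_n,\gamma_n\to0$, $\hat\Sigma_n\to\Sigma^*$ and $\hat\mu\to0$, we get $\hat w=f(\hat x_n(3\hat\mu))\to f(x^*(0))$ and $\hat\cY=\hat\cY(\hat w,\hat\mu/(1-\eta_n))$ shrinks down to $\cY^*(f(x^*(0)),0)$; the hypothesis $\lim_{\varepsilon\to+0}\cY^*(f(x^*(0))+\varepsilon,\varepsilon)=\{x^*(0)\}$ forces $\hat\cY\to\{x^*(0)\}$. By Proposition~\ref{propSampleAlgo}, $\sqrt n\,\hat\lambda=\dot\lambda/(1-\gamma_n)\le(\mu(1-\delta+\delta_n+\beta_n,\hat\cY;\cN_{\hat\Sigma_n},\hat\Sigma_n)+\gamma_n)/(1-\gamma_n)$, and by continuity of $\mu$ in its arguments the limit superior of the right side is $\mu(1-\delta,\{x^*(0)\};\cN_{\Sigma^*},\Sigma^*)$. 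Finally $S(\mu,\{x^*(0)\};\Sigma^*)=\{\Delta\in\bR^d:|v_k(x^*(0))^\top\Delta|\le\mu\sqrt{v_k(x^*(0))^\top\Sigma^* v_k(x^*(0))},\ k=1,\dots,K\}$, and for $\Delta\sim\cN(0,\Sigma^*)$ each normalised coordinate $v_k(x^*(0))^\top\Delta/\sqrt{v_k(x^*(0))^\top\Sigma^* v_k(x^*(0))}$ is standard normal, so a union bound over $k$ shows $\Prob(\Delta\in S(\chi_1^{-1}(1-\delta/K),\{x^*(0)\};\Sigma^*))\ge1-\delta$, i.e.\ $\mu(1-\delta,\{x^*(0)\};\cN_{\Sigma^*},\Sigma^*)\le\chi_1^{-1}(1-\delta/K)$. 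Chasing the $(1+\eta_n)^{1/2}$ and $(1-\gamma_n)^{-1}$ factors through then gives $\limsup_n\sqrt n\,\overline\lambda_n\le\chi_1^{-1}(1-\delta/K)$, which is \eqref{ineqThmMain2}.

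The main obstacle is the joint randomness in step~(iv) and the verification of \eqref{sufCond2}: $\hat\cY$ and $\hat\Sigma_n$ are themselves functions of the sample and occur inside $S(\hat\lambda,\hat\cY;\hat\Sigma_n)$, inside $\hat x_n(\hat\lambda)$, and inside the computed $\hat\lambda$, so one cannot assert $\Prob(\hat\theta_n-\theta^*\in S(\hat\lambda,\hat\cY;\hat\Sigma_n))\ge1-\delta$ naively. This is why the algorithm is two‑stage: the first stage fixes a conservative scale $\hat\mu$ (of the standard $O(\sqrt{d/n})$ size) whose sole purpose is to make $\hat\cY$ \emph{simultaneously} small enough that $S(\hat\lambda,\hat\cY;\cdot)$ is large — indeed contained, with high probability, in a deterministic domain for which Proposition~\ref{propNormalApprox} can be invoked — and large enough to contain both solutions in \eqref{sufCond2}; the constants $\eta_n$, $\delta_n$, the factor $3$ in $f(\hat x_n(3\hat\mu))$, and the threshold $n_0$ are exactly the quantitative room that makes these two demands compatible. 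Carrying out this bookkeeping precisely — in particular propagating the covariance sandwich through $\mu(\cdot,\cdot;\cN_{\hat\Sigma_n},\hat\Sigma_n)$ versus $\mu(\cdot,\cdot;\cN_{\Sigma^*},\Sigma^*)$ via Lemma~\ref{lemMonotonicity} and through $S(\cdot,\cdot;\hat\Sigma_n)$ versus $S(\cdot,\cdot;\Sigma^*)$, and checking that no correction term leaks out of the $\delta$‑budget — is the technical heart of the proof.
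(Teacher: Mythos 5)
Your overall strategy is the paper's (good event, one application of Lemma~\ref{lemConf}, then a limit via the shrinking domain and a union bound over the $K$ one-dimensional Gaussian constraints at $x^*(0)$, giving $\chi_1^{-1}(1-\delta/K)$), but there is a genuine gap at exactly the point you yourself flag as the ``technical heart'': you never resolve it. Your good event (iv) is $\hat\theta_n-\theta^*\in S(\hat\lambda,\hat\cY;\hat\Sigma_n)$ with \emph{all three} of $\hat\lambda$, $\hat\cY$, $\hat\Sigma_n$ random and built from the same sample as $\hat\theta_n$, and you then apply Lemma~\ref{lemConf} with $\cY=\hat\cY$, declaring that the slacks $\delta_n,\beta_n,\gamma_n$ are ``calibrated precisely'' so that (iv) has probability at least $1-\delta+o(1)$. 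That is precisely the naive assertion you concede cannot be made: Definition~\ref{dfMu} and Propositions~\ref{propNormalApprox} and~\ref{propSampleAlgo} only quantify probabilities for a \emph{fixed} domain and covariance, so no choice of constants alone closes this circle. The paper closes it by a derandomization you never carry out: Lemma~\ref{lemConf2} (with $\kappa=3$, $\mu=\hat\mu$ --- this is the actual reason for the factor $3$) sandwiches the estimated value $\hat w$ between $f(x^*(2\underline{\mu}_n'))$ and $f(x^*(4(1+\eta_n)\overline{\mu}_n/(1-\eta_n)))$, which yields \emph{deterministic} domains $\underline{\cY}_n\subseteq\hat\cY\subseteq\overline{\cY}_n$; the probability statement is then made for the deterministic event $\hat\theta_n-\theta^*\in S(\underline{\lambda}_n,\underline{\cY}_n;\Sigma^*)$, Lemma~\ref{lemConf} is applied with the deterministic $\underline{\cY}_n$ (membership of $\hat x_n(\hat\lambda)$ and $x(2\hat\lambda;\theta^*,\hat\Sigma_n)$ in $\underline{\cY}_n$ is where $n\ge n_0$ enters, forcing $\hat\lambda\le\underline{\mu}_n'$), and $\hat\lambda\ge\underline{\lambda}_n$ transfers the conclusion. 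You do not use Lemma~\ref{lemConf2} at all, and without the two-sided control of $\hat w$ you have neither the inner domain needed to legitimize the probability claim nor the outer domain needed for the limit.

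A second, related defect: you set $\overline\lambda_n:=2\sqrt{1+\eta_n}\,\hat\lambda$, a sample-dependent quantity. The theorem needs $\overline\lambda_n$ to be a deterministic sequence, since $\VaR_\delta[f^*(\hat x_n(\hat\lambda))]$ is a number and \eqref{ineqThmMain2} is a deterministic limit; with a random right-hand side, \eqref{ineqThmMain1}--\eqref{ineqThmMain2} are not even well-posed. In the paper, $\overline\lambda_n$ is defined through the deterministic outer domain, essentially $\overline\lambda_n\approx\frac{(1+\eta_n)^2}{(1-\eta_n)^2\sqrt n}\,\mu(1-\delta+\delta'+\varepsilon_2,\overline{\cY}_n;\cN_{\Sigma^*},\Sigma^*)$ plus an $O(1/n)$ term, and it is $\overline{\cY}_n\to\{x^*(0)\}$ (via the uniqueness hypothesis) that gives \eqref{ineqThmMain2}; your appeal to ``$\hat\cY\to\{x^*(0)\}$ and continuity of $\mu$'' again leans on convergence of random sets without the deterministic envelope that makes the argument rigorous. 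Your final computation $\mu(1-\delta,\{x^*(0)\};\cN_{\Sigma^*},\Sigma^*)\le\chi_1^{-1}(1-\delta/K)$ is correct and coincides with the paper's last step, but the bridge from the algorithm's random output to that deterministic quantity is the part that is missing.
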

Thus, under the uniqueness of the true optimum solution $x^*(0)$,
the estimated scale $\hat{\lambda}$ is asymptotically independent of the dimension $d$.

The essence of the proof of this statement lies in 
finding subspaces $\underline{\cY}_n, \overline{\cY}_n \subseteq \cX$
that satisfy the following two conditions:
(i) with high probability, it holds that 
\begin{align*}
	\hat{x}(\lambda_n), x^*(2\lambda_n) \in \underline{\cY}_n \subseteq \hat{\cY} \subseteq \overline{\cY}_n,
\end{align*}
and (ii) $\lim_{n \to \infty} \overline{\cY}_n = \{x^*(0) \}$.
Property (i), together with Lemma~\ref{lemConf},
implies contribution to proof of \eqref{ineqThmMain1},
and the property (ii) implies asymptotic convergence \eqref{ineqThmMain2}.
Since $\hat{\cY}$ is controlled by the estimated optimum value $\hat{w}$,
for the existence of both $\underline{\cY}_n$ and $\overline{\cY}_n$,
we need to control the range of estimated optimum value $\hat{w}$.
We utilize the following lemma for this control:
\begin{lem}\label{lemConf2}
	Let $\mu \geq 0$ and $\kappa \geq 1$. Suppose that $\theta \in \bR^d$, $\Sigma \succeq O$, and $\cY \subseteq \bR^d$ satisfy
	\begin{align*}
		\theta - \theta^* \in S(\mu, \cY; \Sigma),  \\
		x( (\kappa-1) \mu ; \theta^*, \Sigma ), x(\kappa \mu; \theta, \Sigma), x((\kappa + 1) \mu; \theta^*, \Sigma) \in \cY. 
	\end{align*}
	It then holds that $f^*(x((\kappa-1) \mu; \theta^*, \Sigma) ) \leq  f^*(x(\kappa \mu; \theta, \Sigma) ) \leq f^*(x((\kappa + 1)\mu; \theta^*, \Sigma))$.
\end{lem}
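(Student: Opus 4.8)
Write $\Delta:=\theta-\theta^*$. The plan rests on two elementary observations. First, since each $g_k(x,\theta)=\theta^\top v_k(x)$ is linear in $\theta$ and $U_\Sigma$ is symmetric about the origin, $\min_{u\in U_\Sigma}g_k(x,\theta+\lambda u)=g_k(x,\theta)-\lambda\,r_k(x;\Sigma)$ with $r_k(x;\Sigma)=\max_{u\in U_\Sigma}g_k(x,u)\ge 0$; hence, for any scale $\lambda\ge 0$ and nominal vector $\vartheta$, a point $x\in\cX$ is feasible for the problem whose optimum is $x(\lambda;\vartheta,\Sigma)$ exactly when $g_k(x,\vartheta)\ge\lambda\,r_k(x;\Sigma)$ for all $k$, whereas $f^*$-feasibility of $x$ means $g_k(x,\theta^*)\ge 0$ for all $k$, and $f^*\ge f$ pointwise. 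Second, $\Delta\in S(\mu,\cY;\Sigma)$ means precisely that $|g_k(y,\Delta)|\le\mu\,r_k(y;\Sigma)$ for every $y\in\cY$ and every $k$; and by hypothesis the three points $x_-:=x((\kappa-1)\mu;\theta^*,\Sigma)$, $x_0:=x(\kappa\mu;\theta,\Sigma)$, $x_+:=x((\kappa+1)\mu;\theta^*,\Sigma)$ all lie in $\cY$, so this two-sided bound is available at each of them. The argument is then the same ``feasibility-transfer plus optimality'' chain that underlies Lemma~\ref{lemConf}, of which this lemma is the $\kappa\ge 1$ strengthening: transferring robust feasibility between the nominal points $\theta$ and $\theta^*$ costs a shift of $\pm\mu$ in the scale, and $\Delta\in S(\mu,\cY;\Sigma)$ is exactly what pays for that shift at each of the three points.

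The pivotal step, which I would carry out first, is to show that $x_0$ is $f^*$-feasible. Since $x_0$ is feasible for the problem it solves, $g_k(x_0,\theta)\ge\kappa\mu\,r_k(x_0;\Sigma)$; subtracting $g_k(x_0,\Delta)\le\mu\,r_k(x_0;\Sigma)$ (available because $x_0\in\cY$) and using linearity $g_k(x_0,\theta)=g_k(x_0,\theta^*)+g_k(x_0,\Delta)$ gives
\begin{align*}
	g_k(x_0,\theta^*) \ge (\kappa-1)\mu\,r_k(x_0;\Sigma) \ge 0,\qquad k=1,\dots,K,
\end{align*}
the last inequality using $\kappa\ge 1$, $\mu\ge 0$, $r_k\ge 0$. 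Thus $f^*(x_0)=f(x_0)$, and the same display says exactly that $x_0$ is robustly feasible for the problem defining $x_-$ (scale $(\kappa-1)\mu$, nominal $\theta^*$), so optimality of $x_-$ yields $f(x_-)\le f(x_0)$. Since $x_-$ is robustly feasible at the nonnegative scale $(\kappa-1)\mu$ with nominal $\theta^*$, it is $f^*$-feasible, so $f^*(x_-)=f(x_-)\le f(x_0)=f^*(x_0)$, which is the lower inequality. For the upper inequality I may assume $f^*(x_+)<\infty$, that is $g_k(x_+,\theta^*)\ge(\kappa+1)\mu\,r_k(x_+;\Sigma)$; adding $g_k(x_+,\Delta)\ge-\mu\,r_k(x_+;\Sigma)$ (available because $x_+\in\cY$) yields $g_k(x_+,\theta)\ge\kappa\mu\,r_k(x_+;\Sigma)$, so $x_+$ is feasible for the problem defining $x_0$, and optimality of $x_0$ gives $f(x_0)\le f(x_+)$. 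Combined with $f^*(x_0)=f(x_0)$ and $f(x_+)\le f^*(x_+)$, this is $f^*(x_0)\le f^*(x_+)$.

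The computations are routine; the one place needing care --- and the main, if mild, obstacle --- is the bookkeeping of the directions of the $\pm\mu$ shifts and of the inequality signs. Two points deserve explicit attention: that each of $x_-,x_0,x_+$ genuinely lies in $\cY$, so that the two-sided estimate $|g_k(\cdot,\Delta)|\le\mu\,r_k(\cdot;\Sigma)$ is legitimate there (this is the entire purpose of the second hypothesis); and that $\kappa\ge 1$ is used twice --- once so that the lowest problem in the chain has a nonnegative scale $(\kappa-1)\mu$, whose feasible set is therefore contained in the $f^*$-feasible set, and once so that the displayed lower bound on $g_k(x_0,\theta^*)$ is itself nonnegative. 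If $\kappa<1$ were allowed, both the $f^*$-feasibility of $x_0$ deduced through $\theta^*$ and the lower inequality would break down. Beyond linearity of $g_k$ in $\theta$, the definition of $r_k$ as a support function, and optimality of the robust solutions, nothing deeper is needed.
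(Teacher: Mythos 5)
Your proof is correct and follows essentially the same route as the paper's: the paper writes the pointwise chain $\min_{u}g_k(y,\theta^*+(\kappa+1)\mu u)\le\min_{u}g_k(y,\theta+\kappa\mu u)\le\min_{u}g_k(y,\theta^*+(\kappa-1)\mu u)\le g_k(y,\theta^*)$ for all $y\in\cY$ and then invokes membership of the three solutions in $\cY$ together with optimality, which is exactly your feasibility-transfer-plus-optimality argument evaluated at $x_0$ and $x_+$. One small phrasing slip: the inequality $g_k(x_+,\theta^*)\ge(\kappa+1)\mu\,r_k(x_+;\Sigma)$ is not a restatement of $f^*(x_+)<\infty$ (which only gives $g_k(x_+,\theta^*)\ge 0$) but follows from $x_+$ being feasible for its own robust problem, as you yourself note in your opening paragraph, so the argument stands as intended.
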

Roughly speaking, we bound the range of $\hat{w}$ in probability
by applying Lemma~\ref{lemConf2} with $\kappa = 3$ and $\mu = \hat{\mu}$.

Let us conclude this section 
with a brief discussion of the computational tractability of our algorithms.
\begin{rmk}
	Algorithm~\ref{algoReduction} mainly consists of 
	an optimization~\eqref{algoRedsolve} in Line~\ref{algoRedsolve} 
	and two applications of Algorithm~\ref{algoReduction} in Lines~\ref{algoRedstep1} and~\ref{algoRedStep2}, where each application consists of
	$O(n \log^2 n)$ optimization of \eqref{optViolation}.
	If the original robust optimization~\eqref{algoRedsolve} is a convex programming problem,
	then \eqref{optViolation} with $\cY = \cX$ is a series of $K$ convex programmings.
	It is thus natural to assume that Lines~\ref{algoRedstep1} and~\ref{algoRedsolve} are computationally tractable.
	
	For the second application of Algorithm~\ref{algoSample}, in Line~\ref{algoRedStep2},
	note that the subspace $\cY = \hat{\cY}$ will not generally be convex
	because of the concavity of $-r_k$.
	In practical implementation,
	we can replace $r_k$ in the definition of $\hat{\cY}(w, \mu)$
	by an upper-bound $u_k$ that makes $\hat{\cY}$ convex.
	If such a $u_k$ can be bounded above as $u_k(x ; \Sigma) \leq \kappa r_k(x ; \Sigma) $ with some constant $\kappa \geq 1$, 
	then the resulting output $\hat{\lambda}$ of Algorithm~\ref{algoReduction}
	will also satisfy the guarantee in Theorem~\ref{thmFormal}.
	
	Let us introduce a simple example of such an upper-bound $u_k$.
	Suppose that $\cX$ is the positive orthant ($\cX = \bR^d_{+}$) and that 
	$g_k(x, \theta) = \theta^{\top} x$.
	Then $r_k(x, I_d) = \|x \|_2$, where $\| \cdot \|_q$ is the $\ell_q$-norm for $q \geq 0$.
	It then holds that $r_k(x, I_d) \leq \sqrt{d} \|x \|_1$.
	Observe that $\| \cdot \|_1$ is linear on the positive orthant $\cX$,
	and thus this $u_k$ is computationally tractable.
\end{rmk}

\section{Experiments}

\begin{figure*}[t]
	\begin{minipage}[t]{0.485\hsize}
		\begin{center}
			\includegraphics[width=\hsize]{./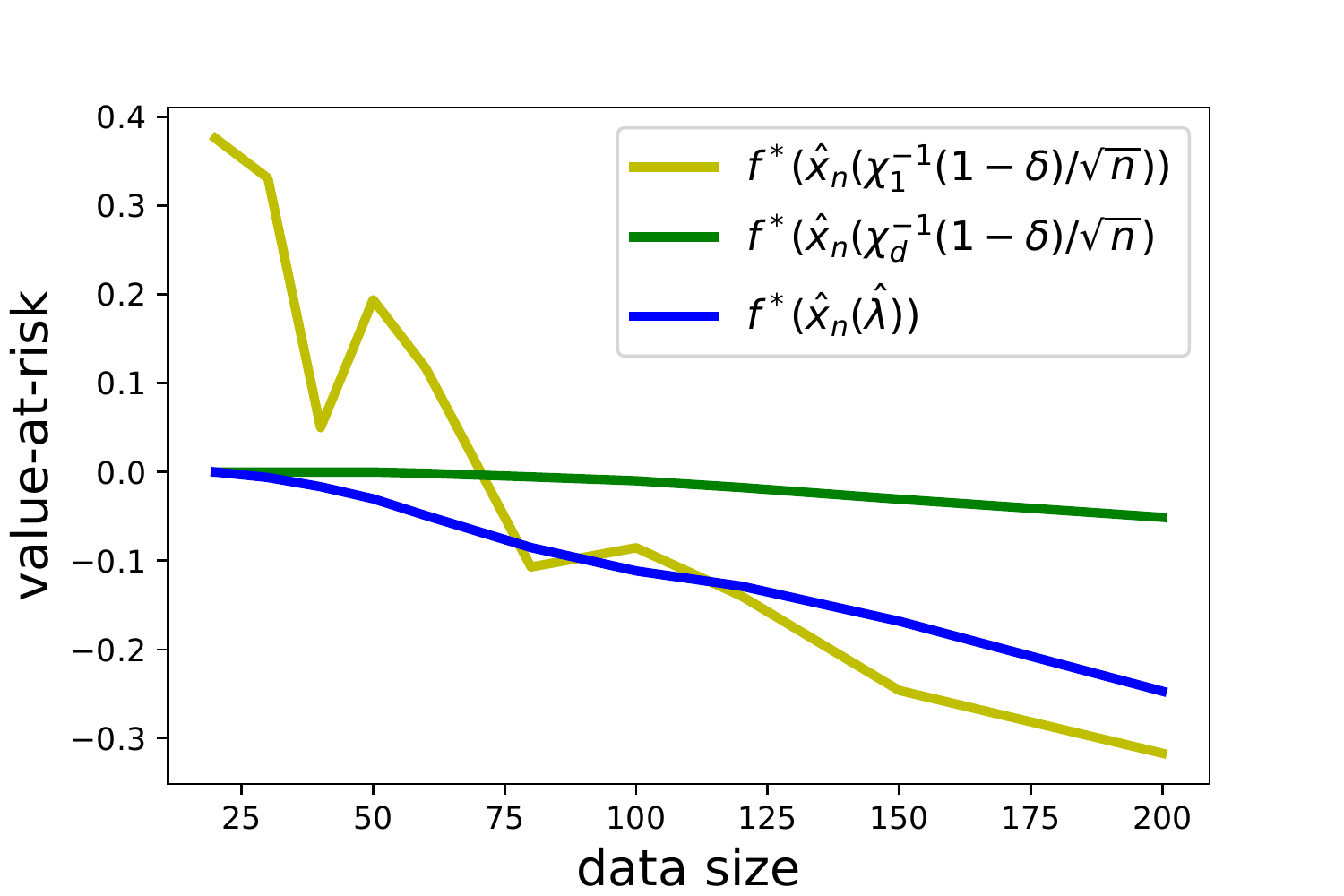}
			\caption{Value-at-risk $\VaR[f^*(\hat{x}_n(\lambda))]$ for scale $\lambda$. The vertical line shows the value-at-risk, and the horizontal line shows the number of samples. Respective yellow, green, blue lines show the result with $\lambda = \chi_1^{-1}(1-\delta)$, $\chi_d^{-1}(1-\delta)$, and $\hat{\lambda}_n$.} \label{figGenError}
		\end{center}
	\end{minipage}
	\hspace{0.02\hsize}
	\begin{minipage}[t]{0.485\hsize}
		\begin{center}
			\includegraphics[width=\hsize]{./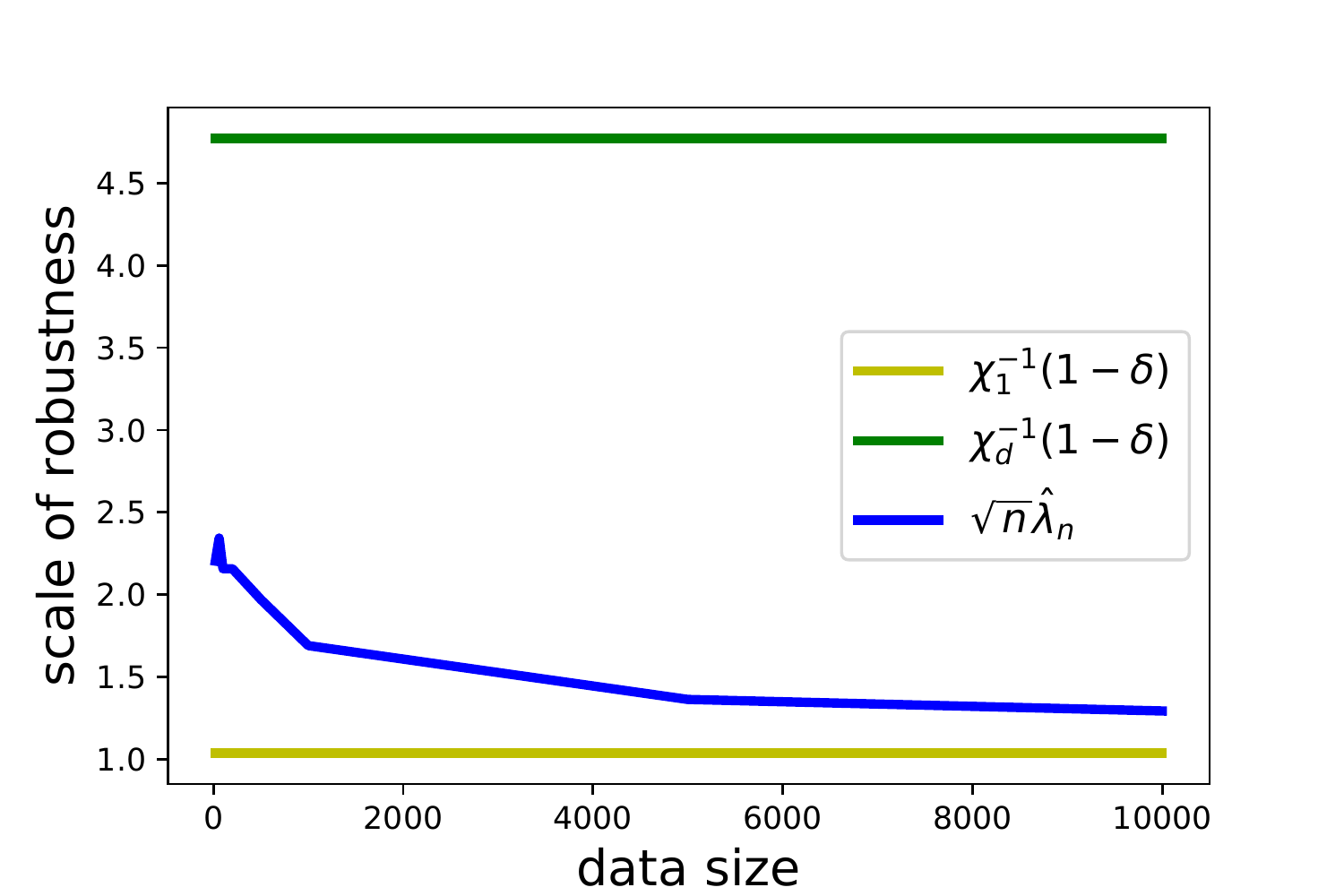}
			\caption{Scale of robustness $\lambda$. Vertical line shows the value of scale of robustness $\lambda$, and the horizontal line shows the number of samples. Respective yellow, green, blue lines indicate the value of $\lambda = \chi_1^{-1}(1-\delta)$, $\chi_d^{-1}(1-\delta)$, and $\sqrt{n}\hat{\lambda}_n$.}\label{figRegScale}
		\end{center}
	\end{minipage}
\end{figure*}

\subsection{Experimental setting}
Let us consider the following simple portfolio optimization problem, which has been examined by several existing studies in robust optimization~\cite{bertsimas2004price,ben1999robust}.
Suppose that there are $d$ items whose costs are $\theta_1, \dots, \theta_d$, which are uncertain parameters.
The task is to find a convex combination (portfolio) that minimizes the cost:
\begin{align}
	\begin{array}{ll}
		& \displaystyle \min \sum_{i=1}^d \theta_i x_i \\
		& \text{s.t. } \displaystyle \sum_{i=1}^d x_i \leq 1, \quad 
		x_i \geq 0, \ \ i=1,\dots,d.
	\end{array} \label{portfolioOpt} 
\end{align}
This problem has an uncertain objective function.
Thus, we first translate this problem into the our setting~\eqref{nominalOpt}.
Let us introduce an additional variable $x_0$.
We denote by $x$ the set of variables $x=(x_0,x_1,\dots,x_d)$ over $\bR^{d+1}$.
We also introduce a dummy parameter $\theta_0 = 1$.
We define $\cX$,
$f$, and $g$ by
\begin{align*}
	\cX &:= \left\{x \in \bR^{d+1}
	\left|
	\begin{array}{l}
		\sum_{i=1}^d x_i \leq 1  \\
		x_i \geq 0, \quad i=1,2,\dots,d.
	\end{array}
	\right\}, \right. \\
	f(x) &:= x_0,\\
	g(x, \theta) &:= \theta_0 x_0 - \sum_{i=1}^d \theta_i x_i.
\end{align*}
Then the portfolio optimization problem~\eqref{portfolioOpt} is captured by our setting \eqref{nominalOpt}.

For this problem, we generate synthetic data as follows.
We fix $\delta=0.3$, $d = 20$, and $\theta^* = (-1, -0.9, -0.8,\dots,0.8,0.9)^{\top}$.
We set the covariance matrix $\Sigma^*$ as a diagonal matrix $\Sigma^* = \mathrm{diag}(\sigma_1^2, \sigma_2^2,\dots,\sigma_d^2)$
where $\sigma_i$ follows a uniform distribution over $[0,10]$.
For each sample size $n$ and true covariance matrix,
we generate $20$ sets of $n$ i.i.d. samples from  $\cN(\theta^*, \Sigma^*)$,
over which samples value-at-risk will be approximately calculated.
All our experimental results are average over $30$ generation of such covariance matrix $\Sigma^*$.
For a solution $\hat{x}$ that does not satisfy the true constraint,
we put $f^*(\hat{x}) = 1$ instead of $f^*(\hat{x}) = \infty$ for the purpose of clear visualization.

To make Algorithm~\ref{algoReduction} practical,
we applied the following approximations:
While we bound the gap of the probability mass of the true distribution of $\hat{\theta}_n - \theta^*$ and $\cN_{\Sigma^*/n}$
by $c_d \tau / \sqrt{n}$ in our theoretical analysis,
we simply replace the true distribution by its normal estimate $\cN_{\hat{\Sigma}_n/n}$ in implementation by putting $c_d = \tau = 0$.
Also, our theoretical analysis carefully replace the true covariance matrix $\Sigma^*$
by its probabilistic upper-bound $\hat{\Sigma}_n / (1 - \eta_n)$ or lower-bound $\hat{\Sigma}_n / (1 + \eta_n)$,
but we simply replace $\Sigma^*$ by $\hat{\Sigma}_n$ in implementation by putting $\tau' = 0$.
\subsection{Experimental results}
We solved the robust optimization problem~\eqref{robustOpt} for three different $\lambda$:
\begin{itemize}
	\item {}[proposed] $\lambda = \hat{\lambda}_n$ calculated by Algorithm~\ref{algoReduction},
	\item {}[lower-bound] $\lambda = \underline{\lambda} := \chi_1^{-1}(1 - \delta) / \sqrt{n}$. This is a lower-bound in terms of Lemma~\ref{lemAlgo}.
	\item {}[upper bound] $\lambda = \overline{\lambda} := \chi_d^{-1}(1 - \delta)/ \sqrt{n}$.
	This is the the scale adopted by the standard approach on the basis of Fact~\ref{factConf}.
\end{itemize}
Figure~\ref{figGenError} shows the value-at-risk $\VaR_{\delta}[f^*(\hat{x}(\lambda))]$ for each $\lambda$.
We observed that
\begin{itemize}[noitemsep,nolistsep,leftmargin=*]
	\item The lower-bound $\underline{\lambda}$ (yellow) showed worst performance with small sample size $n \leq 60$.
	This performance is due to the violation of true constraints with probability more than $\delta$.
	Although this $\underline{\lambda}$ showed the best performance with large sample size $n \geq 120$,
	this scale is inadequate for guaranteed optimization.
	\item The upper-bound $\overline{\lambda}$ (green) could always satisfy the true constraint
	with designed probability $1-\delta$, but showed the worst speed of convergence.
	This shows that $\overline{\lambda}$ is too conservative.
	\item The proposed method $\hat{\lambda}_n$ (blue) showed the best performance with wide range of sample size $20 \leq n \leq 120$.
	This can always satisfy the true constraint,
	and, at the same time, showed rather speedy convergence than $\overline{\lambda}$.
\end{itemize}

We next observe the convergence of the scale $\sqrt{n}\hat{\lambda}_n$ to $\chi_1^{-1}(1-\delta)$,
which is proven by Theorem~\ref{thmFormal}.
Figure~\ref{figRegScale} show the value $\sqrt{n}\hat{\lambda}_n$ (blue) 
in comparison with its lower-bound $\chi_1^{-1}(1-\delta)$ (yellow) and upper-bound $ \chi_d^{-1}(1-\delta)$ (green).
The scale $\sqrt{n}\hat{\lambda}_n$ is about $\chi_4^{-1}(1-\delta) \approx 2.2$ with small samples $n \leq 200$,
and this implies that 
our algorithm succeeded in selecting four dimensional subspace.
The scale $\sqrt{n}\hat{\lambda}_n$ decreased to $\chi_2^{-1}(1-\delta) \approx 1.55$ with $n = 1000$,
and then slowly converged to $\chi_1^{-1}(1- \delta)$.

\section{Concluding remarks}
This paper has shown that the scale of robustness required for achieving a given confidence probability
is dependent on the size of optimization domain,
and then has proposed an algorithm for deciding the scale tightly
on the basis of empirical domain reduction.
The analysis has proven that the scale is asymptotically independent of the parameter dimension,
and our experiments has demonstrated that
the proposed method can achieve better performance while maintaining probabilistic guarantee.

\bibliography{reference}

\begin{thebibliography}{10}

\bibitem{bartlett2002rademacher}
Peter~L Bartlett and Shahar Mendelson.
\newblock Rademacher and gaussian complexities: Risk bounds and structural
  results.
\newblock {\em Journal of Machine Learning Research}, 3(Nov):463--482, 2002.

\bibitem{ben2009robust}
A.~Ben-Tal, L.~El Ghaoui, and A.~Nemirovski.
\newblock {\em Robust optimization}.
\newblock Princeton University Press, 2009.

\bibitem{ben1999robust}
A.~Ben-Tal and A.~Nemirovski.
\newblock Robust solutions of uncertain linear programs.
\newblock {\em Operations research letters}, 25(1):1--13, 1999.

\bibitem{ben2013robust}
Aharon Ben-Tal, Dick Den~Hertog, Anja De~Waegenaere, Bertrand Melenberg, and
  Gijs Rennen.
\newblock Robust solutions of optimization problems affected by uncertain
  probabilities.
\newblock {\em Management Science}, 59(2):341--357, 2013.

\bibitem{bentkus2003dependence}
Vidmantas Bentkus.
\newblock On the dependence of the berry--esseen bound on dimension.
\newblock {\em Journal of Statistical Planning and Inference}, 113(2):385--402,
  2003.

\bibitem{bertsimas2011theory}
Dimitris Bertsimas, David~B Brown, and Constantine Caramanis.
\newblock Theory and applications of robust optimization.
\newblock {\em SIAM review}, 53(3):464--501, 2011.

\bibitem{bertsimas2018data}
Dimitris Bertsimas, Vishal Gupta, and Nathan Kallus.
\newblock Data-driven robust optimization.
\newblock {\em Mathematical Programming}, 167(2):235--292, 2018.

\bibitem{bertsimas2004price}
Dimitris Bertsimas and Melvyn Sim.
\newblock The price of robustness.
\newblock {\em Operations research}, 52(1):35--53, 2004.

\bibitem{boucheron2013concentration}
St{\'e}phane Boucheron, G{\'a}bor Lugosi, and Pascal Massart.
\newblock {\em Concentration inequalities: A nonasymptotic theory of
  independence}.
\newblock Oxford university press, 2013.

\bibitem{delage2010distributionally}
Erick Delage and Yinyu Ye.
\newblock Distributionally robust optimization under moment uncertainty with
  application to data-driven problems.
\newblock {\em Operations research}, 58(3):595--612, 2010.

\bibitem{golub1989matriz}
Gene~H Golub and Charles~F Van~Loan.
\newblock {\em Matriz computations}.
\newblock The Johns Hopkins University Press, Baltimore, USA, 1989.

\bibitem{korolev2010upper}
V~Yu Korolev and Irina~G Shevtsova.
\newblock On the upper bound for the absolute constant in the berry--esseen
  inequality.
\newblock {\em Theory of Probability \& Its Applications}, 54(4):638--658,
  2010.

\bibitem{maurer2009empirical}
A~Maurer and M~Pontil.
\newblock Empirical bernstein bounds and sample variance penalization.
\newblock In {\em COLT 2009-The 22nd Conference on Learning Theory}, 2009.

\bibitem{namkoong2017variance}
Hongseok Namkoong and John~C Duchi.
\newblock Variance-based regularization with convex objectives.
\newblock In {\em Advances in Neural Information Processing Systems}, pages
  2975--2984, 2017.

\bibitem{vapnik1971uniform}
VN~Vapnik and A~Ya Chervonenkis.
\newblock On the uniform convergence of relative frequencies of events to their
  probabilities.
\newblock {\em Theory of Probability and its Applications}, 16(2):264, 1971.

\bibitem{yabe2019empirical}
Akihiro Yabe and Takanori Maehara.
\newblock Empirical hypothesis space reduction.
\newblock {\em arXiv preprint}, https://arxiv.org/abs/1909.01576, 2019.

\end{thebibliography}
\bibliographystyle{plain}

\normalsize
\appendix

\section{Proofs}
\subsection{Proofs for the statements in Section~\ref{secIntro}}
\begin{proof}[Proof of Fact~\ref{factConf}]
	By $\theta - \theta^* \in \lambda U_{\Sigma}$, for any $x \in \cX$ and $k=1,2,\dots,K$, it holds that
	\begin{align*}
		\min_{u \in U_{\Sigma}} g_k (x, \theta^* + 2\lambda u ) \leq \min_{u \in U_{\Sigma}} g_k (x, \theta + \lambda u ) \leq g_k (x, \theta^*). 
	\end{align*}
	The left inequality implies that $f(x(\lambda; \theta,\Sigma)) \leq f(x(2\lambda; \theta^*,\Sigma))$,
	and the right inequality implies that $f^*(x(\lambda; \theta,\Sigma)) < \infty$.
	It thus holds that $f^*(x(\lambda; \theta,\Sigma)) \leq f^*(x(2\lambda; \theta^*,\Sigma))$.
\end{proof}
\begin{proof}[Proof of Proposition~\ref{propBase}]
	Since $\hat{\theta}_n - \theta^* \in \lambda U_{\Sigma^*}$ with probability at least $1-\delta$, it holds that
	\begin{align*}
		\VaR_{\delta}[f^*(x(\lambda ; \hat{\theta}_n, \Sigma^*) )] &\leq \max_{\theta : \theta- \theta^* \in \lambda U_{\Sigma^*}} f^*(x(\lambda ; \theta, \Sigma^*) )   \\
		&\leq f^*(x^*(2 \lambda )).
	\end{align*}
	The last inequality follows from Fact~\ref{factConf}.
\end{proof}
\begin{proof}[Proof of Lemma~\ref{lemConf}]
	By the definition of $r_k$ and $S(\lambda, \cY; \Sigma)$ and linearlity of $g_k$ with respect to $\theta$, for all $y \in \cY$, it holds that
	\begin{align*}
		\min_{u \in U_{\Sigma}} g_{k}(y, \theta^* + 2\lambda u)
		&= g_{k}(y, \theta) - g_{k}(y, \theta^* - \theta) - 2 \lambda r_k(y;\Sigma)\\
		&\leq \min_{u \in U_{\Sigma}} g_k (y, \theta + \lambda u ) \\
		&= g_k(y, \theta^*) + g_k(y, \theta - \theta^*)  - \lambda r_k (y; \Sigma)\\
		&\leq g_k(y, \theta^* ).
	\end{align*}
	Since $x(\lambda; \theta, \Sigma), x(2\lambda; \theta^*, \Sigma) \in \cY$,
	it holds that $f^*(x(\lambda; \theta, \Sigma)), f^*(x(2\lambda; \theta^*, \Sigma) ) < \infty$.
	By the first inequality and the optimality of $x(\lambda; \theta, \Sigma)$ over $\cX$, then,
	we have $f(x(\lambda; \theta, \Sigma)) \leq f(y)$ that satisfies $y \in \cY$ and $\min_{u \in U_{\Sigma}} g_{k}(y, \theta^* + 2\lambda u) \geq 0$.
	Thus the statement holds.
\end{proof}

\subsection{Proofs for the statements in Section~\ref{secPre}}
For each $\theta^i \sim \cP^*$ for $i=1,2,\dots,n$,
let us denote $\Delta^i = \Sigma^{*-1/2} (\theta^i - \theta^*)$ and $\Delta_n = \sum_{i=1}^n \Delta^i / \sqrt{n}$.
Then each $\Delta^i$ is distributed by $\cP_{\Delta}^{*}$.
\begin{proof}[Proof of Lemma~\ref{lemParaEst}]
	Observe that each $\Delta^i$ satisfies $E[\Delta^i ]=0$,
	$E[\Delta^i \Delta^{i \top}] = I_d$, and $E[\|\Delta^i \|^3]  \leq \tau$ by Assumption~\ref{assump}.
	Lemma~\ref{lemHighBE} implies that
	\begin{align*}
		\sup_{C \in \cC_d} \left| \underset{W \sim \cN(0,I_d)}{\Prob} (W \in C) - \underset{\Delta_n}{\Prob} (\Delta_n \in C) \right| \leq \frac{c_d \tau }{\sqrt{n}}.
	\end{align*}
	Mapping both random variables as $W \mapsto (\Sigma^* / n)^{1/2} W$ and $\Delta_n \mapsto (\Sigma^* / n)^{1/2}\Delta_n = \hat{\theta}_n$,
	we have the statement.
\end{proof}
\begin{proof}[Proof of Lemma~\ref{lemCovEst}]
	Let us define $\hat{I}_n := (1/n) \sum_{i=1}^n \Delta^i \Delta^{i \top}$.
	Since $\Sigma^{1/2} \hat{I}_n \Sigma^{1/2} = \hat{\Sigma}_n$ and 
	the relationship $O \preceq \Sigma_1 \preceq \Sigma_2 $ of positive semidefinit matrices 
	is invariant under the mapping $\Sigma_1 \mapsto \Sigma^{1/2} \Sigma_1 \Sigma^{1/2}$ by another positive semidefinite matrix $\Sigma$,
	it is enough to prove that
	\begin{align}
		\left(1-  \frac{c_d \tau}{\sqrt{n}} \right) I_d
		\preceq \hat{I}_n
		\preceq \left( 1+ \frac{c_d \tau}{\sqrt{n}} \right) I_d.
	\end{align}
	
	For each $i=1,2,\dots,n$ and $j, k=1,2,\dots,d$, let us define $\sigma^{i}_{jk} = \Delta^i_{j} \Delta^i_{k} $.
	Then $E[\sigma^{i}_{jj}] = 1$, $\sigma^{*2}_{jj} =  E[\sigma^{i2}_{jj}] \leq \tau^{'2/3}$ by H\"older's inequality,
	and $E[\sigma^{i3}_{jj}] \leq \tau'$.
	Observe that $\hat{I}_{n,jj} = (1/n)(\sum_{i=1}^n \sigma^{i}_{jj} )  - 1$
	Thus, by Lemma~\ref{lemHighBE}, it holds that
	\begin{align*}
		\sup_{\lambda \geq 0} \left| 
		\underset{W \sim \cN(0,\sigma^{*2}_{jj} / n )}{\Prob} (W \in [-\lambda, \lambda])   
		- \underset{\hat{I}_n}{\Prob} ( \hat{I}_{n,jj} \in [-\lambda +1, \lambda +1]  )
		\right| 
		\leq \frac{c_1 \tau'}{\sqrt{n}}.
	\end{align*}
	Since $\Prob_{W \sim \cN(0,\sigma^{*2}_{jj} )} (W \in [-\lambda, \lambda] ) = \chi_{1} (\lambda / \sigma^{*}_{jj} )$,
	the following holds with probability $1 - \delta'/d^2$, it holds that
	\begin{align*}
		|\hat{I}_{n,jj} - 1| \leq \frac{\tau^{'1/3}}{\sqrt{n}} \chi_1^{-1}\left( 1 - \frac{\delta'}{d^2}  + \frac{c_1 \tau'}{ \sqrt{n}}  \right) = \frac{\eta_{d,n}}{d}.
	\end{align*}
	Also, for $j\neq k$, it holds that $E[\sigma^{i}_{jk}] = 0$, $\sigma^{*2}_{jk} := E[\sigma^{i2}_{jk}]  \leq \tau^{'2/3}$ by H\"older's inequality,
	and $E[\sigma^{i3}_{jk}] \leq \tau'$ also by H\"older's inequality.
	Thus, by Lemma~\ref{lemHighBE}, it holds that
	\begin{align*}
		\sup_{\lambda \geq 0} \left| \underset{W \sim \cN(0,\sigma^{*2}_{jk} / n )}{\Prob} (W \in [-\lambda, \lambda]) 
		- \underset{\hat{I}_n}{\Prob} \left( I_{n,jk} \in [-\lambda , \lambda]  \right) \right| 
		\leq \frac{c_1 \tau'}{\sqrt{n}}.
	\end{align*}
	The following holds with probability $1 - \delta'/d^2$, it holds that
	\begin{align*}
		|\hat{I}_{n,jk}| \leq \eta_{d,n}/ d.
	\end{align*}
	Let us define 
	By the union bound, then, $|\hat{I}_n{jk} -  I_{d, jk}| \leq \eta_{d,n}/ d$ for all $j,k=1,2,\dots,d$ with probability at least $1 - \delta'$.
	
	Suppose that  $|\hat{I}_{n,jk} -  I_{d, jk}| \leq \eta$ for all $j,k=1,2,\dots,d$.
	By Lemma~\ref{lemGersh}, then, it holds that
	\begin{align*}
		\left( 1 -  \eta_{d,n} \right) I_{d } \preceq \hat{I}_n \preceq \left( 1 +  \eta_{d,n} \right) I_{d }.
	\end{align*}
	Thus the statement holds.
\end{proof}

\subsection{Proofs for the statements in Section~\ref{secSample}}
\begin{proof}[Proof of Lemma~\ref{lemSConvex}]
	Let us define $S^+_k$ and $S^-_k$ for $k=1,2,\dots,K$ by
	\begin{align}
		S^+_k(\lambda, \cY; \Sigma) := \{\Delta \in \bR^d  \mid \forall y \in \cY,  g_k (y, - \Delta) \leq \lambda r_k(y; \Sigma) \}, \label{dfS+k} \\
		S^-_k(\lambda, \cY; \Sigma) := \{\Delta \in \bR^d  \mid \forall y \in \cY, g_k (y, \Delta) \leq \lambda r_k(y; \Sigma) \}. \label{dfS-k}
	\end{align}
	Let us denote $S^+_k = S^+_k(\lambda, \cY; \Sigma)$ and $S^-_k = S^-_k(\lambda, \cY; \Sigma)$.
	It then holds that $S(\lambda, \cY; \Sigma) = \bigcap_{k=1}^K (S^+_k \cap S^-_k)$.
	We then show that $S^+_k$ and $S^-_k$ are convex sets for all $k=1,2,\dots, K$.
	
	Observe that,
	\begin{align*}
		\Delta \in S^+_k &\Leftrightarrow \forall y \in \cY, g_k (y, - \Delta) \leq \lambda r_k(y; \Sigma)\\
		&\Leftrightarrow \forall y \min_{u \in U_{\Sigma}} g_k (y, -\Delta + \lambda u) \leq 0,\\
		&\Leftrightarrow \forall y ,\exists u' \in \lambda U_{\Sigma}, g_k (y, -\Delta + u') \leq 0
	\end{align*}
	For each $\Delta \in S^+_k$ and $y \in \cY$, then, let $u^+_{k}(\Delta, y)$ denote an element of $\lambda U_{\Sigma}$ that satisfy $g_k (y, -\Delta + u^+_{k}(\Delta, y)) \leq 0$.
	Suppose that $\Delta_1, \Delta_2 \in S^+_k$, and we will prove that $\alpha \Delta_1 + (1 - \alpha) \Delta_2 \in S^+_k$ for any $\alpha \in [0,1]$.
	For any $y \in \cY$,  let $u' := \alpha u^+_{k}(\Delta_1, y) + (1 - \alpha) u^+_{k}(\Delta_2, y)$, which $u'$ is included by $\lambda U_{\Sigma}$ by the convexity of $U_{\Sigma}$.
	Then, by the linearlity of $g_k$ with respect to $\theta$, we have
	$g_k (y, -(\alpha \Delta_1 + (1 - \alpha) \Delta_2) + u') = \alpha g_k (y, - \Delta_1 + u^+_{k}(\Delta_1, y)) + (1-\alpha) g_k (y, - \Delta_2 + u^+_{k}(\Delta_2, y)) \leq 0$, and thus $\alpha \Delta_1 + (1 - \alpha) \Delta_2 \in S^+_k$ and $S^+_k$ is convex set.
	The same line of discussion shows that $S^-_k$ is also a convex set, and thus the first half of the statement holds.
	
	Assume in addition that $g_k(x,\theta)$ is linear with respect to $x$ for all $k=1,2,\dots,K$ and $\cY$ is a convex set.
	By Sion's minimax theorem, then, it holds that
	\begin{align*}
		\Delta \in S^+_k &\Leftrightarrow \forall y \in \cY, g_k (y, - \Delta) \leq \lambda r_k(y; \Sigma)\\
		&\Leftrightarrow \max_{y \in \cY} \min_{u' \in \lambda U_{\Sigma}}  g_k (y, -\Delta + u') \leq 0\\
		&\Leftrightarrow \min_{u' \in \lambda U_{\Sigma}} \max_{y \in \cY} g_k (y, -\Delta + u') \leq 0\\
		&\Leftrightarrow \exists u' \in \lambda U_{\Sigma},  \Delta + u' \in C^+_k \\
		&\Leftrightarrow  \Delta \in \left(\lambda U_{\Sigma} +  C^+_k \right).
	\end{align*}
	It then also holds that $\Delta \in S^-_k$ if and only if $\Delta \in \left(\lambda U_{\Sigma} +  C^-_k \right)$.
	The second half of the statement thus holds, and the proof is complete.
\end{proof}

\begin{proof}[Proof of Lemma~\ref{lemMonotonicity}]
	(i) This holds since $S(\lambda_1, \cY; \Sigma ) \subseteq S(\lambda_2, \cY; \Sigma) )$ for any $0 \leq \lambda_1 \leq \lambda_2$.
	
	(ii) This holds since $S(\lambda, \cY; \Sigma ) \supseteq S(\lambda, \cZ; \Sigma) )$ for any $\cY \subseteq \cZ \subseteq \cX$.
	
	(iii) This holds since $S(\lambda, \cY; \Sigma_1 ) \subseteq S(\lambda, \cY; \Sigma_2 )$ for $O \preceq  \Sigma_1 \preceq \Sigma_2$. 
	
	(iv) Recall the definitions \eqref{dfS+k} and \eqref{dfS-k} of $S^+_k(\lambda, \cY; \Sigma)$ and $S^-_k(\lambda, \cY; \Sigma)$.
	We first prove that $S^+_k(\kappa_1 \lambda, \cY; \kappa_2 \Sigma) = \kappa_1 \sqrt{\kappa_2} S^+_k(\lambda, \cY; \Sigma)$ for $\kappa_1, \kappa_2 > 0$.
	By the definition, we have
	\begin{align*}
		&S^+_k( \kappa_1 \lambda, \cY; \kappa_2 \Sigma)\\ 
		&= \{\Delta \in \bR^d  \mid \forall y \in \cY,  g_k (y, - \Delta) \leq \kappa_1 \lambda r_k(y; \kappa_2 \Sigma) \},\\
		&= \{\Delta \in \bR^d  \mid \forall y \in \cY,  g_k (y, - \Delta) \leq \kappa_1 \sqrt{\kappa_2} \lambda  r_k(y; \Sigma)\\
		&= \{\Delta \in \bR^d  \mid \forall y \in \cY,  g_k (y, - \Delta / \kappa_1 \sqrt{\kappa_2}   ) \leq \lambda r_k(y; \Sigma)\\
		&= \kappa_1 \sqrt{\kappa_2}  S^+_k(\lambda, \cY; \nu \Sigma).
	\end{align*}
	Similarly, it holds that $S^-_k(\kappa_1 \lambda, \cY; \kappa_2 \Sigma) = \kappa_1 \sqrt{\kappa_2} S^-_k(\lambda, \cY; \Sigma)$.
	Since $S(\lambda, \cY; \Sigma) = \bigcap_{k=1}^K (S^+_k \cap S^-_k)$,
	these then imply that $S_k(\kappa_1 \lambda, \cY; \kappa_2 \Sigma) = \kappa_1 \sqrt{\kappa_2} S_k(\lambda, \cY; \Sigma)$.
	It then holds that
	\begin{align*}
		&\mu(p, \cY; \cN_{\nu_1 \Sigma}, \nu_2 \Sigma) \\
		&=  \inf \left\{ \mu \geq 0 \mid  \Prob_{\theta \sim \cN_{\nu_1 \Sigma}} \left(\theta \in  S(\mu, \cY; \nu_2 \Sigma) \right)   \geq p \right\} \\
		&=  \inf \left\{ \mu \geq 0 \mid  \Prob_{\theta \sim \cN_{\Sigma}} \left(\sqrt{\nu_1 } \theta \in \sqrt{\nu_2}  S(\mu, \cY; \Sigma) \right)   \geq p \right\} \\
		&=  \inf \left\{ \mu \geq 0 \mid  \Prob_{\theta \sim \cN_{\Sigma}} \left( \theta \in \sqrt{\nu_2 / \nu_1}  S(\mu, \cY; \Sigma) \right)   \geq p \right\} \\
		&=  \inf \left\{ \mu \geq 0 \mid  \Prob_{\theta \sim \cN_{\Sigma}} \left( \theta \in   S( \sqrt{\nu_2 / \nu_1} \mu, \cY; \Sigma) \right)   \geq p \right\} \\
		&= \sqrt{\nu_1 / \nu_2} \mu(p, \cY; \cN_{\Sigma}, \Sigma).
	\end{align*}
	The statement thus holds.
\end{proof}

\begin{proof}[Proof of Proposition~\ref{propNormalApprox}]
	By Lemma~\ref{lemParaEst}, it holds that
	\begin{align*}
		&\mu(p, \cY; \cP^*_n, \Sigma^*) \\
		&= \inf \left\{ \mu \geq 0 \left| \underset{\theta \sim \cP^*_n}{\Prob} \left(\theta \in  S(\mu, \cY; \Sigma) \right)   \geq p\right\}\right. \\
		&\leq \inf \left\{ \mu \geq 0 \left|  \underset{\theta \sim \cN_{\Sigma^*/n}}{\Prob} \left(\theta \in  S(\mu, \cY; \Sigma) \right)   \geq p + \frac{c_d \tau}{\sqrt{n}} \right\}\right. \\
		&= \mu \left( p  + \frac{c_d \tau}{\sqrt{n}} , \cY; \cN_{\Sigma^*/n}, \Sigma^* \right).
	\end{align*}
	Thus the second inequality holds. The first inequality follows from the same line of discussion. 
\end{proof}

\begin{proof}[Proof of Lemma~\ref{lemAlgo}]
	(i) For the left inequality, let up pick arbitrary $y \in \cY$, and then it holds that
	\begin{align*}
		\mu'(p) &= \inf \left\{ \mu \geq 0 \left| \underset{\theta \sim \cN_{\Sigma}}{\Prob} \left(\theta \in  S(\mu, \cY; \Sigma) \right)   \geq p \right\} \right. \\
		&\geq  \inf \left\{ \mu \geq 0 \left| \underset{\theta \sim \cN_{\Sigma}}{\Prob} \left(|g_1(y, \theta) | \leq  \mu r_1(y; \Sigma) \right)   \geq p \right\} \right. \\
		&= \chi_1^{-1}(p).
	\end{align*}
	For the right inequality, since $\mu U_{\Sigma} \subseteq  S(\mu, \cY; \Sigma)$ for any $\cY \subseteq \cX$ and $\Sigma$.
	It then holds that 
	\begin{align*}
		\mu'(p) &\leq \left\{ \mu \geq 0  \left| \underset{\theta \sim \cN_{\Sigma}}{\Prob} \left(\theta \in  \mu U_{\Sigma} \right)   \geq p \right\} \right. 
		= \chi_d^{-1}(p)
	\end{align*}
	
	(ii) This statement directly follows from the definition of $\mu(p, \cY; \cN_{\Sigma}, \Sigma)$ and $S(\lambda, \cY; \Sigma)$.
\end{proof}

\begin{proof}[Proof of Lemma~\ref{lemInS}]
	This statement directly follows from the definition of $S(\lambda, \cY; \Sigma)$.
\end{proof}

\begin{proof}[Proof of Proposition~\ref{propSampleAlgo}]
	Let us define $\dot{\mu}(p)$ by
	\begin{align*}
		\dot{\mu}(p) := \inf \left\{ \mu \geq 0 \left|  \ | \{ i \mid \psi(\mu, \varepsilon_i) \geq 0  \} | \geq p Q \right\} \right. .
	\end{align*}
	Then the output $\overline{\lambda}$ satisfies 
	\begin{align}
		\dot{\mu}(p + \beta/2) \leq  \overline{\lambda} \leq \dot{\mu}(p + \beta/2) + \gamma. \label{ineqPropSample}
	\end{align}
	By Lemma~\ref{lemHoeffding}, then, the following inequalities holds with probaiblity at least $1 - \alpha/2$:
	\begin{align*}
		\underset{\theta \sim \cN_{\Sigma}}{\Prob} \left(\theta \in  S(\dot{\mu}(p + \beta/2), \cY; \Sigma) \right) \geq (p + \beta/2) - \beta/2 = p,
	\end{align*}
	which shows that $\mu'(p) \leq \dot{\mu}(p + \beta/2)$.
	Similarly, with probability at least $1 - \alpha/2$, it holds that $\dot{\mu}(p + \beta/2) \leq \mu'(p + \beta)$.
	By \eqref{ineqPropSample} and the union bound, then, the following holds with probability at least $1 - \alpha$:
	\begin{align*}
		\mu'(p) \leq  \dot{\mu} \leq \mu'(p + \beta) + \gamma.
	\end{align*}
	The statement thus holds.
\end{proof}

\subsection{Proofs for the statements in Section~\ref{secReduction}}
\begin{proof}[Proof of Theorem~\ref{thmFormal}]
	Let $\varepsilon_1 = d^2 c_1 \tau' / \sqrt{n}$.
	By Lemma~\ref{lemCovEst}, with probability at least $1 -  2 \varepsilon_1$,
	it holds that 
	\begin{align}
		\left( 1 - \eta_n  \right) \Sigma^*    \preceq \hat{\Sigma}_n  \preceq \left( 1 + \eta_n \right) \Sigma^* . \label{ineqThmSigma}
	\end{align}
	
	Suppose that \eqref{ineqThmSigma} holds.
	Let us define $\varepsilon_2 := c_d \tau /\sqrt{n}$.
	By Proposition~\ref{propSampleAlgo},
	$\hat{\mu}$ satisfy the following with probability at least $1 - \delta/ \sqrt{n}$:
	\begin{align}
		\mu(1 - 2\varepsilon_2, \cX, \cN_{\hat{\Sigma}_n}, \hat{\Sigma}_n)
		\leq \dot{\mu}_1
		\leq \mu(1 - \varepsilon_2, \cX, \cN_{\hat{\Sigma}_n }, \hat{\Sigma}_n ) + \frac{1}{\sqrt{n}} \label{ineqThmDotMu}
	\end{align}
	Then it holds that
	\begin{align}
		&\underline{\mu}_n := \mu(1 - 3\varepsilon_2, \cX, \cP^*_n, \Sigma^*) \nonumber \\
		&\leq \underline{\mu}_n' := \frac{1}{\sqrt{n}} \mu(1 - 2\varepsilon_2, \cX, \cN_{\Sigma^*}, \Sigma^*)\nonumber  \\
		&\leq \frac{1}{\sqrt{n}} \mu\left(1 - 2\varepsilon_2, \cX, \cN_{\hat{\Sigma}_n / (1 - \eta_n)}, \hat{\Sigma}_n / (1 + \eta_n) \right)\nonumber  \\
		&= \frac{1 + \eta_n}{ (1 - \eta_n)\sqrt{n}}  \mu(1 - 2\varepsilon_2, \cX, \cN_{\hat{\Sigma}_n}, \hat{\Sigma}_n )\nonumber  \\
		&\leq  \frac{1 + \eta_n}{ (1 - \eta_n)\sqrt{n}} \dot{\mu}_1   \nonumber  \\ 
		&=  \hat{\mu}\nonumber \\
		&\leq \overline{\mu}_n := \frac{(1 + \eta_n)^2}{( 1 - \eta_n)^2 \sqrt{n}} \mu(1 - \varepsilon_2, \cX, \cN_{\Sigma^*}, \Sigma^*)  + \frac{1 + \eta_n}{ (1 - \eta_n) n } \nonumber \\
		&\leq \frac{(1 + \eta_n)^2}{( 1 - \eta_n)^2 \sqrt{n}}  \chi_{d}^{-1} \left(1 - \varepsilon_2 \right)  + \frac{1 + \eta_n}{ (1 - \eta_n) n } . \label{inequThmMu}
	\end{align}
	The first inequality follows from Proposition~\ref{propNormalApprox}.
	The second inequality follows from \eqref{ineqThmSigma} and Lemma~\ref{lemMonotonicity} (iii).
	The first equality follows from Lemma~\ref{lemMonotonicity} (iv).
	The third inequality follows from \eqref{ineqThmDotMu}.
	The forth inequality follows from \eqref{ineqThmSigma}, Lemma~\ref{lemMonotonicity} (iii) and (iv), and \eqref{ineqThmDotMu}.
	The fifth inequality follows from Lemma~\ref{lemAlgo} (i).
	
	Suppose that \eqref{ineqThmSigma} and \eqref{inequThmMu} hold.
	By the definition of the minimum spatial uniform bound,
	the following holds with probability at least $1 - 3\varepsilon_2 $:
	\begin{align}
		\hat{\theta}_n - \theta^* \in S(\underline{\mu}_n, \cX; \Sigma^*) &\subseteq S(\underline{\mu}_n', \cX; \Sigma^*)\\
		&\subseteq S(\hat{\mu}, \cX; \hat{\Sigma}_n / (1 - \eta_n)). \label{ineqThmHatTheta1}
	\end{align}
	Observe that $\hat{x}(3 \hat{\mu} /(1 - \eta_n)  ) = x( 3 \hat{\mu}, \hat{\theta}_n, \hat{\Sigma}_n / (1 - \eta_n) )$.
	By Lemma~\ref{lemConf2} with $\cY = \cX$, \eqref{ineqThmSigma}, and \eqref{inequThmMu}, it holds that
	\begin{align*}
		\underline{w}_n &:= f\left( x^*(2\underline{\mu}_n') \right) \\
		&\leq f\left( x(3\underline{\mu}_n'; \hat{\theta}_n, \Sigma^*) \right) \\
		&\leq \hat{w} = f\left(x( 3 \hat{\mu}, \hat{\theta}_n, \hat{\Sigma}_n / (1 - \eta_n) ) \right)  \\
		&\leq f\left(x( 4\hat{\mu}, \theta^*, \hat{\Sigma}_n / (1 - \eta_n) ) \right)  \\
		&\leq f\left(x( 4\hat{\mu}, \theta^*, (1 + \eta_n) \Sigma^* / (1 - \eta_n) ) \right)  \\
		&\leq \overline{w}_n :=  f \left(x^* \left( \frac{4 (1 + \eta_n)  \overline{\mu}}{ (1 - \eta_n) } \right) \right) .
	\end{align*}
	Observe that, by \eqref{ineqThmSigma} and \eqref{ineqThmHatTheta1}, for any $w'$ and $\mu' \geq 0$, it holds that
	\begin{align*}
		\cY^*(w', \mu') &\subseteq \hat{\cY}\left(w', \frac{1}{1 - \eta_n} (\mu' + \underline{\mu}_n) \right) \\
		&\subseteq \cY^*\left(w', \frac{1+ \eta_n}{1 - \eta_n} (\mu' + 2\underline{\mu}_n) \right).
	\end{align*}
	It then holds that
	\begin{align*}
		&\underline{\cY}_n := \cY^*(\underline{w}_n, 0) \\
		&\subseteq \hat{\cY} = \hat{\cY}\left(\hat{w}, \frac{1}{1 - \eta_n}  \hat{\mu} \right) \\
		&\subseteq \overline{\cY}_n := \cY^*\left(\overline{w}_n, \frac{2(1+ \eta_n)}{1 - \eta_n}\overline{\mu}_n \right).
	\end{align*}
	
	Suppose that \eqref{ineqThmSigma}, \eqref{inequThmMu} and \eqref{ineqThmHatTheta1} holds.
	Recall that $\delta' = 2 \varepsilon_1 +  \delta/\sqrt{n} + 4\varepsilon_2$.
	By \eqref{ineqThmSigma}, Proposition~\ref{propNormalApprox}, and Lemma~\ref{lemAlgo} (i), 
	it holds that
	\begin{align}
		&\underline{\lambda}_n :=  \mu(1 - \delta + \delta' - \varepsilon_2, \underline{\cY}_n, \cP^*_n, \Sigma^*)\\
		&\leq  \frac{1}{\sqrt{n}} \mu(1 - \delta + \delta', \hat{\cY}, \cN_{\Sigma^*}, \Sigma^*)\nonumber  \\
		&\leq  \frac{1 + \eta_n}{(1 - \eta_n)\sqrt{n}} \mu(1 - \delta + \delta', \hat{\cY}, \cN_{\hat{\Sigma}_n}, \hat{\Sigma})  \nonumber\\
		&\leq  \frac{1 + \eta_n}{(1 - \eta_n)\sqrt{n}} \dot{\lambda} \nonumber\\
		&= \hat{\lambda} \nonumber \\
		&\leq \overline{\lambda}_n := \frac{(1 + \eta_n)^2}{( 1 - \eta_n)^2 \sqrt{n}} \mu(1 - \delta + \delta' + \varepsilon_2, \overline{\cY}_n, \cN_{\Sigma^*}, \Sigma^*)  \nonumber \\
		&\qquad \qquad + \frac{1 + \eta_n}{ (1 - \eta_n) n }. \label{inequThmLmd}
	\end{align}
	By the definition of the minimum spatial uniform bound,
	with probability at least $1 - \delta + \delta'$, it holds that
	\begin{align}
		\hat{\theta}_n - \theta^* \in S(\underline{\lambda}_n, \underline{\cY}_n; \Sigma^*). \label{ineqThmHatTheta2}
	\end{align}
	
	By the union bound, 
	\eqref{ineqThmSigma}, \eqref{inequThmMu}, \eqref{ineqThmHatTheta1}, and \eqref{ineqThmHatTheta2} hold with probability at least $1-\delta$.
	Since $\hat{\lambda} \geq 0$, it is immediate that
	$\hat{x}(\hat{\lambda}) \in \hat{\cY}(\infty, \hat{\mu})$.
	Since $n \geq n_0$, the following inequalities hold.
	\begin{align*}
		\hat{\lambda} &\leq \frac{(1 + \eta_n)^2}{( 1 - \eta_n)^2 \sqrt{n}} \mu(1 - \delta + \delta' + \varepsilon_2, \overline{\cY}_n, \cN_{\Sigma^*}, \Sigma^*)  + \frac{1 + \eta_n}{ (1 - \eta_n) n } \\
		&\leq \frac{(1 + \eta_n)^2}{( 1 - \eta_n)^2 \sqrt{n}} \mu(1 - \delta + \delta' + \varepsilon_2, \cX, \cN_{\Sigma^*}, \Sigma^*)  + \frac{1 + \eta_n}{ (1 - \eta_n) n } \\
		&\leq \frac{1}{\sqrt{n}} \mu(1 - 2\varepsilon_2, \cX, \cN_{\Sigma^*}, \Sigma^*)\\
		&\leq \underline{\mu}_n'
	\end{align*}
	It then holds that
	\begin{align*}
		f(\hat{x}(\hat{\lambda})) \leq \underline{w}_n.
	\end{align*}
	Therefore it holds that $\hat{x}(\hat{\lambda}) \in \underline{\cY}_n$.
	Also, since $2 \hat{\lambda} \leq 2 \underline{\mu}_n'$ imply
	$f(x^*(2 \hat{\lambda}) ) \leq f(x^*(2\underline{\mu}_n' ))$
	\begin{align}
		f^*(\hat{x}(\hat{\lambda}) ) \leq f(x(2\hat{\lambda}; \theta^*, \hat{\Sigma}_n)) 
		\leq f^*(x^*(2\hat{\lambda} / (1 - \eta_n))) \leq \underline{w}_n, \label{ineqThmFinal}
	\end{align}
	and thus $\hat{x}(\hat{\lambda}) \in \underline{\cY}_n$.
	Therefore, by Lemma~\ref{lemConf}, it holds that
	\begin{align*}
		f^*(\hat{x}(\hat{\lambda})) \leq f^*(x(2 \hat{\lambda}; \theta^* , \hat{\Sigma}_n )) 
		\leq f^*(x^*(2 \hat{\lambda}/ (1 - \eta_n))) \leq f^*(x^*(2 \overline{\lambda}_n/ (1 - \eta_n)))
	\end{align*}
	In summary, with probability $1-\delta$, \eqref{ineqThmFinal} holds.
	
	Observe that $\lim_{n \to \infty} \overline{\cY}_n = \{x^*(0)\}$ by the definition of $\overline{\cY}_n$ 
	and the assumption $\lim_{\varepsilon \to +0} \cY^*(\varepsilon, \varepsilon) = \{x^*(0) \}$.
	By Lemma~\ref{lemMonotonicity} (i) and (ii), it holds that
	\begin{align*}
		\lim_{n \to \infty } \sqrt{n} \overline{\lambda}_n 
		&=\lim_{n \to \infty} \mu(1 - \delta+ \delta'_n, \overline{\cY}_n ; \cN_{\Sigma^*}, \Sigma^*)\\
		&=\mu(1- \delta, \{x^*(0) \}; \cN_{\Sigma^*}, \Sigma^*)
		\leq \chi_{1}^{-1}(1 - \delta/K).
	\end{align*}
	The last inequality follows from the union bound,
	and thus the proof is complete.
\end{proof}

\begin{proof}[Proof of Lemma~\ref{lemConf2}]
	By the definition of $r_k$, $S(\lambda, \cY; \Sigma)$, and linearlity of $g_k$ with respect to $\theta$, for all $y \in \cY$ and $k=1,2,\dots,K$, it holds that
	\begin{align*}
		&\min_{u \in U_{\Sigma}} g_k (y, \theta^* + (\kappa+1)\lambda u ) = g_k(y, \theta^*) - (\kappa+1)\lambda r_k (y; \Sigma) \\
		&= g_k(y, \theta) + g_k(y, \theta^* - \theta) - (\kappa+1)\lambda r_k (y; \Sigma) \\
		&\leq g_k(y, \theta) - \kappa \lambda r_k (y; \Sigma) = \min_{u \in U_{\Sigma}} g_k (y, \theta^* + \kappa\lambda u ) \\
		&= g_k(y, \theta^*) + g_k(y, \theta - \theta^*)  - \kappa\lambda r_k (y; \Sigma)\\
		&\leq \min_{u \in U_{\Sigma}} g_k(y, \theta^* + (\kappa-1)\lambda u) \leq  g_k(y, \theta^* ).
	\end{align*}
	Since $x((\kappa-1)\lambda; \theta^*, \Sigma), x(\kappa \lambda; \theta, \Sigma), x((\kappa + 1)\lambda; \theta^*, \Sigma) \in \cY$,
	the inequalities above indicates that $f^*(x((\kappa-1)\lambda; \theta^*, \Sigma)) \leq f^*(x(\kappa\lambda; \theta,\Sigma)) \leq f^*(x((\kappa+1)\lambda; \theta^*,\Sigma))$.
\end{proof}

\end{document}